\newtheorem{thm}{Theorem}
\newtheorem{lem}[thm]{Lemma}
\newtheorem{cor}[thm]{Corollary}
\theoremstyle{definition}
\newtheorem{defn}[thm]{Definition}
\newtheorem{rmk}[thm]{Remark}
\newtheorem{exmp}[thm]{Example}
\newcommand{\CPb}{\overline{\mathbb{CP}}{}^{2}}
\newcommand{\CP}{{\mathbb{CP}}{}^{2}}
\newcommand{\CPS}{{\mathbb{CP}}{}^{1}}
\newcommand{\Z}{\mathbb{Z}}
\newcommand{\red}{\textcolor{red}}
\newcommand{\blue}{\textcolor{blue}}
\title[Constructing Lefschetz fibrations via Daisy Substitutions]
{Constructing Lefschetz fibrations via Daisy Substitutions} 
\begin{document}

\author{Anar Akhmedov}
\address{School of Mathematics, 
University of Minnesota, 
Minneapolis, MN, 55455, USA}
\email{akhmedov@math.umn.edu}

\author{Naoyuki Monden}
\address{Department of Engineering Science, 
Osaka Electro-Communication University, 
Hatsu-cho 18-8, Neyagawa, 572-8530, Japan}
\email{monden@isc.osakac.ac.jp}


\subjclass[2000]{Primary 57R55; Secondary 57R17}

\keywords{4-manifold, Lefschetz fibration, rational blowdown, mapping class group, daisy relation}

\begin{abstract} We construct new families of non-hyperelliptic Lefschetz fibrations by applying the daisy substitutions to the families of words $(c_1c_2 \cdots c_{2g-1}c_{2g}{c_{2g+1}}^2c_{2g}c_{2g-1} \cdots c_2c_1)^2 = 1$, $(c_1c_2 \cdots c_{2g}c_{2g+1})^{2g+2} = 1$, and $(c_1c_2 \cdots c_{2g-1}c_{2g})^{2(2g+1)} = 1$ in the mapping class group $\Gamma_{g}$ of the closed orientable surface of genus $g$, and study the sections of these Lefschetz fibrations. Furthemore, we show that the total spaces of some of these Lefschetz fibraions are irreducible exotic $4$-manifolds, and compute their Seiberg-Witten invariants. By applying the knot surgery to the family of Lefschetz fibrations obtained from the word $(c_1c_2 \cdots c_{2g}c_{2g+1})^{2g+2} = 1$ via daisy substitutions, we also construct an infinite family of pairwise non-diffeomorphic irreducible symplectic and non-symplectic $4$-manifolds homeomorphic to $(g^2 - g + 1)\CP\# (3g^{2} - g(k-3) + 2k + 3)\CPb$ for any $g \geq 3$, and $k = 2, \cdots, g+1$. 
\end{abstract}

\maketitle

\section{Introduction}

The Lefschetz fibrations are fundamental objects to study in $4$-dimensional topology. In the remarkable works \cite{D1, GS}, Simon Donaldson showed that every closed symplectic $4$-manifold admits a structure of Lefschetz pencil, which can be blown up at its base points to yield a Lefschetz fibration, and conversely, Robert Gompf showed that the total space of a genus $g$ Lefschetz fibration admits a symplectic structure, provided that the homology class of the fiber is nontrivial. Given a Lefschetz fibration over $\mathbb{S}^2$, one can associate to it a word in the mapping class group of the fiber composed solely of right-handed Dehn twists, and conversely, given such a factorization in the mapping class group, one can construct a Lefschetz fibration over $\mathbb{S}^2$ (see for example \cite{GS}). 

Recently there has been much interest in trying to understand the topological interpretation of various relations in the mapping class group. A particularly well understood case is the daisy relation, which corresponds to the symplectic operation of rational blowdown \cite{EG, EMVHM}. Another interesting problem, which is still open, is whether any Lefschetz fibration over $\mathbb{S}^2$ admits a section (see for example \cite{Sm}). Furthermore, one would like to determine how many disjoint sections the given Lefschetz fibration admits. The later problem has been studied for the standard family of hyperelliptic Lefschetz fibrations (with total spaces $\CP\#(4g+5)\CPb$) in \cite{ko, Sinem, Tan}, using the computations in the mapping class group, and such results are useful in constructing (exotic) Stein fillings \cite{ao2, AO}.       

Motivated by these results and problems, our goal in this paper is to construct new families of Lefschetz fibrations over $\mathbb{S}^{2}$ by applying the sequence of daisy substitutions and conjugations to the hyperelliptic words $(c_1c_2 \cdots c_{2g-1}c_{2g}{c_{2g+1}}^2c_{2g}c_{2g-1} \cdots c_2c_1)^2 = 1$, $(c_1c_2 \cdots c_{2g}c_{2g+1})^{2g+2} = 1$, and $(c_1c_2 \cdots c_{2g-1}c_{2g})^{2(2g+1)} = 1$ in the mapping class group of the closed orientable surface of genus $g$ for any $g \geq 3$ and study the sections of these Lefschetz fibrations (cf. Theorems \ref{4.1}-\ref{4.4}). Furthermore, we show that the total spaces of our Lefschetz fibraions given by the last two words are irreducible exotic symplectic $4$-manifolds, and compute their Seiberg-Witten invariants (cf. Theorem \ref{theorem1}). The analogues (but weaker) results for special case of $g = 2$, using the lantern substitutions only, were obtained in \cite{EG, AP}. We would like to remark that the mapping class group computations in our paper are more involved and subtle than in \cite{EG, AP}. One family of examples, obtained from the fiber sums of the Lefschetz fibrations using daisy relations, were studied in \cite{EMVHM}. However, the examples obtained in \cite{EMVHM} have larger topology, and computations of Seiberg-Witten invariants, and study of sections were not addressed in \cite{EMVHM}. Moreover, we prove non-hyperellipticity of our Lefschetz fibrations and provide some criterias for non-hyperellipticity under the daisy substitutions (cf. Theorem \ref{nonhyperelliptic}). Some of our examples can be used to produce the families of non-isomorphic Lefschetz fibrations over $\mathbb{S}^{2}$ with the same total spaces and exotic Stein fillings. We hope to return these examples in future work.        

The organization of our paper is as follows. In Sections 2 and 3 we recall the main definitions and results that will be used throughout the paper. In Section 4, we prove some technical lemmas, important in the proofs of our main theorems. In Sections 5 and 6, we construct new families of Lefschetz fibrations by applying the daisy substitutions to the words given above, study the sections and prove non-hyperellipticity of our Lefschetz fibrations (Theorems \ref{4.1}, \ref{4.2}, \ref{4.7}, \ref{4.3}, \ref{4.4}, \ref{nonhyperelliptic}). Finally, in Section 7, we prove that the total spaces of some of these Lefschetz fibrations are exotic symplectic $4$-manifolds, which we veirfy by computing their Seiberg-Witten invariants and obtain infinite family of exotic $4$-manifolds via knot surgery (Theorems \ref{theorem1}, \ref{theorem3}), and make some remarks and raise questions. We would like to remark that the main technical content of our paper is more algebraic since our proofs rely heavily on mapping class group techniques. It is possible to pursue a more geometric approach (see Example~\ref{Ex}), but such approach alone does not yield the optimal results as presented here.

\section{Mapping Class Groups} Let $\Sigma_{g}^n$ be a $2$-dimensional, compact, oriented, and connected surface of genus $g$ with $n$ boundary components. 
Let $Diff^{+}\left( \Sigma_{g}^n\right)$ be the group of all orientation-preserving self-diffeomorphisms of $\Sigma_{g}^n$ which are the identity on the boundary and $ Diff_{0}^{+}\left(\Sigma_{g}\right)$ be the subgroup of $Diff^{+}\left(\Sigma_{g}\right)$ consisting of all orientation-preserving self-diffeomorphisms that are isotopic to the identity. 
The isotopies are also assumed to fix the points on the boundary. 
\emph{The mapping class group} $\Gamma_{g}^n$ of $\Sigma_{g}^n$ is defined to be the group of isotopy classes of orientation-preserving diffeomorphisms of $\Sigma_{g}^n$, i.e.,
\[
\Gamma_{g}^n=Diff^{+}\left( \Sigma_{g}^n\right) /Diff_{0}^{+}\left(
\Sigma_{g}^n\right) .
\]
For simplicity, we write $\Sigma_g = \Sigma_g^0$ and $\Gamma_g = \Gamma_g^0$. 

The hyperelliptic mapping class group $H_{g}$ of $\Sigma_{g}$ is defined as the subgroup of $\Gamma_g$ consisting of all isotopy classes commuting with the isotopy class of the hyperelliptic involution $\iota: \Sigma_{g}\rightarrow \Sigma_{g}$. 

\begin{defn} Let $\alpha$ be a simple closed curve on $\Sigma_{g}^n$. A \emph{right handed} (or positive) \emph{Dehn twist} about $\alpha$ is a diffeomorphism of $t_{\alpha}: \Sigma_{g}^n\rightarrow \Sigma_{g}^n$ obtained by cutting the surface $\Sigma_{g}^n$ along $\alpha$ and gluing the ends back after rotating one of the ends $2\pi$ to the right. 
\end{defn}

It is well-known that the mapping class group $\Gamma_g^n$ is generated by Dehn twists. It is an elementary fact that the conjugate of a Dehn twist is again a Dehn twist: if $\phi: \Sigma_{g}^n\rightarrow \Sigma_{g}^n$ is an orientation-preserving diffeomorphism, then $\phi \circ t_\alpha \circ \phi^{-1} = t_{\phi(\alpha)}$. The following lemma is easy to verify (see \cite{I} for a proof).

\begin{lem} \label{com&braid.lem} Let $\alpha$ and $\beta$ be two simple closed curves on $\Sigma_{g}^n$. If $\alpha$ and $\beta$ are disjoint, then their corresponding Dehn twists satisfy the commutativity relation: $t_{\alpha}t_{\beta}=t_{\beta}t_{\alpha}.$ If $\alpha$ and $\beta$ transversely intersect at a single point, then their corresponding Dehn twists satisfy the braid relation: $t_{\alpha}t_{\beta}t_{\alpha}=t_{\beta}t_{\alpha}t_{\beta}.$
\end{lem}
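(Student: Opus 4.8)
The plan is to handle the two relations separately, since they rest on quite different geometric inputs. For the commutativity relation I would argue directly at the level of diffeomorphisms rather than mapping classes. Since $\alpha$ and $\beta$ are disjoint, I can choose pairwise disjoint closed annular neighborhoods $A_{\alpha} \supset \alpha$ and $A_{\beta} \supset \beta$, and realize $t_{\alpha}$ and $t_{\beta}$ by diffeomorphisms supported in $A_{\alpha}$ and $A_{\beta}$ respectively, each equal to the identity outside its annulus. Two diffeomorphisms with disjoint supports commute on the nose, so $t_{\alpha}t_{\beta} = t_{\beta}t_{\alpha}$ already in $Diff^{+}(\Sigma_g^n)$, and a fortiori in $\Gamma_g^n$.

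For the braid relation I would reduce the identity $t_{\alpha}t_{\beta}t_{\alpha} = t_{\beta}t_{\alpha}t_{\beta}$ to a purely geometric statement about curves, using the conjugation formula $\phi \circ t_{\gamma} \circ \phi^{-1} = t_{\phi(\gamma)}$ recalled above. A short rearrangement (right-multiply by $t_{\alpha}^{-1}$, then left-multiply by $t_{\beta}^{-1}$) shows that the braid relation is equivalent to
\[
t_{\alpha}t_{\beta}t_{\alpha}^{-1} = t_{\beta}^{-1}t_{\alpha}t_{\beta}.
\]
Applying the conjugation formula to each side, with $\phi = t_{\alpha}$ on the left and $\phi = t_{\beta}^{-1}$ on the right, rewrites this as
\[
t_{t_{\alpha}(\beta)} = t_{t_{\beta}^{-1}(\alpha)}.
\]
Since a Dehn twist depends only on the isotopy class of the underlying unoriented curve, it therefore suffices to prove the geometric claim that $t_{\alpha}(\beta)$ is isotopic to $t_{\beta}^{-1}(\alpha)$ whenever $\alpha$ and $\beta$ meet transversely in a single point.

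To establish this isotopy I would localize the problem. A closed regular neighborhood $N$ of $\alpha \cup \beta$ is a one-holed torus $\Sigma_1^1$, and both $t_{\alpha}$ and $t_{\beta}$ are supported in $N$; hence both $t_{\alpha}(\beta)$ and $t_{\beta}^{-1}(\alpha)$ lie in $N$, and it is enough to exhibit the isotopy there. On $\Sigma_1^1$ this is a direct and classical computation: drawing $\alpha$ and $\beta$ as the two standard generating curves of the torus meeting once, one tracks the image of $\beta$ under the right-handed twist $t_{\alpha}$ and the image of $\alpha$ under the left-handed twist $t_{\beta}^{-1}$, and checks from the explicit annular model of the twist that the two resulting curves agree up to isotopy rel $\partial N$.

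The main obstacle is precisely this last verification. The algebraic reduction is formal, but confirming that $t_{\alpha}(\beta)$ and $t_{\beta}^{-1}(\alpha)$ coincide up to isotopy requires a careful, orientation-sensitive bookkeeping of how the twist drags the curve across the single intersection point. This is cleanest to present with a figure of $\Sigma_1^1$, which is the route taken in \cite{I}; keeping the handedness conventions consistent, so that the positive twist about $\alpha$ and the negative twist about $\beta$ really produce the \emph{same} curve rather than mirror images, is the only genuinely delicate point.
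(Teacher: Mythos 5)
The paper gives no proof of this lemma at all; it simply remarks that the statement is easy to verify and cites \cite{I}, so there is no argument of the authors' to compare yours against. Your proposal is a correct and complete version of the standard proof. The commutativity half is exactly right: realizing the two twists with disjoint annular supports makes them commute already in $Diff^{+}(\Sigma_g^n)$. For the braid relation, your algebraic reduction is sound --- right- and left-multiplying does turn $t_{\alpha}t_{\beta}t_{\alpha}=t_{\beta}t_{\alpha}t_{\beta}$ into $t_{t_{\alpha}(\beta)}=t_{t_{\beta}^{-1}(\alpha)}$, and you correctly note that only the easy direction (isotopic curves give equal twists) is needed, so you avoid invoking injectivity of $\gamma\mapsto t_{\gamma}$. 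The geometric claim $t_{\alpha}(\beta)\simeq t_{\beta}^{-1}(\alpha)$ is the standard fact, equivalent after swapping the roles of the curves to the more commonly stated $t_{\alpha}t_{\beta}(\alpha)=\beta$, and it does hold with consistent right-handed conventions; a quick sanity check on the torus in homology, where a right twist about $a=(1,0)$ sends $b=(0,1)$ to $a+b$ and the inverse twist about $b$ sends $a$ to $a+b$ as well, confirms the handedness is as you claim. The only remaining work is the figure-level verification in the one-holed torus, which you have correctly identified as the sole delicate step.
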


\subsection{Daisy relation and daisy substitution}
We recall the definition of the daisy relation (see \cite{PVHM}, \cite{EMVHM}, \cite{BKM}).

\begin{defn}\label{daisy}\rm
Let $\Sigma_0^{p+2}$ denote a sphere with $p+2$ boundary components $(p\geq 2)$. Let $\delta_0, \delta_1, \delta_2,\ldots, \delta_{p+1}$ be the $p$ boundary curves of $\Sigma_0^{p+2}$ and let $x_1, x_2,\ldots, x_{p+1}$ be the interior curves as shown in Figure~\ref{daisy}. Then, we have the \textit{daisy relation of type $p$}:
\begin{align*}
t_{\delta_0}^{p-1}t_{\delta_1}t_{\delta_2}\cdots t_{\delta_{p+1}}=t_{x_1}t_{x_2}\cdots t_{x_{p+1}}.
\end{align*}
We call the following relator the \textit{daisy relator of type $p$}: 
\begin{align*}
t_{\delta_{p+1}}^{-1}\cdots t_{\delta_2}^{-1}t_{\delta_1}^{-1}t_{\delta_0}^{-p+1}t_{x_1}t_{x_2}\cdots t_{x_{p+1}} \ (=1).
\end{align*}
\end{defn}

\begin{rmk}\rm
When $p=2$, the daisy relation is commonly known as the \textit{lantern relation} (see \cite{De}, \cite{Jo}). 
\end{rmk}

\begin{figure}[ht]
\begin{center}
\includegraphics[scale=.43]{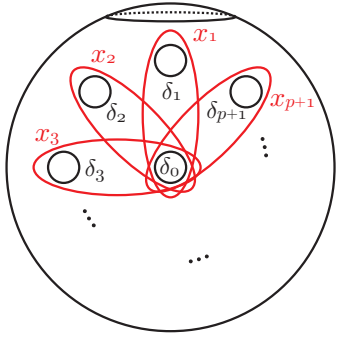}
\caption{Daisy relation}
\label{fig:hyper}
\end{center}
\end{figure}

We next introduce a daisy substitution, a substitution technique introduced by T. Fuller. 

\begin{defn}\label{daisy substitution}\rm
Let $d_1,\ldots,d_m$ and $e_1,\ldots, e_n$ be simple closed curves on $\Gamma_g^n$, and let $R$ be a product $R=t_{d_1}t_{d_2}\cdots t_{d_l}t_{e_m}^{-1}\cdots t_{e_2}^{-1}t_{e_1}^{-1}$. Suppose that $R=1$ in $\Gamma_g^n$. Let $\varrho$ be a word in $\Gamma_g^n$ including $t_{d_1}t_{d_2}\cdots t_{d_l}$ as a subword: 
\begin{align*}
\varrho=U\cdot t_{d_1}t_{d_2}\cdots t_{d_l} \cdot V, 
\end{align*}
where $U$ and $V$ are words. Thus, we obtain a new word in $\Gamma_g^n$, denoted by $\varrho^\prime$, as follows:
\begin{align*}
\varrho^\prime:&=U\cdot t_{e_1}t_{e_2}\cdots t_{e_m} \cdot V.
\end{align*}
Then, we say that $\varrho^{\prime}$ is obtained by applying a $R$-\textit{substitution} to $\varrho$. In particular, if $R$ is a daisy relator of type $p$, then we say that $\varrho^{\prime}$ is obtained by applying a \textit{daisy substitution of type $p$} to $\varrho$. 
\end{defn}

\section{Lefschetz fibrations}
\begin{defn}\label{LF}\rm
Let $X$ be a closed, oriented smooth $4$-manifold. A smooth map $f : X \rightarrow \mathbb{S}^2$ is a genus-$g$ \textit{Lefschetz fibration} if it satisfies the following condition: \\
(i) $f$ has finitely many critical values $b_1,\ldots,b_m \in S^2$, and $f$ is a smooth $\Sigma_g$-bundle over $\mathbb{S}^2-\{b_1,\ldots,b_m\}$, \\
(ii) for each $i$ $(i=1,\ldots,m)$, there exists a unique critical point $p_i$ in the \textit{singular fiber} $f^{-1}(b_i)$ such that about each $p_i$ and $b_i$ there are local complex coordinate charts agreeing with the orientations of $X$ and $\mathbb{S}^2$ on which $f$ is of the form $f(z_{1},z_{2})=z_{1}^{2}+z_{2}^{2}$, \\
(i\hspace{-.1em}i\hspace{-.1em}i) $f$ is relatively minimal (i.e. no fiber contains a $(-1)$-sphere.)
\end{defn}

Each singular fiber is obtained by collapsing a simple closed curve (the \textit{vanishing cycle}) in the regular fiber. The monodromy of the fibration around a singular fiber is given by a right handed Dehn twist along the corresponding vanishing cycle. 
For a genus-$g$ Lefschetz fibration over $\mathbb{S}^2$, the product of right handed Dehn twists $t_{v_i}$ about the vanishing cycles $v_i$, for $i = 1,\ldots, m$, gives us the global monodromy of the Lefschetz fibration, the relation $t_{v_1} t_{v_2} \cdots t_{v_m}=1$ in $\Gamma_g$. This relation is called the \textit{positive relator}. Conversely, such a positive relator defines a genus-$g$ Lefschetz fibration over $\mathbb{S}^2$ with the vanishing cycles $v_1,\ldots, v_m$.

According to theorems of Kas \cite{Kas} and Matsumoto \cite{Ma}, if $g\geq 2$, then the isomorphism class of a Lefschetz fibration is determined by a positive relator modulo simultaneous conjugations 
\begin{align*}
t_{v_1} t_{v_2} \cdots  t_{v_m} \sim t_{\phi(v_1)} t_{\phi(v_2)} \cdots t_{\phi(v_m)} \ \ {\rm for \ all} \ \phi \in M_g
\end{align*}
and elementary transformations 
\begin{align*}
&t_{v_1} \cdots t_{v_{i-1}} t_{v_i} t_{v_{i+1}} t_{v_{i+2}} \cdots t_{v_m}& &\sim& &t_{v_1} \cdots t_{v_{i-1}} t_{t_{v_i}(v_{i+1})} t_{v_i} t_{v_{i+2}} \cdots t_{v_m},&\\
&t_{v_1} \cdots t_{v_{i-2}} t_{v_{i-1}} t_{v_i} t_{v_{i+1}} \cdots t_{v_m}& &\sim& &t_{v_1} \cdots t_{v_{i-2}} t_{v_i} t_{t_{v_i}^{-1}(v_{i-1})} t_{v_{i+1}} \cdots  t_{v_m}.&
\end{align*}
Note that $\phi t_{v_i}\phi^{-1}=t_{\phi(v_i)}$. 
We denote a Lefschetz fibration associated to a positive relator $\varrho \ \in \Gamma_g$ by $f_\varrho$. 

For a Lefschetz fibration $f:X\rightarrow \mathbb{S}^2$, a map $\sigma:\mathbb{S}^2\rightarrow X$ is called a \textit{section} of $f$ if $f\circ \sigma={\rm id}_{\mathbb{S}^2}$. 
We define the self-intersection of $\sigma$ to be the self-intersection number of the homology class $[\sigma(\mathbb{S}^2)]$ in $H_2(X;\Z)$. 
Let $\delta_1,\delta_2,\ldots,\delta_n$ be $n$ boundary curves of $\Sigma_g^n$. 
If there exists a lift of a positive relator $\varrho = t_{v_1} t_{v_2} \cdots t_{v_m} = 1$ in $\Gamma_g$ to $\Gamma_g^n$ as 
\begin{align*}
t_{\tilde{v}_1} t_{\tilde{v}_2} \cdots t_{\tilde{v}_m} = t_{\delta_1} t_{\delta_2} \cdots t_{\delta_n}, 
\end{align*}
then $f_\varrho$ admits $n$ disjoint sections of self-intersection $-1$. 
Here, $t_{\tilde{v}_i}$ is a Dehn twist mapped to $t_{v_i}$ under $\Gamma_g^n \to \Gamma_g$. 
Conversely, if a genus-$g$ Lefschetz fibration admits $n$ disjoint sections of self-intersection $-1$, then we obtain such a relation in $\Gamma_g^n$. 

Next, let us recall the signature formula for hyperelliptic Lefschetz fibrations, which is due to Matsumoto and Endo. We will make use of this formula in Section \ref{new words}, where we prove that all our Lefschetz fibrations obtained via daisy substitutions are non-hyperelliptic.  

\begin{thm}[\cite{Ma1},\cite{Ma},\cite{E}]\label{sign} Let $f:X\rightarrow \mathbb{S}^2$ be a genus $g$ hyperelliptic Lefschetz fibration. Let $s_0$ and $s=\Sigma_{h=1}^{[g/2]}s_h$ be the number of non-separating and separating vanishing cycles of $f$, where $s_h$ denotes the number of separating vanishing cycles which separate the surface of genus $g$ into two surfaces, one of which has genus $h$. Then, we have the following formula for the signature 
\begin{eqnarray*}
\sigma(X)=-\frac{g+1}{2g+1}s_0+\sum_{h=1}^{[\frac{g}{2}]}\left(\frac{4h(g-h)}{2g+1}-1\right)s_{h}.
\end{eqnarray*}
\end{thm}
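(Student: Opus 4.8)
The plan is to follow the cohomological method of Meyer, Matsumoto and Endo, reducing the global signature to a sum of purely local contributions, one per singular fiber, each depending only on the topological type of its vanishing cycle. The starting point is Meyer's signature cocycle $\tau_g \in Z^2(\mathrm{Sp}(2g,\Z);\Z)$, which computes the signature of surface bundles over surfaces via Wall's non-additivity of the signature. Since $f$ is hyperelliptic, its global monodromy factors through the hyperelliptic mapping class group $H_g$, and composing with the symplectic representation $\rho\colon H_g \to \mathrm{Sp}(2g,\Z)$ on $H_1(\Sigma_g;\Z)$ produces a $2$-cocycle $\rho^*\tau_g$ on $H_g$. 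The crucial point is that this cocycle becomes a coboundary over $\mathbb{Q}$, which is what localizes the signature.

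Concretely, I would first construct the \emph{Meyer function} $\phi_g\colon H_g \to \mathbb{Q}$ as the unique homogeneous quasimorphism with $\delta\phi_g = -\rho^*\tau_g$. Existence follows because the restricted class $[\rho^*\tau_g]$ is torsion in $H^2(H_g;\Z)$ and hence admits a rational primitive; uniqueness follows from $H^1(H_g;\mathbb{Q}) = \mathrm{Hom}(H_g,\mathbb{Q}) = 0$, i.e.\ the finiteness of the abelianization of $H_g$. The defining identity $\phi_g(xy) - \phi_g(x) - \phi_g(y) = \rho^*\tau_g(x,y)$ exhibits $\phi_g$ as a quasimorphism, and homogeneous quasimorphisms are automatically invariant under conjugation; this last property is precisely what guarantees that $\phi_g(t_c)$ depends only on the topological type of the curve $c$, not on $c$ itself.

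Next I would globalize. Writing the positive relator as $t_{v_1}t_{v_2}\cdots t_{v_m}=1$ in $H_g$ and telescoping the quasimorphism identity along the product, the defect terms assemble—via Meyer's theorem applied to the bundle obtained by deleting disks about the critical values and capping off the Lefschetz singularities with their local models—into the signature of the total space, giving the clean formula
\[
\sigma(X) = \sum_{i=1}^{m} \phi_g(t_{v_i}).
\]
Thus $\sigma(X)$ is a sum of one local term per singular fiber, and grouping the $s_0$ non-separating vanishing cycles together and, for each $h$, the $s_h$ separating cycles of type $h$ reduces the theorem to evaluating $\phi_g$ on a single Dehn twist of each type.

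The main obstacle, and the computational heart of the argument, is the evaluation of these two families of local values. Here I would exploit the hyperelliptic structure directly: the involution $\iota$ exhibits $\Sigma_g$ as a double cover of $\mathbb{S}^2$ branched over $2g+2$ points, identifying a central extension of $H_g$ with a mapping class group of the punctured sphere and identifying $\rho^*\tau_g$ with a signature cocycle computable from the branched-cover data. A Dehn twist about a non-separating curve descends to a half-twist exchanging two branch points, whereas a separating twist of type $h$ descends to a twist enclosing $2h+1$ branch points; computing the resulting equivariant signatures (equivalently, the relevant eta/Maslov contributions of the cyclic branched covers) yields
\[
\phi_g(t_c) = -\frac{g+1}{2g+1} \qquad\text{and}\qquad \phi_g(t_{c_h}) = \frac{4h(g-h)}{2g+1} - 1
\]
for non-separating $c$ and separating $c_h$ of type $h$, respectively. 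Substituting these into the globalized formula and collecting terms reproduces exactly the stated expression. As an independent check one can verify the non-separating value against a closed relation in $H_g$, such as the chain relation underlying $(c_1c_2\cdots c_{2g}c_{2g+1})^{2g+2}=1$, whose associated Lefschetz fibration has a separately computable signature.
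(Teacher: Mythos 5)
The paper does not prove this theorem; it is quoted as a known result with citations to Matsumoto and Endo, so there is no internal proof to compare against. Your outline faithfully reproduces the strategy of the cited proof (Endo's): restrict Meyer's cocycle to $H_g$, use that its class is torsion and $H^1(H_g;\mathbb{Q})=0$ to get a unique rational primitive $\phi_g$, which is then a conjugation-invariant class function, telescope along the positive relator to localize the signature, and evaluate the local contributions via the hyperelliptic branched-cover picture; your sanity check against $(c_1\cdots c_{2g+1})^{2g+2}=1$ indeed pins down the non-separating value, since that fibration has $(2g+1)(2g+2)$ non-separating cycles and signature $-2(g+1)^2$. One bookkeeping caveat: the identity is really $\sigma(X)=-\sum_i\phi_g(t_{v_i})+\sum_i\sigma(N_i)$, where $N_i$ is the local model of the $i$-th singular fiber, with $\sigma(N_i)=0$ for a non-separating and $-1$ for a separating vanishing cycle; the ``$-1$'' in the coefficient of $s_h$ comes from this local model term, not from the Meyer function itself, so your $\phi_g(t_{c_h})$ is really the local signature rather than the value of the Meyer function. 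This does not affect the final formula, but it should be stated correctly if the argument is written out in full.
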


\subsection{Spinness criteria for Lefschetz fibrations}

In this subsection, we recall two Theorems, due to A. Stipsicz (\cite{St}), concerning the non-spinness and spinness of the Lefschetz fibrations over $\mathbb{D}^2$ and $\mathbb{S}^2$. We will use them to verify our familes of Lefschetz fibrations in Theorem \ref{theorem1} all are non-spin. Since Rohlin's Theorem can not be used to verify non-spinness when the signature of our Lefchetz fibrations is divisible by $16$, Stipsicz's results will be more suitable for our purpose.   

Let $f:X \rightarrow \mathbb{D}^2$ be a Lefschetz fibration over disk, and $F$ denote the generic fiber of $f$. Denote the homology classes of the vanishing cycles of the given fibration by $v_{1}, \cdots, v_{m} \in H_{1}(F; \mathbb{Z}_{2})$. 
        
\begin{thm}\label{Spin1} \cite{St}. The Lefschetz fibration $f:X \rightarrow \mathbb{D}^2$ is not spin if and only if there are $l$ vanishing cylces $v_{1}$, $\cdots$, $v_{l}$ such that $v = \sum_{i=1}^{l} v_{i}$ is also a vanishing cycle, and $l +  \sum_{1 \leq i < j \leq l} v_{i} \cdot v_{j} \equiv 0 (\textrm{mod}\ 2)$.       
\end{thm}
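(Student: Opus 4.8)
The plan is to read off the spin structures of $X$ directly from its handle decomposition as a Lefschetz fibration over the disk. Recall that $X$ is obtained from $F \times \mathbb{D}^{2}$ by attaching one $2$-handle along each vanishing cycle $v_{i} \subset F \times \{p_{i}\} \subset \partial(F \times \mathbb{D}^{2})$ with the Lefschetz framing, that is, the framing $-1$ relative to the one induced by the fiber. Since $F \times \mathbb{D}^{2}$ deformation retracts onto $F$, its spin structures are in bijection with those of $F$, which by Johnson's theorem are the $\mathbb{Z}_{2}$-quadratic forms $q : H_{1}(F; \mathbb{Z}_{2}) \to \mathbb{Z}_{2}$ refining the intersection form, i.e.\ the functions satisfying $q(x+y) = q(x) + q(y) + x \cdot y$. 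Thus $X$ is spin if and only if some such $q$ extends over every $2$-handle, and I would build the proof around deciding exactly when this is possible.

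The key geometric step is the claim that $q$ extends over the handle attached along $v_{i}$ if and only if $q(v_{i}) = 1$. To establish it I would compare the spin structure that $q$ induces on the framed circle $v_{i}$ with the unique spin structure carried by the handle $D^{2} \times D^{2}$: the latter restricts to the bounding spin structure on the attaching circle, so the extension exists precisely when the Lefschetz framing together with $q$ produces the bounding structure on $v_{i}$. Tracking how the Lefschetz framing, which differs from the fiber framing by $-1$, twists the induced spin structure, and using that $q(v_{i})$ records the failure of $v_{i}$ to bound relative to the fiber framing, yields the condition $q(v_{i}) = 1$. Pinning down this normalization is the main obstacle of the argument; as a check, the same value is singled out by the requirement that the monodromy $t_{v_{i}}$ preserve $q$, since a direct computation gives $(q \circ t_{v_{i}})(x) = q(x) + (x \cdot v_{i})(q(v_{i}) + 1)$, which equals $q(x)$ for all $x$ exactly when $q(v_{i}) = 1$.

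Granting the lemma, $X$ is spin if and only if the affine system $q(v_{i}) = 1$ ($i = 1, \dots, m$) has a solution in the space of quadratic forms, whose group of differences is the linear dual $\mathrm{Hom}(H_{1}(F; \mathbb{Z}_{2}), \mathbb{Z}_{2})$. Fixing one form $q_{0}$, solvability is obstructed only along $\mathbb{Z}_{2}$-linear relations among the vanishing cycles: for a subset $S$ with $\sum_{i \in S} v_{i} = 0$, every linear correction $\ell$ satisfies $\sum_{i \in S} \ell(v_{i}) = \ell\bigl(\sum_{i \in S} v_{i}\bigr) = 0$, so a compatible $q = q_{0} + \ell$ exists along $S$ if and only if $\sum_{i \in S}(1 + q_{0}(v_{i})) = 0$. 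Applying the quadratic identity to $\sum_{i \in S} v_{i} = 0$ gives $\sum_{i \in S} q_{0}(v_{i}) = \sum_{i < j,\ i,j \in S} v_{i} \cdot v_{j}$, so the obstruction becomes $|S| + \sum_{i<j,\ i,j \in S} v_{i} \cdot v_{j} \equiv 0 \pmod 2$. Hence $X$ is non-spin if and only if some relation $S$ satisfies $|S| + \sum_{i<j,\ i,j \in S} v_{i} \cdot v_{j} \equiv 1 \pmod 2$.

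It remains to put this in the stated form. Writing such a relation by singling out one member as $v = \sum_{i=1}^{l} v_{i}$, so that $v$ is again a vanishing cycle and $S = \{v_{1}, \dots, v_{l}, v\}$ with $|S| = l+1$, I would use that $v \cdot v = 0$ and $\sum_{i=1}^{l} v_{i} \cdot v = v \cdot v = 0$ in $H_{1}(F; \mathbb{Z}_{2})$ to cancel the boundary terms involving $v$, leaving $\sum_{i<j,\ i,j\in S} v_{i} \cdot v_{j} = \sum_{1 \le i < j \le l} v_{i} \cdot v_{j}$ while $|S| = l+1$. The non-spin condition then reads $l + \sum_{1 \le i < j \le l} v_{i} \cdot v_{j} \equiv 0 \pmod 2$, which is exactly the criterion in the statement, completing both directions of the equivalence.
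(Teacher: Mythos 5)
Your argument is correct, but note that the paper itself gives no proof of this statement: it is quoted verbatim from Stipsicz's paper \cite{St}, so there is nothing internal to compare against. Your reconstruction --- identifying spin structures on $F\times\mathbb{D}^2$ with Johnson quadratic forms, showing a form extends over the Lefschetz $(-1)$-framed $2$-handle iff $q(v_i)=1$ (cross-checked by monodromy invariance), and reducing solvability of the affine system to the parity obstruction $|S|+\sum_{i<j}v_i\cdot v_j$ over $\Z_2$-relations $S$ among the vanishing cycles --- is essentially Stipsicz's original argument, and the final bookkeeping converting a relation $S=\{v_1,\dots,v_l,v\}$ into the stated form is handled correctly, including the degenerate case $l=0$ corresponding to a separating vanishing cycle.
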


Note that the above theorem imples that if the Lefschetz fibration has the separating vanishing cycle then its total space is not spin. To see this, set $l = 0$ and take the empty sum to be $0$.    

\begin{thm}\label{Spin2} \cite{St}. The Lefschetz fibration $f:X \rightarrow \mathbb{S}^2$ is spin if and only if $X \setminus \nu(F)$ is spin and for some dual $\sigma$ of $F$ we have $\sigma^2 \equiv 0 (\textrm{mod}\ 2)$. \end{thm}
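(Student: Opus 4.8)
The plan is to reduce spinness of $X$ to the evenness of its intersection form via Wu's formula, and then to analyze that form using the decomposition coming from the fibration. First I would write $X = W \cup_{\partial} \nu(F)$, where $\nu(F) \cong F \times \mathbb{D}^2$ is a tubular neighborhood of a regular fiber and $W = X \setminus \nu(F)$ is the Lefschetz fibration over $\mathbb{D}^2$ carrying all the singular fibers. The key preliminary observation is that $\nu(F)$ is \emph{always} spin: its tangent bundle splits as $TF \oplus \underline{\R}^2$, so $w_2(\nu(F))$ is the pullback of $w_2(TF)$, and the latter is the mod $2$ Euler class of $F$, which evaluates to $\chi(F) = 2 - 2g \equiv 0 \pmod 2$. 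Hence the only possible obstructions to a spin structure on $X$ are the spinness of $W$ and the compatibility of spin structures across the gluing region $F \times \mathbb{S}^1$.

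Next I would translate this obstruction into intersection-form language. Since $X$ is a closed oriented $4$-manifold, $X$ is spin if and only if its intersection form is even; this follows from Wu's formula $x \cdot x \equiv w_2(X) \cdot x \pmod 2$ together with unimodularity over $\Z_2$ of the form on $H_2(X;\Z)$ modulo torsion. A dual $\sigma$ of $F$ is by definition a class (for instance a section) with $[\sigma] \cdot [F] = 1$, and since $[F]^2 = 0$ the classes $[F], [\sigma]$ span a rank-two summand of $H_2(X;\Z)$ on which the form is $\left(\begin{smallmatrix} 0 & 1 \\ 1 & \sigma^2 \end{smallmatrix}\right)$. Orthogonally, up to the usual bookkeeping, the remaining classes are carried by $W$, where the intersection pattern of the vanishing cycles governs evenness — precisely the content of Theorem~\ref{Spin1}.

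I would then assemble the equivalence. The $\langle [F],[\sigma]\rangle$ block is even exactly when $\sigma^2 \equiv 0 \pmod 2$, and the $W$-block is even exactly when $W = X \setminus \nu(F)$ is spin; thus $X$ is spin iff both hold, which is the assertion. The remaining point is to justify the quantifier ``for some $\sigma$'': if $\sigma'$ is another dual, then $\sigma' - \sigma = \alpha + n[F]$ with $\alpha$ supported on $W$, so $\sigma'^2 - \sigma^2 = \alpha^2 + 2\,\sigma\cdot(\alpha + n[F])$, and once $W$ is spin the term $\alpha^2$ is even; hence $\sigma^2 \bmod 2$ is independent of the chosen dual, so ``for some'' coincides with ``for all.''

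The main obstacle I anticipate is the homological bookkeeping that makes the orthogonal splitting $H_2(X) \cong \langle [F],[\sigma]\rangle \oplus (\mathrm{image\ of}\ H_2(W))$ genuinely precise: one must control torsion and the behaviour of $\pi_1$ so that the Wu-formula criterion (even form $\Leftrightarrow$ spin) actually applies, and one must check that the spin-structure extension across $F \times \mathbb{S}^1$ is faithfully recorded by the parity of $\sigma^2$ rather than by additional $H^1$-data. Verifying this compatibility — that extending the spin structure of $W$ over $\nu(F)$ imposes exactly the condition $\sigma^2 \equiv 0 \pmod 2$ and nothing more — is the technical heart of the argument.
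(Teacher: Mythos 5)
The paper does not actually prove this statement; it is quoted from Stipsicz \cite{St}. Judged on its own terms, your argument has a genuine gap at its central step. You reduce spinness of the closed $4$-manifold $X$ to evenness of its intersection form, but these are not equivalent in general: Wu's formula only identifies the image of $w_2(X)$ in $\mathrm{Hom}(H_2(X;\Z),\Z_2)$ with the evenness obstruction, and when $H_1(X;\Z)$ has $2$-torsion this image can vanish while $w_2(X)\neq 0$ (the Enriques surface is the standard example). A Lefschetz fibration need not be simply connected, so this reduction is not available. A second gap is the assertion that ``the $W$-block is even exactly when $W=X\setminus\nu(F)$ is spin'': spinness of the manifold-with-boundary $W$ means $w_2(W)=0$, which is neither the same as, nor shown to be equivalent to, evenness of the restriction of the intersection form of $X$ to the image of $H_2(W;\Z)$ (which is not unimodular in any case). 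You flag both points yourself as ``the technical heart,'' but they are exactly where the theorem lives, so the proposal as written does not close.

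The standard proof avoids the intersection form entirely and works with $w_2$ directly. From the exact sequence of the pair $(X,W)$, excision, and Lefschetz duality, the kernel of the restriction $H^2(X;\Z_2)\to H^2(W;\Z_2)$ is generated by the mod~$2$ Poincar\'e dual of $[F]$. If $W$ is spin, then $w_2(X)$ restricts to $w_2(W)=0$, so $w_2(X)=\epsilon\,\mathrm{PD}[F]$ with $\epsilon\in\{0,1\}$. Evaluating on a dual class $\sigma$ and using Wu's formula $\langle w_2(X),\sigma\rangle=\sigma\cdot\sigma$ together with $\langle \mathrm{PD}[F],\sigma\rangle=[F]\cdot\sigma=1$ gives $\epsilon\equiv\sigma^2\ (\mathrm{mod}\ 2)$; hence $w_2(X)=0$ if and only if $\sigma^2$ is even, and the same computation shows the parity of $\sigma^2$ is independent of the chosen dual, so ``for some'' equals ``for all'' without any appeal to evenness of a $W$-block. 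The converse direction (restrict a spin structure to $W$ and apply Wu to $\sigma$) is immediate. Your observation that $\nu(F)$ is always spin is correct but beside the point; the content is deciding which element of the order-two subgroup generated by $\mathrm{PD}[F]$ the class $w_2(X)$ is, and that is exactly what the pairing with $\sigma$ detects.
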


\subsection{Three familes of hyperelliptic Lefschetz fibrations}

In this subsection, we introduce three well-known familes of hyperelliptic Lefschetz fibrations, which will serve as building blocks in our construction of new Lefschetz fibrations. 
Let $c_1$, $c_2$, .... , $c_{2g}$, $c_{2g+1}$ denote the collection of simple closed curves given in Figure~\ref{fig:hyper}, and  $c_{i}$ denote the right handed Dehn twists $t_{c_i}$ along the curve $c_i$. It is well-known that the following relations hold in the mapping class group $\Gamma_g$:  

\begin{equation}
\begin{array}{l}
H(g) = (c_1c_2 \cdots c_{2g-1}c_{2g}{c_{2g+1}}^2c_{2g}c_{2g-1} \cdots c_2c_1)^2 = 1,  \\
I(g) = (c_1c_2 \cdots c_{2g}c_{2g+1})^{2g+2} = 1,  \\ 
G(g) = (c_1c_2 \cdots c_{2g-1}c_{2g})^{2(2g+1)} = 1. 
\end{array}
\end{equation}

\begin{figure}[ht]
\begin{center}
\includegraphics[scale=.40]{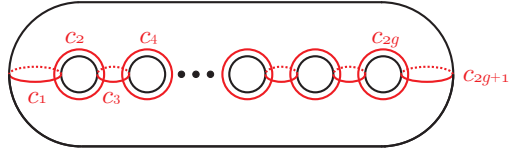}
\caption{Vanishing Cycles of the Genus $g$ Lefschetz Fibration on $X(g)$, $Y(g)$, and $Z(g)$}
\label{fig:hyper}
\end{center}
\end{figure}

Let $X(g)$, $Y(g)$ and $Z(g)$ denote the total spaces of the above genus $g$ hyperelliptic Lefschetz fibrations given by the monodromies $H(g) = 1$, $I(g) = 1$, and $J(g) = 1$ respectively, in the mapping class group $\Gamma_g$. For the first monodromy relation, the corresponding genus $g$ Lefschetz fibrations over $\mathbb{S}^2$ has total space $X(g) = \CP\#(4g+5)\CPb$, the complex projective plane blown up at $4g+5$ points. In the case of second and third relations, the total spaces of the corresponding genus $g$ Lefschetz fibrations over $\mathbb{S}^2$ are also well-known families of complex surfaces. For example, $Y(2) = K3\#2\CPb$ and $Z(2)$ = \emph{Horikawa surface}, respectively. In what follows, we recall the branched-cover description of the $4$-manifolds $Y(g)$ and $Z(g)$, which we will use in the proofs of our main results. The branched-cover description of $X(g)$ is well-known and we refer the reader to (\cite{GS}, Remark 7.3.5, p.257).      

\begin{lem}\label{E} The genus $g$ Lefschetz fibration on $Y(g)$ over $\mathbb{S}^2$ with the monodromy $(c_1c_2 \cdots c_{2g+1})^{2g+2} = 1$ can be obtained as the double branched covering of $\CP\#\CPb$ branched along a smooth algebraic curve $B$ in the linear system $|2(g+1)\tilde{L}|$, where $\tilde{L}$ is the proper transform of line $L$ in $\CP$ avoiding the blown-up point. Furthermore, this Lefschetz fibration admits two disjoint $-1$ sphere sections.   
\end{lem}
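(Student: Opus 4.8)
The plan is to exhibit $\CP\#\CPb$ as the Hirzebruch surface $\mathbb{F}_1$ together with its ruling $\pi\colon \CP\#\CPb \to \CPS$, the projection away from the blown-up point, whose generic fiber is the proper transform of a line \emph{through} that point and so represents the class $[\tilde{L}]-E$, where $E$ is the exceptional sphere. A smooth curve $B$ in $|2(g+1)\tilde{L}|$ is disjoint from $E$ (since $[\tilde{L}]\cdot E=0$) and meets each ruling fiber in $B\cdot([\tilde{L}]-E)=2(g+1)$ points. Because $[B]=2(g+1)[\tilde{L}]$ is even, the class $(g+1)[\tilde{L}]$ determines a double cover $\rho\colon Y\to \CP\#\CPb$ branched along $B$, and I would take the composition $\pi\circ\rho\colon Y\to\CPS$ as the candidate fibration. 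Choosing $B$ generically (smooth by Bertini, with only simple tangencies to the ruling for a generic projection point) guarantees that $\pi\circ\rho$ is a genuine genus-$g$ Lefschetz fibration.

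First I would carry out the numerical checks. By Riemann--Hurwitz the double cover of a fiber sphere branched at $2(g+1)$ points has Euler characteristic $2\cdot 2-(2g+2)=2-2g$, so the generic fiber of $\pi\circ\rho$ is $\Sigma_g$. The singular fibers occur exactly over the ruling fibers tangent to $B$, that is, over the tangent lines to $B$ from the blown-up point; their number is the class $d(d-1)=2(g+1)(2g+1)$ of the smooth degree-$d$ curve $B$ with $d=2(g+1)$. This equals $(2g+1)(2g+2)$, which is precisely the number of Dehn twists in the relator $I(g)=(c_1c_2\cdots c_{2g+1})^{2g+2}$; this is the consistency check that the two descriptions can agree.

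The heart of the argument, and the step I expect to be the main obstacle, is identifying the monodromy of $\pi\circ\rho$ with $I(g)$. For this I would invoke the braid-monodromy/Birman--Hilden dictionary for double branched covers. The generic fiber of $\pi$ meets $B$ in $d=2(g+1)$ points, and the classical computation of the braid monodromy of a smooth plane curve of degree $d$ with respect to a generic pencil of lines yields the central factorization $(\sigma_1\sigma_2\cdots\sigma_{d-1})^{d}$ into $d(d-1)$ positive half-twists in the braid group $B_d$. Lifting through the hyperelliptic double cover, where the hyperelliptic involution is the deck transformation of $\rho$, each half-twist $\sigma_i$ lifts to the Dehn twist $c_i$ about the preimage of the arc joining the $i$-th and $(i+1)$-st branch points, so that $c_1,\dots,c_{2g+1}$ is exactly the standard chain of Figure~\ref{fig:hyper} and the factorization becomes $(c_1c_2\cdots c_{2g+1})^{2g+2}=1$ in $\Gamma_g$. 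Since by the theorems of Kas and Matsumoto a genus-$g$ Lefschetz fibration with $g\ge 2$ is determined up to isomorphism by its positive relator, this identifies $\pi\circ\rho$ with $f_{I(g)}$ and hence $Y$ with $Y(g)$. The delicate points are ensuring the projection point is chosen so that all tangencies are simple (no flex-tangents or bitangents in the pencil, so that every critical point is of Lefschetz type) and tracking the lift carefully so that the spherical braid relation descends to the closed-surface relation rather than to one twisted by the involution.

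Finally, for the two sections I would analyze $\rho^{-1}(E)$. Since $E$ is disjoint from $B$ and the covering class satisfies $(g+1)[\tilde{L}]\cdot E=0$, the cover is trivial over $E$, so $\rho^{-1}(E)=E_1\sqcup E_2$ is a disjoint union of two spheres, each mapped isomorphically onto $E$. Because $E\cdot([\tilde{L}]-E)=1$, the exceptional sphere $E$ is a section of the ruling $\pi$, so each $E_i$ is a section of $\pi\circ\rho$; and because $\rho$ is an unbranched local isomorphism near each $E_i$, the normal bundle of $E_i$ is the pullback of that of $E$, giving $E_i^2=E^2=-1$. Thus $E_1$ and $E_2$ are the two disjoint $-1$-sphere sections claimed.
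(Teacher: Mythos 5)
Your proposal is correct and follows essentially the same route as the paper's proof: both identify the fiber class as $h-e_1$, compute the monodromy by lifting the braid factorization $\Delta^2=(\sigma_1\cdots\sigma_{d-1})^d$ of the degree-$d=2(g+1)$ branch curve through the hyperelliptic double cover, and obtain the two $-1$-sections as the lift of the exceptional sphere, which is disjoint from $B$. The only cosmetic difference is that the paper establishes the braid factorization by degenerating $B$ to $2(g+1)$ lines in general position (citing Moishezon--Teicher), whereas you quote the same factorization directly for a smooth curve and a generic pencil; your added consistency check on the number of singular fibers is a nice touch but not needed.
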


\begin{proof} We will follow the proof of Lemma 3.1 in \cite{Ar1}, where $g=2$ case have been considered (see also the discussion in \cite{AK}), and make necessary adjustments where needed. Let $D$ denote an algebraic curve of degree $d$ in $\CP$. We fix a generic projection map $\pi : \CP \setminus {pt} \rightarrow \mathbb{CP}^1$ such that the pole of $\pi$ does not belong to $D$. It was shown in \cite{MT} that the braid monodromy of $D$ in $\CP$ is given via a braid factorization. More specifically, the braid monodromy around the point at infinity in $\mathbb{CP}^1$, which is given by the central element $\Delta^2$ in $B_{d}$, can be written as the product of the monodromies about the critical points of $\pi$. Hence, the factorization $\Delta^2 = (\sigma_{1} \cdots \sigma_{d-1})^{d}$ holds in the braid group $B_{d}$, where $\sigma_{i}$ denotes a positive half-twist exchanging two points, and fixing the remaining $d-2$ points.

Now let us degenerate the smooth algebraic curve $B$ in $\CP\#\CPb$ into a union of $2(g+1)$ lines in a general position. By the discussion above, the braid group factorization corresponding to the configuration $B$ is given by $\Delta^2 = (\sigma_{1} \sigma_{2} \cdots \sigma_{2g} \sigma_{2g+1})^{2g+2}$. Now, by lifting this braid factorization to the mapping class group of the genus $g$ surface, we obtain that the monodromy factorization  $(c_1c_2 \cdots c_{2g+1})^{2g+2} = 1$ for the corresponding double branched covering.

Moreover, observe that a regular fiber of the given fibration is a two fold cover of a sphere in $\CP\#\CPb$ with homology class $f = h - e_{1}$ branched over $2(g+1)$ points, where $h$ denotes the hyperplane class in $\CP$. Hence, a regular fiber is a surface of genus $g$. The exceptional sphere $e_{1}$ in $\CP\#\CPb$, which intersects $f = h - e_{1}$ once positively, lifts to two disjoint $-1$ sphere sections in $Y(g)$. 

\end{proof}

The proof of the following lemma can be extracted from \cite{GS} [Ex 7.3.27, page 268]; we omit proof. 

\begin{lem}\label{E1} The double branched cover $W(g)$ of $\CP$ along a smooth algebraic curve $B$ in the linear system $|2(g+1)\tilde{L}|$ can be decomposed as the fiber sum of two copies of $\CP\#(g+1)^2\CPb$ along a a complex curve of genus equal $g(g-1)/2$. Moreover, $W(g)$ admits a genus $g$ Lefschetz pencil with two base points, and $Y(g) =  W(g)\#2\CPb$. 
\end{lem}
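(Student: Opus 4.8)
The plan is to read off both structures in the lemma from the double--cover map $\pi\colon W(g)\to\CP$, obtaining the genus $g$ Lefschetz pencil from a pencil of lines and the fiber sum from a degeneration of the branch curve. Write $n=g+1$, so that $B$ has degree $2n$, and note that a smooth plane curve of degree $n$ has genus $g(g-1)/2$. For the pencil and the equality $Y(g)=W(g)\#2\CPb$, I would fix a generic point $p\in\CP\setminus B$ and pull back the pencil of lines through $p$. A generic line $\ell$ meets $B$ transversally in $2n=2(g+1)$ points, so $\pi^{-1}(\ell)$ is the double cover of $\ell\cong\CPS$ branched at $2(g+1)$ points, a genus $g$ curve, with Lefschetz degenerations exactly when $\ell$ is tangent to $B$; since $p\notin B$ the two points of $\pi^{-1}(p)$ are disjoint base points, so this is a genus $g$ Lefschetz pencil with two base points. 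Blowing up $p$ in $\CP$ produces $\CP\#\CPb$ with $B$ now in $|2(g+1)\tilde L|$ (as $p\notin B$), whose double cover is the fibration $Y(g)$ of Lemma~\ref{E}; upstairs this is the blow up of $W(g)$ at the two points of $\pi^{-1}(p)$, whence $Y(g)=W(g)\#2\CPb$.

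For the fiber sum I would degenerate the smooth branch curve $B$ to the union $C_1\cup C_2$ of two generic members of a pencil $\mathcal P$ of degree $n=g+1$ curves. By B\'ezout, $C_1$ and $C_2$ meet transversally in the $n^2=(g+1)^2$ base points of $\mathcal P$, which are precisely the indeterminacy points of $\mathcal P\colon\CP\dashrightarrow\CPS$. Blowing them up resolves $\mathcal P$ to a genus $g(g-1)/2$ Lefschetz fibration $\CP\#(g+1)^2\CPb\to\CPS$ whose fibers are the proper transforms of members of $\mathcal P$; in particular the proper transforms $\tilde C_1,\tilde C_2$ are two disjoint smooth fibers. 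The key point is then that $W(g)$ is diffeomorphic to the double cover of $\CP\#(g+1)^2\CPb$ branched over $\tilde C_1\sqcup\tilde C_2$: smoothing the nodal branch locus $C_1\cup C_2$ replaces each of the $n^2$ nodes of the singular double cover of $\CP$ by a Milnor fiber, which agrees with its minimal resolution, and each exceptional $(-1)$--curve (meeting $\tilde C_1\cup\tilde C_2$ in two points) lifts to a $(-2)$--sphere. Matching Euler characteristics and signatures of the two sides, using $[B]=2(g+1)h$, confirms this identification.

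Finally I would prove the general fact that the double cover $\rho\colon M'\to M$ of a genus $g(g-1)/2$ Lefschetz fibration $q\colon M\to\mathbb S^2$ branched over two regular fibers $F_1,F_2$ is the fiber sum $M\#_F M$, where $F$ is a regular fiber. Choose a disk $D\subset\mathbb S^2$ containing both branch values and no critical values, so that $q^{-1}(D)\cong D\times F$ with $F_i=\{p_i\}\times F$. Over $D$ the branched cover is $(\text{double cover of }D\text{ branched at }p_1,p_2)\times F$, and since the double cover of a disk branched at two interior points is an annulus, this piece is $(\text{annulus})\times F$, a tube with two boundary components $F\times\mathbb S^1$. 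Over $\mathbb S^2\setminus D$ the cover is unbranched; a loop parallel to $\partial D$ bounds a disk in $D\times F$ meeting $F_1+F_2$ in an even number of points, so the classifying $\Z/2$ class vanishes on $q^{-1}(\mathbb S^2\setminus D)$ and the cover there is the trivial, disconnected one, namely two copies of $M\setminus\nu(F)=q^{-1}(\mathbb S^2\setminus D)$. Gluing the tube to these two copies is exactly $M\#_F M$. Taking $M=\CP\#(g+1)^2\CPb$ and $F=\tilde C_1$ then yields the asserted decomposition.

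The main obstacle is the identification in the second paragraph, that smoothing the branch curve produces the minimal resolution of the double cover of the blown--up plane; for this I would either invoke the standard theory of double covers and simultaneous resolution of $A_1$--singularities or, following \cite{GS}~[Ex.~7.3.27], verify it by a direct handle computation. The disconnectedness of the $\Z/2$ cover over $\mathbb S^2\setminus D$ in the last step, which is what makes the construction a genuine fiber sum rather than a twisted double, is the other point requiring care.
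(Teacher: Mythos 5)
The paper offers no proof of this lemma at all: it states that the argument ``can be extracted from \cite{GS} [Ex 7.3.27]'' and moves on. What you have written is essentially a reconstruction of that standard argument, and in outline it is correct: the pencil of lines through a point $p\notin B$ does pull back to a genus~$g$ pencil with base locus $\pi^{-1}(p)$, and since $e_1$ is disjoint from the branch curve its preimage is two disjoint $(-1)$--spheres, giving $Y(g)=W(g)\#2\CPb$; degenerating $B$ to $C_1\cup C_2$ and using that the $A_1$ Milnor fiber is diffeomorphic to the minimal resolution correctly identifies $W(g)$ with the double cover of $\CP\#(g+1)^2\CPb$ branched over the two disjoint proper transforms; and the branched double cover over two regular fibers is indeed the (untwisted) fiber sum. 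Your write-up also supplies more detail than the paper's Example~\ref{Ex} later uses, so nothing you prove is wasted.

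The one step whose justification does not hold up as written is the disconnectedness of the cover over $q^{-1}(\mathbb S^2\setminus D)$ in your last paragraph. Checking that the classifying class vanishes on a loop parallel to $\partial D$ only controls that one homology class; it does not by itself show the restriction of the $\Z/2$ class to all of $H_1(q^{-1}(\mathbb S^2\setminus D);\Z/2)$ is zero, which is what you need for the cover there to be the trivial one. In the case at hand this is easy to repair: each exceptional sphere $E_i$ of $\CP\#(g+1)^2\CPb$ is a section of the resolved pencil, so $E_i\setminus\nu(F)$ is a disk bounding the meridian of $F$, whence $\pi_1(q^{-1}(\mathbb S^2\setminus D))\cong\pi_1(\CP\#(g+1)^2\CPb)=1$ and the unbranched cover over the complement is forced to be trivial. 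Alternatively, you can sidestep the cut-and-paste entirely by observing that the fiber product of $q\colon M\to\mathbb S^2$ with the double cover $\mathbb S^2\to\mathbb S^2$ branched at the two chosen regular values is a smooth double cover of $M$ branched over $F_1\sqcup F_2$ (unique since $M$ is simply connected), and as a fibration over the new $\mathbb S^2$ its monodromy factorization is the square of that of $q$, which is by definition the fiber sum $M\#_F M$. Either repair closes the gap; with it your argument is complete.
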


\begin{exmp}\label{Ex} In this example, we study the topology of complex surfaces $W(g)$ in some details. Recall that by Lemma \ref{E1} the complex surface $W(g)$ is the fiber sum of two copies of the rational surface $\CP\#(g^{2}+2g+1)\CPb$ along the complex curve $\Sigma$ of genus $g(g-1)/2$ and self-intersection zero. Using the fiber sum decomposition, we compute the Euler characteristic and the signature of $W(g)$ as follows: $e(W(g)) = 2e(\CP\#(g^{2}+2g+1)\CPb) - 2e(\Sigma) = 4g^2 + 2g + 4$, and $\sigma(W(g)) = 2 \sigma(\CP\#(g^{2}+2g+1)\CPb) = -2(g^{2}+2g)$. Next, we recall from \cite{Fu} that $\CP\#(g^{2}+2g+1)\CPb = {\Phi}_{g(g-1)/2}(1) \cup N_{g(g-1)/2}(1)$, where ${\Phi}_{g(g-1)/2}(1)$ and $N_{g(g-1)/2}(1)$ are Milnor fiber and generalized Gompf nucleus in $\CP\#(g^{2}+2g+1)\CPb$ respectively. Notice that such decomposition shows that the intersection form of $\CP\#(g^{2}+2g+1)\CPb$ splits as $N \oplus M(g)$, where $N = \bigl(\begin{smallmatrix} 0&1\\ 1&-1\end{smallmatrix}\bigr)$ and $M(g)$ is a matrix whose entries are given by a negative definite plumbing tree in the Figure~\ref{fig:plumb}. Consequently, we obtain the following decomposition of the intersection form of $W(g)$:  $2M(g) \oplus H \oplus g(g-1) H$, where $H$ is a hyperbolic pair. Let us choose the following basis which realizes the intesection matrix $M(g) \oplus N$ of $\CP\#(g^{2}+2g+1)\CPb$:  $ <f = (g+1)h - e_1 - \ \cdots  \ - e_{(g+1)^{2}}, \ e_{(g+1)^{2}}, \ e_1 - e_2, \ e_2 - e_3, \ \cdots, \ e_{(g+1)^{2}-2} - e_{(g+1)^{2}-1}, \ h - e_{(g+1)^{2}-(g+1)} - \cdots - e_{(g+1)^{2}-2} - e_{(g+1)^{2}-1}>$. Observe that the last $(g+1)^{2} - 1$ classes can be represented by spheres and their self-intersections are given as in the Figure~\ref{fig:plumb}. $f$ is the class of fiber of the genus $g(g-1)/2$ Lefschetz fibration on $\CP\#(g^{2}+2g+1)\CPb$ and $e_{(g+1)^{2}}$ is a sphere section of self-intersection $-1$. Using the generalized fiber sum decomposition of $W(g)$, it is not hard to see the surfaces that generate the intersection matrix $2M(g) \oplus H \oplus g(g-1) H$. The two copies of the Milnor fiber ${\Phi}_{g(g-1)/2}(1) \subset \CP\#(g^{2}+2g+1)\CPb$ are in $W(g)$, providing $2((g+1)^2 - 1)$ spheres of self-intersections $-2$ and $-g$ (corresponding to the classes $\{ e_1 - e_2, \ e_2 - e_3, \ \cdots, \ e_{(g+1)^{2}-2} - e_{(g+1)^{2}-1}, \ h - e_{(g+1)^{2}-(g+1)} - \cdots - e_{(g+1)^{2}-3} + e_{(g+1)^{2}-2} + e_{(g+1)^{2}-1} \}$ and $\{ {e_1}' - {e_2}', \ {e_2}' - {e_3}', \ \cdots, \ e_{(g+1)^{2}-2}' - e_{(g+1)^{2}-1}', \ h' - e_{(g+1)^{2}-(g+1)} - \cdots - e_{(g+1)^{2}-3}' - e_{(g+1)^{2}-2}' - e_{(g+1)^{2}-1}'\}$), realize two copies of $M(g)$. One copy of hyperbolic pair $H$ comes from an identification of the fibers $f$ and $f'$, and a sphere section $\sigma$ of self-intersection $-2$ obtained by sewing the sphere sections $e_{(g+1)^{2}}$ and ${e_{(g+1)^2}}'$. The remaining $g(g-1)$ copies of $H$ come from $g(g-1)$ rim tori and their dual $-2$ spheres (see related discussion in \cite{GS}, page 73)). These $4g^2 + 2g + 2$ classes generate $H_2$ of $W(g)$. Furthermore, using the formula for the canonical class of the generalized symplectic sum and the adjunction inequality, we compute $K_{W(g)} = (g-2)(h + h')$. Also, the class of the genus $g$ surface of square $2$ of the genus $g$ Lefschetz pencil on $W(g)$ is given by $h + h'$. As a consequence, the class of the genus $g$ fiber in $W(g)\#2\CPb$ is given by $h + h' - E_{1} - E_{2}$, where $E_{1}$ and $E_{2}$ are the homology classes of the exceptional spheres of the blow-ups at the points $p_1$ and $p_2$, the base points of the pencil. We can also verify the symplectic surface $\Sigma$, given by the class $h + h' - E_{1} - E_{2}$, has genus $g$ by applying the adjunction formula to $(W(g)\#2\CPb, \Sigma)$: $g(\Sigma) = 1 + 1/2(K_{W(g)\#2\CPb} \cdot [\Sigma] + [\Sigma]^2) = 1 + ((g-2)(h + h') + E_{1} + E_{2}) \cdot (h + h' - E_{1} - E_{2}) + (h + h' - E_{1} - E_{2})^2)/2 = 1 + (2(g-2) + 2)/2 = g$. We can notice from the intersection form of $W(g)$ that all rim tori can be chosen to have no intersections with the genus $g$ surface in the pencil given by the homology class $h + h'$. Thus, the genus $g$ fiber $\Sigma$ can be chosen to be disjoint from the rim tori that descend to $W(g)\#2\CPb$. 
\end{exmp}

\begin{figure}[ht]
\begin{center}
\includegraphics[scale=.63]{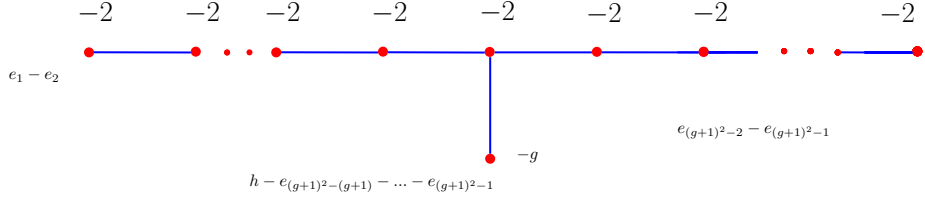}
\caption{Plumbing tree for ${\Phi}_{g(g-1)/2}(1)$}
\label{fig:plumb}
\end{center}
\end{figure}

Let $k$ be any nonnegative integer, and $\mathbb{F}_{k}$ denote $k$-th Hirzebruch surface. Recall that $\mathbb{F}_{k}$ admits the structure of holomorphic $\CPS$ bundle over $\CPS$ with two disjoint holomorphic sections $\Delta_{+k}$ and $\Delta_{-k}$ with $\Delta_{\pm k} = \pm k$.    

\begin{lem}\label{E3} The genus $g$ Lefschetz fibration on $Z(g)$ over $\mathbb{S}^2$ with the monodromy $(c_1c_2 \cdots c_{2g})^{2(2g+1)} = 1$ can be obtained as the $2$-fold cover of $\mathbb{F}_{2}$ branched over the disjoint union of a smooth curve $C$ in the linear system $|(2g+1)\Delta_{+2}|$ and $\Delta_{-2}$  
\end{lem}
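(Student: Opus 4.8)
The plan is to mirror the proof of Lemma~\ref{E}: realize $Z(g)$ as a double branched cover, compute the braid monodromy of the branch curve by degenerating it into a union of sections, and lift the resulting braid factorization to $\Gamma_g$ via the hyperelliptic (Birman--Hilden) correspondence. First I would fix the geometry on $\mathbb{F}_{2}\to\CPS$, using $F^2=0$, $\Delta_{-2}\cdot F=\Delta_{+2}\cdot F=1$, $\Delta_{-2}^2=-2$, $\Delta_{+2}^2=+2$, and $\Delta_{+2}=\Delta_{-2}+2F$. Then $C\cdot F=2g+1$ and $C\cdot\Delta_{-2}=(2g+1)(\Delta_{-2}^2+2F\cdot\Delta_{-2})=(2g+1)(-2+2)=0$, so a generic $C\in|(2g+1)\Delta_{+2}|$ is indeed disjoint from $\Delta_{-2}$, as required. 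A generic fiber $F_0\cong\CPS$ of the ruling meets the branch locus $C\sqcup\Delta_{-2}$ in $2g+1+1=2g+2$ points, so by Riemann--Hurwitz its preimage under the double cover has genus $g$. Composing the double cover with the ruling therefore produces a genus $g$ fibration $Z(g)\to\CPS$, whose singular fibers arise precisely over the fibers of $\mathbb{F}_2$ tangent to $C$, while the horizontal section $\Delta_{-2}$ contributes no critical points (its ramification curve descends to a section of the fibration).

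Next I would compute the braid monodromy. Degenerate $C$ into $2g+1$ sections $L_1,\dots,L_{2g+1}\in|\Delta_{+2}|$ in general position; since $L_i\cdot L_j=\Delta_{+2}^2=2$, the configuration has $2\binom{2g+1}{2}=2g(2g+1)$ nodes, and smoothing them yields a smooth $C$ of genus $p_a=1+\tfrac12 C\cdot(C+K_{\mathbb{F}_2})=4g^2$. Viewing $C$ as a $(2g+1)$-fold cover of the base via the ruling, Riemann--Hurwitz gives $4g(2g+1)$ simple branch points, matching exactly the $2g\cdot 2(2g+1)=4g(2g+1)$ letters of $(\sigma_1\cdots\sigma_{2g})^{2(2g+1)}$. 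The decisive point is the braid at infinity: a section in $|\Delta_{+2}|$ is locally $w=az^{2}+bz+c$ in an affine base coordinate $z$, so as $z$ traverses a large loop each of the $2g+1$ moving points winds \emph{twice} about $\Delta_{-2}$; the $2g+1$ points thus rotate through $4\pi$, and the monodromy at infinity is the fourth power $\Delta^{4}$ of the half-twist (rather than $\Delta^{2}$, the extra factor being the $+2$ framing of $\Delta_{+2}$). Regenerating the nodal configuration $L_1\cup\cdots\cup L_{2g+1}$ in the manner of Moishezon--Teicher, one organizes this into the standard factorization $\Delta^{4}=(\sigma_1\sigma_2\cdots\sigma_{2g})^{2(2g+1)}$ in the braid group $B_{2g+1}$ on the moving strands.

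Finally I would lift to $\Gamma_g$. Under the double cover of $\CPS$ branched over the $2g+2$ points, each half-twist $\sigma_i$ lifts to the Dehn twist $c_i$ about the corresponding vanishing cycle; since the $\Delta_{-2}$ branch point stays fixed, only $\sigma_1,\dots,\sigma_{2g}$ occur downstairs and only $c_1,\dots,c_{2g}$ appear upstairs. Because the full twist $\Delta^{2}$ lifts to the hyperelliptic involution $\iota$, the braid at infinity $\Delta^{4}$ lifts to $\iota^{2}=1$, so the factorization lifts to the genuine positive relator $(c_1c_2\cdots c_{2g})^{2(2g+1)}=1$ in $\Gamma_g$; this is exactly the even chain relation $G(g)=1$ for the chain $c_1,\dots,c_{2g}$, whose regular neighborhood fills $\Sigma_g$ minus a disk. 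This identifies the genus $g$ Lefschetz fibration with monodromy $G(g)=1$ with the asserted $2$-fold cover of $\mathbb{F}_2$, finishing the proof.

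The main obstacle is the middle step: rigorously verifying that the global braid monodromy equals $\Delta^{4}$ (the extra factor arising from the winding number $2$ of $\Delta_{+2}$) and that the Moishezon--Teicher regeneration arranges the $4g(2g+1)$ half-twists into $(\sigma_1\cdots\sigma_{2g})^{2(2g+1)}$ in the correct cyclic order. By comparison, the genus computation, the disjointness of $C$ and $\Delta_{-2}$, and the Birman--Hilden lift are routine, paralleling Lemma~\ref{E}.
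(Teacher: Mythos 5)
Your proposal is correct and follows essentially the same route as the paper's proof: realize $Z(g)\to\CPS$ as the composition of the double cover $Z(g)\to\mathbb{F}_2$ with the ruling, observe that a generic fiber is a double cover of $\CPS$ branched at $2g+2$ points (hence genus $g$), and derive the monodromy from the braid monodromy of the branch curve $C\cup\Delta_{-2}$. The paper states this in two sentences and leaves the braid monodromy computation implicit, whereas you carry out the degeneration, the count $4g(2g+1)$, the identification of the braid at infinity with $\Delta^4$, and the lift $(\sigma_1\cdots\sigma_{2g})^{2g+1}\mapsto\iota$ — all of which check out.
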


\begin{proof} The Lefschetz fibration on $Z(g) \rightarrow  \CPS$ obtained by composing the branched cover map $Z(g) \rightarrow \mathbb{F}_{2}$ with the bundle map $\mathbb{F}_{2} \rightarrow \CPS$. A generic fiber is the double cover of a sphere fiber of $\mathbb{F}_{2}$ branched over $2g + 2$ points. The monodromy of this Lefschetz fibration can be derived from the braid monodromy of the branch curve $C \cup \Delta_{-2}$. The fibration admits a holomorphic sphere section $S$ with $S^2 = -1$, which is obtained by lifting $\Delta_{-2}$ to $Z(g)$.

\end{proof}

\subsection{Rational Blowdown} In this subsection, we review the rational blowdown surgery introduced by Fintushel-Stern \cite{FS1}. For details the reader is referred to \cite{FS1,P1}.

Let $p \geq  2$ and $C_p$ be the smooth $4$-manifold obtained by plumbing disk bundles over the $2$-sphere according to the following linear diagram  

 \begin{picture}(100,60)(-90,-25)
 \put(-12,3){\makebox(200,20)[bl]{$-(p+2)$ \hspace{6pt}
                                  $-2$ \hspace{96pt} $-2$}}
 \put(4,-25){\makebox(200,20)[tl]{$u_{p-1}$ \hspace{25pt}
                                  $u_{p-2}$ \hspace{86pt} $u_{1}$}}
  \multiput(10,0)(40,0){2}{\line(1,0){40}}
  \multiput(10,0)(40,0){2}{\circle*{3}}
  \multiput(100,0)(5,0){4}{\makebox(0,0){$\cdots$}}
  \put(125,0){\line(1,0){40}}
  \put(165,0){\circle*{3}}
\end{picture}

\noindent where each vertex $u_{i}$ of the linear diagram represents a disk bundle over $2$-sphere with the given Euler number. 

The boundary of $C_p$ is the lens space $L(p^2, 1 - p)$ which also bounds a rational ball $B_p$ with $\pi_1(B_p) = {\mathbb{Z}}_p$ and $\pi_1(\partial B_p) \rightarrow  \pi_1(B_p)$ surjective. If $C_p$ is embedded in a $4$-manifold $X$ then the rational blowdown manifold $X_p$ is obtained by replacing $C_p$ with $B_p$, i.e., $X_p = (X \setminus C_p) \cup B_p$. If $X$ and $X \setminus C_p$ are simply connected, then so is $X_p$. The following lemma is easy to check, so we omit the proof.

\begin{lem}\label{thm:rb} $b_{2}^{+}(X_p) = {b_2}^{+}(X)$, $e(X_p) = e(X) - (p-1)$, $\sigma(X_p) = \sigma(X) + (p-1)$, ${c_1}^{2}(X_p) = {c_1}^2(X) + (p-1)$, and $\chi_{h}(X_p) = \chi_{h}(X)$.
\end{lem}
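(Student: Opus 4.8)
The plan is to reduce everything to the topological invariants of the two building blocks $C_p$ and $B_p$ and their common boundary, and then feed these into the standard additivity principles for the Euler characteristic and the signature. First I would record the invariants of the pieces. The configuration $C_p$ is a linear plumbing of $p-1$ copies of $S^2$, so it is simply connected with $H_2(C_p;\mathbb{Z}) \cong \mathbb{Z}^{p-1}$ carrying the negative definite intersection form read off from the diagram; hence $e(C_p) = p$ (from $b_0 = 1$, $b_2 = p-1$, or by inclusion--exclusion $2(p-1) - (p-2)$), $\sigma(C_p) = -(p-1)$, and $b_2^{+}(C_p) = 0$. On the other side $B_p$ is a rational homology ball, so $H_{*}(B_p;\mathbb{Q}) = H_{*}(\mathrm{pt};\mathbb{Q})$, giving $e(B_p) = 1$ and $\sigma(B_p) = 0$. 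Finally the common boundary $L(p^2,1-p)$ is a closed oriented $3$-manifold, so its Euler characteristic and signature both vanish.

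Next I would apply additivity. Writing $X = (X \setminus C_p) \cup_{L} C_p$ and $X_p = (X \setminus C_p) \cup_{L} B_p$, inclusion--exclusion for the Euler characteristic (with $e(L) = 0$) gives $e(X_p) - e(X) = e(B_p) - e(C_p) = 1 - p$, proving $e(X_p) = e(X) - (p-1)$. Novikov additivity of the signature along the closed $3$-manifold $L$ gives $\sigma(X) = \sigma(X \setminus C_p) + \sigma(C_p)$ and $\sigma(X_p) = \sigma(X \setminus C_p) + \sigma(B_p)$; subtracting yields $\sigma(X_p) - \sigma(X) = \sigma(B_p) - \sigma(C_p) = p-1$, as claimed.

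For $b_2^{+}$ I would argue homologically. Since $L(p^2,1-p)$ is a rational homology sphere, the Mayer--Vietoris sequence with $\mathbb{Q}$-coefficients gives splittings $H_2(X;\mathbb{Q}) \cong H_2(X \setminus C_p;\mathbb{Q}) \oplus H_2(C_p;\mathbb{Q})$ and $H_2(X_p;\mathbb{Q}) \cong H_2(X \setminus C_p;\mathbb{Q}) \oplus H_2(B_p;\mathbb{Q})$, which are orthogonal with respect to the intersection form because classes from the two pieces admit disjoint representatives. As $H_2(C_p;\mathbb{Q})$ is negative definite and $H_2(B_p;\mathbb{Q}) = 0$, the positive part of the form is in both cases supported on the common summand $H_2(X \setminus C_p;\mathbb{Q})$, on which the two restricted forms agree; hence $b_2^{+}(X_p) = b_2^{+}(X \setminus C_p) = b_2^{+}(X)$. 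Finally, since the relevant manifolds carry (almost-)complex structures, I would invoke the identities $c_1^{2} = 2e + 3\sigma$ and $\chi_{h} = (e+\sigma)/4$ together with the two formulas just proved to get $c_1^{2}(X_p) = 2(e(X)-(p-1)) + 3(\sigma(X)+(p-1)) = c_1^{2}(X) + (p-1)$ and $\chi_{h}(X_p) = \tfrac{1}{4}\big((e(X)-(p-1)) + (\sigma(X)+(p-1))\big) = \chi_{h}(X)$.

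The step I expect to require the most care is the invariance of $b_2^{+}$: one must verify that the Mayer--Vietoris splitting really is orthogonal for the intersection form (so that negative-definiteness of $C_p$ genuinely confines the positive part to the complement) and that the restricted form is unchanged by the surgery. The Euler characteristic and signature computations are routine once the building-block invariants are in hand, and the passage to $c_1^{2}$ and $\chi_{h}$ is purely formal.
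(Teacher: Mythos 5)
Your proposal is correct; the paper explicitly omits the proof as routine, and your argument (invariants of $C_p$, $B_p$, and $L(p^2,1-p)$, inclusion--exclusion for $e$, Novikov additivity for $\sigma$, the rational-homology-sphere Mayer--Vietoris splitting for $b_2^{+}$, and the formal identities $c_1^2 = 2e+3\sigma$, $\chi_h = (e+\sigma)/4$) is exactly the standard one the authors have in mind.
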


We now collect some theorems on rational blowdown for later use.

\begin{thm}\label{SW1} \cite{FS1, P1}. Suppose $X$ is a smooth 4-manifold with $b_{2}^{+}(X) > 1$ which contains a configuration $C_{p}$. If $L$ is a SW basic class of $X$ satisfying $L\cdot u_{i} = 0$ for any i with $1 \leq i \leq p-2$  and $L\cdot u_{p-1} = \pm p$, then $L$ induces a SW basic class $\bar L$ of $X_{p}$ such that $SW_{X_{p}}(\bar L) = SW_{X}(L)$.  

\end{thm}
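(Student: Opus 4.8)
The plan is to deduce the invariance of the Seiberg-Witten invariant under rational blowdown from a gluing/excision argument along the lens space $Y = L(p^2,1-p)$ that forms the common boundary of $C_p$ and $B_p$. Write $X_0 = X \setminus \mathrm{int}(C_p)$, so that $X = X_0 \cup_Y C_p$ and $X_p = X_0 \cup_Y B_p$, with $Y$ a rational homology sphere. Since $C_p$ is negative definite, removing it does not decrease $b_2^+$; the decomposition gives $b_2^+(X_0) = b_2^+(X) > 1$, and by Lemma~\ref{thm:rb} also $b_2^+(X_p) = b_2^+(X) > 1$. Hence the Seiberg-Witten invariants of both $X$ and $X_p$ are well defined and independent of the metric and generic perturbation, and I may compute them with a metric that is stretched to a long cylindrical neck $Y \times [-T,T]$ inside each manifold.

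Next I would make the homological descent that produces $\bar L$. The Mayer-Vietoris sequence for $X = X_0 \cup_Y C_p$, using $H_*(Y;\mathbb{Q}) \cong H_*(S^3;\mathbb{Q})$, yields $H_2(X;\mathbb{Q}) \cong H_2(X_0;\mathbb{Q}) \oplus H_2(C_p;\mathbb{Q})$, where $H_2(C_p;\mathbb{Q})$ is spanned by the plumbing spheres $u_1,\ldots,u_{p-1}$. The analogous sequence for $X_p$ has $H_2(B_p;\mathbb{Q}) = 0$, so $H_2(X_p;\mathbb{Q}) \cong H_2(X_0;\mathbb{Q})$, sitting inside $H_2(X;\mathbb{Q})$ as the orthogonal complement of the span of the $u_i$. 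The hypotheses $L\cdot u_i = 0$ for $1 \le i \le p-2$ and $L\cdot u_{p-1} = \pm p$ are precisely the numerical conditions guaranteeing that the restriction of the underlying $\mathrm{Spin}^c$ structure $\mathfrak{s}$ to $C_p$ restricts, on the boundary $Y$, to the unique $\mathrm{Spin}^c$ structure that extends across $B_p$. Consequently $\mathfrak{s}$ descends to a $\mathrm{Spin}^c$ structure $\bar{\mathfrak{s}}$ on $X_p$ whose restriction to $X_0$ agrees with that of $\mathfrak{s}$, and I set $\bar L = c_1(\bar{\mathfrak{s}})$.

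The analytic heart is the gluing formula for Seiberg-Witten invariants along the rational homology sphere $Y$. Stretching the neck, $SW_X(L)$ factors as a pairing, in the Floer homology $HF(Y)$, of the relative invariant of $(X_0,\mathfrak{s}|_{X_0})$ with the relative invariant of $(C_p,\mathfrak{s}|_{C_p})$, and likewise $SW_{X_p}(\bar L)$ pairs the \emph{same} relative invariant of $X_0$ with the relative invariant of $(B_p,\bar{\mathfrak{s}}|_{B_p})$. The statement then reduces to the local computation that, for the $\mathrm{Spin}^c$ structure singled out by the intersection conditions, the relative invariant of $C_p$ and the relative invariant of $B_p$ are the same element of $HF(Y)$. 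For $B_p$, being a rational ball with the homology of a point, the relevant reducible solution is isolated and the relative invariant is a fixed generator; for $C_p$, the condition $L\cdot u_{p-1} = \pm p$ selects the one $\mathrm{Spin}^c$ structure on the negative-definite plumbing whose relative invariant is that same generator, so the two pairings coincide and $SW_{X_p}(\bar L) = SW_X(L)$.

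The main obstacle is this last step: establishing the gluing formula along $Y = L(p^2,1-p)$ and carrying out the model computations on $C_p$ and $B_p$. Because $Y$ is only a rational (not integral) homology sphere, it carries several $\mathrm{Spin}^c$ structures with possibly nontrivial reducibles, and one must control exactly which structures on the two sides glue to a fixed one on the closed manifolds and verify that the correction terms ($d$-invariants of $Y$) cancel. This is precisely the analysis carried out by Fintushel-Stern \cite{FS1} and extended by Park \cite{P1}; I would follow their treatment, using the explicit plumbing description of $C_p$ and the standard model of $B_p$ to match the moduli spaces and obtain the asserted equality of invariants.
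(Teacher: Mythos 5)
The paper offers no proof of this statement: it is quoted verbatim from Fintushel--Stern \cite{FS1} and Park \cite{P1} and used as a black box, so there is nothing internal to compare your argument against. That said, your outline is a faithful reconstruction of the strategy in those references: decompose $X=X_0\cup_Y C_p$ and $X_p=X_0\cup_Y B_p$ along $Y=L(p^2,1-p)$, use the rational-homology-sphere Mayer--Vietoris splitting to descend the ${\rm Spin}^c$ structure, and compare relative invariants of $C_p$ and $B_p$ by a neck-stretching argument. Two remarks. First, the fact that makes the gluing step tractable in \cite{FS1} is not a general Floer-theoretic pairing but the positive scalar curvature metric on the lens space $Y$: it rules out irreducible solutions on the neck, and since $C_p$ is negative definite and $B_p$ is a rational ball, both pieces contribute only reducibles, so the comparison reduces to finite-dimensional bookkeeping of ${\rm Spin}^c$ structures and index computations; your phrasing in terms of $HF(Y)$ and $d$-invariant corrections is an anachronistic but essentially equivalent packaging. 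Second, two small imprecisions: there are $p$ (not one) ${\rm Spin}^c$ structures on $Y$ that extend over $B_p$ (the hypothesis on $L\cdot u_i$ guarantees that $L|_Y$ is one of them, and the extension over $B_p$ is then unique since $H^2(B_p;\Z)\to H^2(Y;\Z)$ is injective); and you should record the index check $(L|_{C_p})^2=-(p-1)$, which is forced by $L\cdot u_i=0$ for $i\le p-2$ and $L\cdot u_{p-1}=\pm p$ and is what makes the formal dimensions of the two moduli spaces agree so that $SW_{X_p}(\bar L)=SW_X(L)$ is even a meaningful equality. With those points supplied from \cite{FS1,P1}, your sketch is the intended proof.
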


\begin{thm}\label{SW2} \cite{FS1, P1} If a simply connected smooth $4$-manifold $X$ contains a configuration $C_{p}$, then the SW-invariants of $X_{p}$ are completely determined by those of $X$. That is, for any characteristic line bundle $\bar{L}$ on $X_{p}$ with $SW_{X_{p}}(\bar{L}) \ne 0$, there exists a characteristic line bundle $L$ on $X$ 
such that $SW_{X}(L) = SW_{X_{p}}(\bar{L})$.

\end{thm}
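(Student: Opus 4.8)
# Proof Proposal for Theorem \ref{SW2}

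The plan is to realize the rational blowdown as a controlled surgery on homology and then invoke the known behavior of Seiberg--Witten invariants under such surgeries, combined with the gluing formula that governs how basic classes restrict across the lens space $L(p^2,1-p) = \partial C_p = \partial B_p$. The essential point is that the rational ball $B_p$ is homologically trivial in a strong sense: since $\pi_1(B_p) = \mathbb{Z}_p$ is finite, $H_2(B_p;\mathbb{Q}) = 0$, so replacing $C_p$ by $B_p$ only removes the span of the configuration spheres $u_1,\ldots,u_{p-1}$ from the rational homology. I would first set up the Mayer--Vietoris sequence for the decomposition $X_p = (X\setminus C_p)\cup_{L(p^2,1-p)} B_p$ to identify $H_2(X_p;\mathbb{Z})$ and its intersection form, and to describe exactly which characteristic line bundles on $X_p$ can arise. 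Because $X$ is simply connected and $X\setminus C_p$ is as well (this follows from the $\pi_1$-surjectivity condition stated just before the theorem), $X_p$ is simply connected, so characteristic line bundles are classified by their first Chern classes in $H^2(X_p;\mathbb{Z})$.

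Next I would establish the key restriction statement: any $\bar L$ on $X_p$ with $SW_{X_p}(\bar L)\neq 0$ must, after restriction to the common piece $X\setminus C_p$, extend over $C_p$ to a characteristic class $L$ on $X$ satisfying the numerical conditions $L\cdot u_i = 0$ for $1\leq i\leq p-2$ and $L\cdot u_{p-1} = \pm p$. This is precisely the hypothesis appearing in Theorem \ref{SW1}, and the content here is the converse direction: a nonvanishing SW invariant on $X_p$ forces the dual class to be orthogonal to the right spheres and to pair correctly with $u_{p-1}$. The mechanism is the dimension/adjunction constraint coming from the embedded spheres $u_i$ of square $-2$ (for $1\leq i\leq p-2$) and $-(p+2)$ (for $i = p-1$): on a manifold with $b_2^+ > 1$, a basic class must satisfy the adjunction inequality $|L\cdot\Sigma| + \Sigma^2 \leq 2g(\Sigma)-2$ for each embedded sphere, which for the $-2$-spheres yields $L\cdot u_i = 0$ and for the $-(p+2)$-sphere forces $|L\cdot u_{p-1}|\leq p$; the gluing theory across $L(p^2,1-p)$ then pins the value to exactly $\pm p$. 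Once this is shown, the assignment $\bar L \leftrightarrow L$ is the one from Theorem \ref{SW1}, run in reverse, and the equality $SW_X(L) = SW_{X_p}(\bar L)$ is exactly the conclusion of that theorem applied to this $L$.

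The main obstacle I anticipate is the bookkeeping needed to show that every basic class $\bar L$ on $X_p$ actually lifts to a characteristic $L$ on $X$ with the stated intersection pattern, i.e.\ that the map from basic classes of $X$ to basic classes of $X_p$ is surjective and not merely injective. The subtlety is that $H^2(X_p;\mathbb{Z})$ can carry $\mathbb{Z}_p$-torsion arising from $H_1(L(p^2,1-p))=\mathbb{Z}_{p^2}$ and the inclusion of $B_p$, so I must check that the restriction of $\bar L$ to $X\setminus C_p$ indeed admits an extension over $C_p$ with integral coefficients and the correct characteristic property, rather than merely a rational or mod-torsion extension. I would handle this by analyzing the long exact sequence of the pair $(C_p,\partial C_p)$ together with the corresponding sequence for $(B_p,\partial B_p)$, using that the inclusion-induced map $H^2(C_p)\to H^2(\partial C_p)$ and its $B_p$-counterpart have compatible images; the lens space $L(p^2,1-p)$ bounds both pieces, and the characteristic condition constrains the mod-$2$ reduction so that the only classes surviving with nonzero SW invariant are precisely those pulled back from the $L$-classes of $X$ selected in Theorem \ref{SW1}. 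The remaining verification of the numerical values is then the routine adjunction computation sketched above.
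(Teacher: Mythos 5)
This statement is not proved in the paper at all: it is quoted verbatim, with attribution, from Fintushel--Stern \cite{FS1} and Park \cite{P1}, so there is no internal proof to compare your attempt against. Judged on its own terms, your proposal captures some of the correct surrounding structure (the decomposition along $L(p^2,1-p)$, the homological triviality of $B_p$ over $\mathbb{Q}$, the torsion bookkeeping in $H^2$), but it has a genuine gap at its core.

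The gap is that the equality $SW_{X_p}(\bar L) = SW_X(L)$ is a gauge-theoretic gluing statement, and nothing in your outline actually produces it. The mechanism in \cite{FS1, P1} is a neck-stretching argument along $L(p^2,1-p)$: because the lens space carries a metric of positive scalar curvature, all Seiberg--Witten solutions on the neck are reducible flat connections, and the invariant of $X_p$ is computed by matching, for each flat connection on the boundary, the spin$^c$ structures on $B_p$ and on $C_p$ extending it and comparing the resulting moduli spaces over $X\setminus C_p$. Your proposal gestures at this with the phrase ``the gluing theory across $L(p^2,1-p)$ then pins the value,'' but that sentence is precisely where the theorem lives; without proving (or at least correctly quoting) the gluing formula, the Mayer--Vietoris and exact-sequence analysis only identifies \emph{which} cohomology classes could correspond, not that their invariants agree. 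Separately, your adjunction-inequality step is aimed at the wrong manifold: the spheres $u_1,\dots,u_{p-1}$ live in $C_p\subset X$, so adjunction constrains basic classes of $X$ (giving $L\cdot u_i=0$ for the $-2$-spheres and $|L\cdot u_{p-1}|\le p$), whereas the surjectivity you need to establish concerns basic classes of $X_p$, to which these spheres do not belong. The correct statement that every basic class of $X_p$ arises from one of $X$ comes out of the same gluing/counting argument (every spin$^c$ structure on $B_p$ restricting to a given flat connection on the lens space is matched by ones on $C_p$ with the right formal dimension), not from adjunction on $X_p$.
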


\begin{thm}[\cite{EN},\cite{EG} $(p=2)$, \cite{EMVHM} $(p\geq 3)$]\label{EN}
Let $\varrho$, $\varrho^{\prime}$ be positive relators of $\Gamma_g$, and let $X_\varrho$, $X_{\varrho^{\prime}}$ be the corresponding Lefschetz fibrations over $\mathbb{S}^2$, respectively. Suppose that $\varrho^\prime$ is obtained by applying a daisy substitution of type $p$ to $\varrho$. Then, $X_{\varrho^\prime}$ is a rational blowdown of $X_\varrho$ along a configuration $C_{p}$. Therefore, we have
\begin{align*}
\sigma(X_\varrho^\prime)=\sigma(X_{\varrho})+(p-1), \ \ \ \mathrm{and} \ \ \ e(X_\varrho^\prime)=e(X_{\varrho})-(p-1). 
\end{align*}
\end{thm}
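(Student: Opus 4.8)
The plan is to localize the claim to a model over a subdisk of $\mathbb{S}^2$, identify the rational blowdown inside that model by a handle calculation, and then read off the numerical formulas from Lemma \ref{thm:rb}. Writing $\varrho = U \cdot t_{\delta_0}^{p-1} t_{\delta_1}\cdots t_{\delta_{p+1}} \cdot V$, the type-$p$ daisy substitution produces $\varrho' = U \cdot t_{x_1}\cdots t_{x_{p+1}} \cdot V$. I would split the base as $\mathbb{S}^2 = D_1 \cup_{\partial} D_2$ into two disks so that the critical values of the displayed central subword lie over $D_1$ and all remaining critical values (those of $U$ and $V$) lie over $D_2$. Then $X_\varrho$ and $X_{\varrho'}$ are both built by gluing the common piece $P = f^{-1}(D_2)$ to a local piece over $D_1$: for $X_\varrho$ this local piece is the Lefschetz fibration $M \to D_1$ with vanishing cycles $\delta_0$ (with multiplicity $p-1$), $\delta_1,\ldots,\delta_{p+1}$, while for $X_{\varrho'}$ it is the Lefschetz fibration $M' \to D_1$ with vanishing cycles $x_1,\ldots,x_{p+1}$. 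The key point is that, by the daisy relation, $t_{\delta_0}^{p-1}t_{\delta_1}\cdots t_{\delta_{p+1}} = t_{x_1}\cdots t_{x_{p+1}}$ in $\Gamma_g$, so $M$ and $M'$ carry the same boundary monodromy over $\partial D_1$ and glue to $P$ along the same fibered boundary. Hence $X_\varrho$ and $X_{\varrho'}$ differ only by the replacement of $M$ with $M'$, and it suffices to exhibit a configuration $C_p$ in the interior of $M$, disjoint from $\partial D_1 \times \Sigma_g$, whose rational blowdown turns $M$ into $M'$; the surgery is then supported away from $P$ and globalizes.

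\emph{The local model.} I would present $M$ as $\Sigma_g \times D^2$ with a $2$-handle attached along each vanishing cycle with framing $-1$ relative to the fiber, and similarly for $M'$. Inside $M$, the $p-1$ parallel vanishing cycles along the central curve $\delta_0$ give, via the matching-cycle construction on consecutive pairs, a linear chain of $p-2$ symplectic $(-2)$-spheres; together with a leading sphere of self-intersection $-(p+2)$ built from the central thimble data, these assemble into the plumbing configuration $C_p$ with weights $-(p+2),-2,\ldots,-2$. For $p=2$ the chain is empty and one recovers the single $(-4)$-sphere of the lantern case of \cite{EG}. One then checks that this $C_p$ lies in the interior of $M$, disjoint from the gluing locus $\partial D_1 \times \Sigma_g$.

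\emph{Blowdown and conclusion.} The heart of the matter is to verify, by Kirby calculus on these handle pictures, that excising $C_p$ and gluing in the rational ball $B_p$ carries the $\delta$-handle diagram of $M$ to the $x$-handle diagram of $M'$; equivalently, that trading the $2p$ twists $t_{\delta_0}^{p-1}t_{\delta_1}\cdots t_{\delta_{p+1}}$ for the $p+1$ twists $t_{x_1}\cdots t_{x_{p+1}}$ is realized geometrically by the surgery giving $(M\setminus C_p)\cup B_p = M'$. This is precisely the explicit computation carried out in \cite{EG} for $p=2$ and in \cite{EMVHM} for $p\geq 3$, whose handle moves I would follow. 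Combined with the localization step, it yields $X_{\varrho'} = (X_\varrho \setminus C_p)\cup B_p$, that is, $X_{\varrho'}$ is the rational blowdown of $X_\varrho$ along $C_p$. The signature and Euler characteristic identities $\sigma(X_{\varrho'})=\sigma(X_\varrho)+(p-1)$ and $e(X_{\varrho'})=e(X_\varrho)-(p-1)$ then follow immediately from Lemma \ref{thm:rb}.

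I expect the crux to be the local model: correctly exhibiting the embedded $C_p$ with the right framings — in particular producing the $-(p+2)$-sphere and checking that the $(-2)$-chain is exactly the one generated by the repeated $\delta_0$-twists — and then carrying out the Kirby moves that identify $(M\setminus C_p)\cup B_p$ with $M'$. By contrast, the localization reduction and the concluding numerical deductions are comparatively routine.
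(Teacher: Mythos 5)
This theorem is quoted in the paper from \cite{EN}, \cite{EG} ($p=2$) and \cite{EMVHM} ($p\geq 3$) without proof, and your outline faithfully reconstructs the strategy of those references: localize over a disk containing the critical values of the substituted subword, exhibit the linear plumbing $C_p$ in that local piece (the $-(p+2)$-sphere being the planar subsurface $\Sigma_0^{p+2}$ of the fiber capped off by one Lefschetz thimble over each of $\delta_0,\delta_1,\ldots,\delta_{p+1}$, and the $(-2)$-chain coming from matching cycles on consecutive copies of $\delta_0$), and then verify by Kirby calculus that trading $C_p$ for $B_p$ produces the local model with vanishing cycles $x_1,\ldots,x_{p+1}$. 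Since you defer that last verification to the very sources the paper cites, your argument is essentially the paper's (i.e.\ the cited) proof, and the numerical identities do then follow immediately from Lemma~\ref{thm:rb} as you say.
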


\subsection{Knot Surgery} In this subsection, we briefly review the knot surgery operation, which gives rise to mutually non-diffeomorphic manifolds. For the details, the reader is referred to \cite{FS2}. 

Let $X$ be a $4$-manifold with ${b_2}^{+}(X) > 1$ and contain a homologically essential torus $T$ of self-intersection $0$. Let $N(K)$ be a tubular neighborhood of $K$ in $\mathbb{S}^3$, and let $T \times D^2$ be a tubular neighborhood of $T$ in $X$. The knot surgery manifold $X_{K}$ is defined by $X_K = (X \setminus (T \times D^2)) \cup (\mathbb{S}^1 \times (\mathbb{S}^3 \setminus N(K))$ where two pieces are glued in a way that the homology class of $[pt \times \partial  D^2]$ is identifed with $[pt \times \lambda]$ where $\lambda$ is the class of the longitude of knot $K$. Fintushel and Stern proved the theorem that shows Seiberg-Witten invariants of $X_{K}$ can be completely determined by the Seiberg-Witten invariant of $X$ and the Alexander polynomial of $K$ \cite{FS2}. Moreover, if $X$ and $X \setminus T$ are simply connected, then so is $X_K$. 

\begin{thm}\label{thm:knotsurgery} Suppose that $\pi_{1}(X) = \pi_{1}(X \setminus T) = 1$ and $T$ lies in a cusp neighborhood in $X$. Then $X_{K}$ is homeomorphic to $X$ and Seiberg-Witten invariants of $X_{K}$ is $SW_{X_K} = SW_{X} \cdot \Delta_{K}(t^2)$, where $t = t_{T}$ (in the notation of \cite{FS2}) and $\Delta_{K}$ is the symmetrized Alexander polynomial of $K$. If the Alexander polynomial $\Delta_{K}(t)$ of knot $K$ is not monic then $X_K$ admits no symplectic structure. Moreover, if $X$ is symplectic and $K$ is a fibered knot, then $X_{K}$ admits a symplectic structure.
\end{thm}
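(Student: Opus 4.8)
The plan is to establish the three assertions in turn --- the homeomorphism type of $X_K$, the Seiberg--Witten computation, and the symplectic versus non-symplectic dichotomy --- with the second carrying essentially all of the analytic weight. Throughout I would use that $X_K$ is obtained from $X$ by excising $T\times D^2$ and gluing in $S^1\times(\mathbb{S}^3\setminus N(K))$ along $T^3$, where the gluing sends $[pt\times\partial D^2]$ to $[pt\times\lambda]$.

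For the homeomorphism statement I would first run a Mayer--Vietoris computation. Since the knot complement $\mathbb{S}^3\setminus N(K)$ has the integral homology of $S^1\times D^2$ and the prescribed gluing matches the meridian of $T$ with the longitude $\lambda$ of $K$, the homology and the intersection form of $X_K$ agree with those of $X$; in particular $e$ and $\sigma$ are unchanged, because both $T\times D^2$ and $S^1\times(\mathbb{S}^3\setminus N(K))$ have vanishing Euler characteristic. To upgrade ``same intersection form'' to ``homeomorphic,'' I would show $X_K$ is simply connected and then invoke Freedman's classification of simply connected smooth closed $4$-manifolds by their intersection form. For $\pi_1$ I would apply van Kampen to the splitting along $T^3$: the hypothesis $\pi_1(X\setminus T)=1$ kills the first piece, and the assumption that $T$ lies in a cusp neighborhood forces the two generators of $\pi_1(T)$, being the vanishing cycles of the cusp, to be nullhomotopic in $X\setminus(T\times D^2)$. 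As the meridian of $K$ normally generates $\pi_1(\mathbb{S}^3\setminus N(K))$ and is carried to such a vanishing cycle under the gluing, the entire knot group dies and $\pi_1(X_K)=1$.

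The heart of the theorem is the formula $SW_{X_K}=SW_X\cdot\Delta_K(t^2)$, with $t=t_T$ recording the homology class of $T$. Here I would follow Fintushel--Stern and reduce to the Conway skein relation $\Delta_{K_+}-\Delta_{K_-}=(t^{1/2}-t^{-1/2})\Delta_{K_0}$ for the Alexander polynomial. The point is that altering a single crossing to pass between $K_+$, $K_-$, and $K_0$ is realized by $\pm 1$- and $0$-surgeries on a small unknot $C$ encircling the crossing, and $C$ gives rise to a torus $T_C$ in the knot-surgered manifold along which $X_{K_+}$, $X_{K_-}$, and $X_{K_0}$ differ by torus surgeries. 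The Seiberg--Witten surgery/gluing formula along $T_C$, organized in terms of relative invariants, then produces a relation among $SW_{X_{K_+}}$, $SW_{X_{K_-}}$, and $SW_{X_{K_0}}$ that mirrors the skein relation, the $t^2$ appearing because the relevant variable exponentiates $2[T]$. With the base case of the unknot, for which $\mathbb{S}^3\setminus N(K)=S^1\times D^2$ so that $X_{\mathrm{unknot}}=X$ and $\Delta_{\mathrm{unknot}}=1$, induction on the number of crossing changes needed to unknot $K$ completes the computation. Setting up the surgery formula and controlling the relative invariants across $T_C$ is the genuinely difficult, gauge-theoretic step, and it is the one I expect to be the main obstacle.

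The symplectic consequences then follow from theorems of Taubes and Gompf. If $\Delta_K$ is not monic, the extremal coefficient of $SW_{X_K}=SW_X\cdot\Delta_K(t^2)$ is not $\pm 1$; since a symplectic $4$-manifold with $b_2^+>1$ has $SW=\pm 1$ on its canonical (extremal) basic class by Taubes, $X_K$ admits no symplectic structure. Conversely, if $X$ is symplectic with $T$ a symplectic torus and $K$ is fibered, then $\mathbb{S}^3\setminus N(K)$ fibers over $S^1$, so $S^1\times(\mathbb{S}^3\setminus N(K))$ carries a symplectic form whose boundary $T^3$ meets a neighborhood of $T$ symplectically; Gompf's symplectic fiber-sum construction then equips $X_K$ with a symplectic structure.
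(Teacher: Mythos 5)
The paper does not prove this theorem; it is quoted as background from Fintushel--Stern \cite{FS2}, so there is no in-paper argument to compare yours against. Your outline faithfully reconstructs the proof in \cite{FS2}: Mayer--Vietoris, van Kampen and Freedman for the homeomorphism type; the Conway skein-relation induction driven by the Seiberg--Witten gluing formula along a $3$-torus for the product formula; Taubes' theorem for the non-monic obstruction (noting that if $SW_X=0$ then $SW_{X_K}=0$ and Taubes rules out a symplectic structure directly, while otherwise the extremal coefficient of $SW_X\cdot\Delta_K(t^2)$ fails to be $\pm1$); and the symplectic sum of $X$ with $S^1\times M_K$, $M_K$ the fibered $0$-surgery on $K$, via Gompf for the last assertion. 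The step you correctly isolate as the real difficulty---the relation among relative invariants across $T_C$ mirroring the skein relation---is indeed the technical core of \cite{FS2}, which neither you nor the present paper supplies.
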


\section{Lemmas}

In this section, we construct some relations by applying elementary transformations. These relations will be used to construct new relations obtained by daisy substitutions in Section~\ref{new words}. 

Let $a_1,\ldots,a_k$ be a sequence of simple closed curves on an oriented surface such that $a_i$ and $a_j$ are disjoint if $|i-j|\geq 2$ and that $a_i$ and $a_{i+1}$ intersect at one point. For simplicity of notation, we write $a_i$, $_f(a_i)$ instead of $t_{a_i}$, $t_{f(a_i)}=ft_{a_i}f^{-1}$, respectively. 
Moreover, write 
\begin{align*}
&b_i = {}_{a_{i+1}}(a_i)& &\mathrm{and}& &\bar{b}_i = {}_{a_{i+1}^{-1}}(a_i).&
\end{align*}
Below we denote the arrangement using the conjugation (i.e. the cyclic permutation) and the arrangement using the relation (i) by $\xrightarrow[]{C}$ and $\xrightarrow[]{(\mathrm{i})}$, respectively.
We recall the following relation:
\begin{align*}
&a_{i+1} \cdot a_i \sim b_i \cdot a_{i+1},& &\mathrm{and}& &a_i \cdot a_{i+1} \sim a_{i+1} \cdot \bar{b}_i.&
\end{align*}
In particular, we note that
\begin{align*}
&a_i \cdot a_j \sim a_j \cdot a_i& &\mathrm{for} \ |i-j|>1.&
\end{align*}

By drawing the curves, it is easy to verify that for $m=1,\ldots,k-1$ and $i=m,\ldots,k-1$, 
\begin{align*}
a_k a_{k-1} \cdots a_{m+1} a_m (a_{i+1})=a_i \ \ \ \mathrm{and} \ \ \ a_m a_{m+1} \cdots a_{k-1} a_k (a_i)=a_{i+1}. 
\end{align*}
Using the relation $t_{f(c)}=ft_cf^{-1}$, we obtain the followings:
\begin{align}
&(a_k a_{k-1} \cdots a_{m+1} a_m) \cdot a_{i+1} \sim a_i \cdot (a_k a_{k-1} \cdots a_{m+1} a_m), \label{1} \\
&(a_m a_{m+1} \cdots a_{k-1} a_k) \cdot a_i \sim a_{i+1} \cdot (a_m a_{m+1} \cdots a_{k-1} a_k). \label{2}
\end{align}

\begin{lem}\label{lem3.3}
For $2\leq k$, we have the following relations:
\begin{align*}
\mathrm{(a)}\hspace{10pt}&(a_{k-1} a_{k-2} \cdots a_2 a_1) \cdot (a_k a_{k-1} \cdots a_2 a_1) \sim a_k^k \cdot \bar{b}_{k-1} \cdots \bar{b}_2 \bar{b}_1, \\[3pt]
\mathrm{(b)}\hspace{10pt}&(a_1 a_2 \cdots a_{k-1} a_k) \cdot (a_1 a_2 \cdots a_{k-2} a_{k-1}) \sim b_1 b_2 \cdots b_{k-1} \cdot a_k^k, \\[3pt]
\end{align*}
\end{lem}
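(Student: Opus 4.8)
The plan is to reduce both identities to a single auxiliary \emph{square formula} for powers of a consecutive subchain, and then to prove that formula by a short induction driven by the relations \eqref{1} and \eqref{2}. Concretely, I would first isolate the claims that for $1\le p\le k$,
\[
(a_p a_{p+1}\cdots a_k)^2 \sim b_p b_{p+1}\cdots b_{k-1}\, a_k^{k-p+2}
\]
and
\[
(a_k a_{k-1}\cdots a_p)^2 \sim a_k^{k-p+2}\,\bar b_{k-1}\cdots \bar b_p .
\]
These two are mirror images of one another (reverse the order of every factor and exchange $b_i\leftrightarrow\bar b_i$), so it suffices to establish the first and transcribe.

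For the first square formula I would induct on the chain length $k-p+1$, the case $p=k$ being the tautology $a_k^2=a_k^2$. For the step, write $C=a_p a_{p+1}\cdots a_k$ and compute $C^2=C\cdot a_p a_{p+1}\cdots a_k$; pushing the block $C$ rightward across $a_p,\ldots,a_{k-1}$ by repeated use of \eqref{2} (each $C a_i\sim a_{i+1}C$, legal precisely because $i\le k-1$) turns these into $a_{p+1},\ldots,a_k$ and leaves exactly one trailing $a_k$, giving $C^2\sim(a_{p+1}\cdots a_k)\,C\,a_k$. Now extract the leading $a_p$ of the remaining $C$: since $a_p$ commutes with $a_{p+2},\ldots,a_k$ and $a_{p+1}a_p\sim b_p a_{p+1}$, one gets $(a_{p+1}\cdots a_k)a_p\sim b_p(a_{p+1}\cdots a_k)$, whence $C^2\sim b_p\,(a_{p+1}\cdots a_k)^2\,a_k$. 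Feeding in the inductive hypothesis for the shorter chain $a_{p+1},\ldots,a_k$ and collecting the commuting trailing copies of $a_k$ yields exactly $b_p b_{p+1}\cdots b_{k-1}a_k^{k-p+2}$.

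Granting the square formula, statement (b) follows in two moves. Starting from $(a_1\cdots a_k)(a_1\cdots a_{k-1})$, push the first block $a_1\cdots a_k$ rightward across the \emph{entire} second block by \eqref{2}; since every index there is $\le k-1$ this shifts $a_1\cdots a_{k-1}$ up to $a_2\cdots a_k$ with no trailing factor, producing $(a_2\cdots a_k)(a_1\cdots a_k)$. Extracting the $a_1$ as above gives $b_1(a_2\cdots a_k)^2$, and the square formula with $p=2$ (so that $k-p+2=k$) finishes it as $b_1 b_2\cdots b_{k-1}a_k^{k}$. For (a) I would run the mirror computation: push the \emph{second} block $a_k\cdots a_1$ leftward through $a_{k-1}\cdots a_1$ using \eqref{1} in the form $a_i(a_k\cdots a_1)\sim(a_k\cdots a_1)a_{i+1}$, reach $(a_k\cdots a_1)(a_k\cdots a_2)$, pull the trailing $a_1$ rightward via $a_1 a_2\sim a_2\bar b_1$ to obtain $(a_k\cdots a_2)^2\bar b_1$, and then apply the descending square formula with $p=2$.

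The arithmetic here is harmless; the one genuinely delicate point is the index bookkeeping in the block‑pushing, and in particular keeping track of the single trailing $a_k$ produced in the square‑formula step. This trailing factor is exactly what upgrades the exponent of $a_k$ from $k-1$ to $k$, and it is the easiest thing to lose (a naive recursion that drops it gives a spurious leftover $a_1$ and an off‑by‑one power). A secondary point worth stating explicitly is that every move used—commutations, braid relations, \eqref{1}, \eqref{2}, and the elementary transformations $a_{i+1}a_i\sim b_i a_{i+1}$ and $a_i a_{i+1}\sim a_{i+1}\bar b_i$—is a local rewriting that preserves the group element, with no global cyclic conjugation invoked, so the inductive hypothesis may be substituted into a subword without difficulty.
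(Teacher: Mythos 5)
Your argument is correct, and every move you use (the commutations, the block--pushing relations (\ref{1}) and (\ref{2}), and the elementary transformations $a_{i+1}a_i\sim b_i a_{i+1}$, $a_i a_{i+1}\sim a_{i+1}\bar b_i$) is a legitimate sequence of Hurwitz moves, so substitution into subwords is indeed unproblematic. Your route differs from the paper's in its decomposition: the paper inducts directly on $k$ in statement (a), peeling off the two top letters via $(a_j\cdots a_1)(a_{j+1}\cdots a_1)\sim a_j a_{j+1}\cdot(a_{j-1}\cdots a_1)(a_j\cdots a_1)$, accumulating the power on $a_j$ and promoting it to $a_{j+1}$ with (\ref{2}) at the end of each step, and then dismisses (b) as ``similar''; you instead isolate the more general square formula $(a_p\cdots a_k)^2\sim b_p\cdots b_{k-1}a_k^{\,k-p+2}$ for an arbitrary consecutive subchain, induct by removing the bottom generator (each step contributing one trailing $a_k$, which is exactly the right accounting for the exponent), and derive (a) and (b) uniformly from it. What your organization buys is a single statement that handles both parts and makes the exponent bookkeeping transparent; what it costs is one extra preliminary block--pushing reduction. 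The only place you are slightly cavalier is the word ``mirror'': reversal of a positive factorization is \emph{not} a symmetry of Hurwitz equivalence (the reversal of the elementary transformation $t_u t_v\sim t_{t_u(v)}t_u$ is $t_v t_u\sim t_u t_{t_u(v)}$, whereas the allowed move gives $t_u t_{t_u^{-1}(v)}$), so the descending formula $(a_k\cdots a_p)^2\sim a_k^{\,k-p+2}\bar b_{k-1}\cdots\bar b_p$ cannot be quoted as a formal consequence of the ascending one; it must be re-proved by running the analogous induction with (\ref{1}) in place of (\ref{2}) and $\bar b$ in place of $b$. Since you say you would ``transcribe'' and the transcription does go through step for step (push the head letters of the first copy rightward through the second to produce a leading $a_k$, then extract the trailing $a_p$ as $\bar b_p$ on the right), this is a presentational caveat rather than a gap.
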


\begin{proof}
The proof will be given by induction on $k$. Suppose that $k=2$. Then, we have
\begin{align*}
\red{a_1} a_2 a_1 \xrightarrow[]{(\ref{1})} a_2 a_1 \red{a_2} \sim a_2^2 \cdot \bar{b}_1. 
\end{align*}
Hence, the conclusion of the Lemma holds for $k=2$.

Let us assume inductively that the relation holds for $k=j$. 
Then, 
\begin{align*}
&(\red{a_j} a_{j-1} \cdots a_1) \cdot (\red{a_{j+1}} a_j \cdots a_1) \\
&\sim \red{a_j a_{j+1}} \cdot (a_{j-1} \cdots a_1) \cdot (a_j \cdots a_1) \\
&\sim a_j a_{j+1} \cdot \blue{a_j^{j+1}} \cdot \bar{b}_{j-1} \cdots  \bar{b}_1 \\
&\xrightarrow[]{(\ref{2})} \blue{a_{j+1}^{j+1}} \cdot a_j a_{j+1} \cdot \bar{b}_{j-1} \cdots  \bar{b}_1 \\
&\sim a_{j+1}^{j+1} \cdot a_{j+1} \cdot \bar{b}_j \cdot \bar{b}_{j-1} \cdots  \bar{b}_1. 
\end{align*}
This proves part (a). The proof of (b) is similar, and therefore omitted.  
 
\end{proof}

\begin{lem}\label{lem3.4}
Let $l\geq 0$. We define an element $\phi$ to be 
\begin{align*}
&\phi_{l}=a_{2l+1}^{l+1} a_{2l-1}^{l} \cdots a_5^3 a_3^2 a_1. 
\end{align*}
Let $D$ and $E$ be two products of right-handed Dehn twists and write them as $D=d_1\cdots d_{k_1}$ and $E=e_1\cdots e_{k_2}$, respectively. 
If a word $W_1$ is obtained by applying a sequence of the conjugation and the elementary transformations to a word $W_2$, then we denote it by $\sim_C$. 
For $l\geq 1$, we have the following:
\begin{align*}
\mathrm{(a)}\hspace{5pt}&D \cdot a_{2l} \cdots a_2 a_1 \cdot a_{2l+1} \cdots a_2 a_1 \cdot E \\
& \sim_C {}_{\phi_l}(D) \cdot (a_{2l+1}^{l+1} \cdot a_{2l} \cdots a_2 a_1) \cdot (b_{2l} \cdots b_4 \cdot b_2) \cdot {}_{\phi_l}(E), \\[3pt]
\mathrm{(b)}\hspace{5pt}&D \cdot a_1 a_2 \cdots a_{2l+1} \cdot a_1 a_2 \cdots a_{2l} \cdot E \\
& \sim_C {}_{\phi_l^{-1}}(D) \cdot (\bar{b}_2 \cdot \bar{b}_4 \cdots \bar{b}_{2l}) \cdot (a_1 a_2 \cdots a_{2l} \cdot a_{2l+1}^{l+1}) \cdot {}_{\phi_l^{-1}}(E), 
\end{align*}
where ${}_{\phi_l}(D)={}_{\phi_l}(d_1)\cdots {}_{\phi_{l}}(d_{k_1})$ and ${}_{\phi_l}(E)={}_{\phi_l}(e_1)\cdots {}_{\phi_{l}}(e_{k_2})$. 
\end{lem}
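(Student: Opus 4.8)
The plan is to prove part (a) by induction on $l$ and then to deduce part (b) from (a) by a word-reversal symmetry. Throughout, I keep in mind that the asserted equivalence is only up to $\sim_C$, i.e.\ up to elementary transformations \emph{and} cyclic permutation of the whole relator; the two conjugations ${}_{\phi_l}(D)$ and ${}_{\phi_l}(E)$ simply record the net element that has to be transported once around the word. The computational heart is therefore an identity on the middle block $M_l:=(a_{2l}\cdots a_2a_1)(a_{2l+1}\cdots a_2a_1)$. The first move I would make is the purely commutative rewriting obtained from $a_ia_j=a_ja_i$ for $|i-j|>1$ (moving $a_{2l+1}$ and then $a_{2l}$ leftward past the lower-index runs),
\[
M_l=a_{2l}\,a_{2l+1}\,a_{2l-1}\,a_{2l}\cdot M_{l-1},
\]
where $M_{l-1}=(a_{2l-2}\cdots a_1)(a_{2l-1}\cdots a_1)$. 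This exposes the level-$(l-1)$ middle block at the right-hand end, which is exactly what makes the induction run.

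For the base case $l=1$ one checks $M_1=a_2a_1a_3a_2a_1$ directly, using the braid relation and $a_ia_{i+1}\sim a_{i+1}\bar b_i$, the surplus power being transported by a single cyclic permutation to produce the conjugation by $\phi_1=a_3^2a_1$. For the inductive step ($l\geq 2$) I write $W=D\,M_l\,E=(D\,a_{2l}a_{2l+1}a_{2l-1}a_{2l})\,M_{l-1}\,E$ and apply the induction hypothesis (part (a) at level $l-1$) with $\hat D:=D\,a_{2l}a_{2l+1}a_{2l-1}a_{2l}$ and $\hat E:=E$. Since $\phi_{l-1}$ involves only $a_1,\dots,a_{2l-1}$, it commutes with $a_{2l+1}$ and fixes $a_{2l-1}$, while ${}_{\phi_{l-1}}(a_{2l})={}_{a_{2l-1}^{\,l}}(a_{2l})=:c$; substituting these into ${}_{\phi_{l-1}}(\hat D)$ leaves the short prefix $c\,a_{2l+1}\,a_{2l-1}\,c$ standing in front of $M'_{l-1}=a_{2l-1}^{\,l}(a_{2l-2}\cdots a_1)(b_{2l-2}\cdots b_2)$.

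It then remains to transform $c\,a_{2l+1}\,a_{2l-1}\,c\cdot a_{2l-1}^{\,l}(a_{2l-2}\cdots a_1)(b_{2l-2}\cdots b_2)$ into the target middle $a_{2l+1}^{\,l+1}(a_{2l}\cdots a_1)(b_{2l}b_{2l-2}\cdots b_2)$, while conjugating $D$ and $E$ by the extra factor $a_{2l+1}^{\,l+1}$. The mechanism mirrors the proof of Lemma~\ref{lem3.3}: I use \eqref{1} and \eqref{2} to migrate the power $a_{2l-1}^{\,l}$ up the chain into the power $a_{2l+1}^{\,l+1}$ (raising the index by $2$ and the exponent by $1$), which simultaneously creates the new half-twist $b_{2l}$ at the head of the $b$-string and lengthens the descending run $a_{2l-1}\cdots a_1$ to $a_{2l}\cdots a_1$. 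The resulting surplus $a_{2l+1}^{\,l+1}$ is then carried around the relator by a cyclic permutation, upgrading the conjugation ${}_{\phi_{l-1}}$ to ${}_{\phi_l}$; this is legitimate because $\phi_{l-1}$ and $a_{2l+1}$ commute, so $\phi_l=a_{2l+1}^{\,l+1}\phi_{l-1}=\phi_{l-1}a_{2l+1}^{\,l+1}$.

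Part (b) I would obtain without a second induction, by applying the word-reversing anti-automorphism $\alpha$ determined by $\alpha(t_{a_i})=t_{a_i}$. Because the defining braid and commutativity relations among the $t_{a_i}$ are palindromic, $\alpha$ is well defined; it sends $b_i\mapsto\bar b_i$, turns descending runs into ascending runs, interchanges the roles of \eqref{1} and \eqref{2}, and fixes $\phi_l$ (the latter being a product of twists about the pairwise-disjoint curves $a_1,a_3,\dots,a_{2l+1}$, so its factors commute and may be reversed freely). Applying $\alpha$ to the relation of (a) and relabeling $D,E$ then yields precisely (b), with $\phi_l$ replaced by $\phi_l^{-1}$. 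I expect the main obstacle to be the bookkeeping in the third paragraph: one must verify that the exponent migration produces \emph{exactly} $a_{2l+1}^{\,l+1}$ together with a single new $b_{2l}$ (and no stray twists), and that the conjugations inherited from the induction hypothesis recombine precisely into ${}_{\phi_l}$. This is the step where the specific shape of $\phi_l$, and the commutation relations that let $\phi_{l-1}$ slide past $a_{2l}$ and $a_{2l\pm1}$, are essential.
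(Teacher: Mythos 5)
Your overall plan---induction on $l$ via the decomposition $M_l=(a_{2l}a_{2l-1}\cdot a_{2l+1}a_{2l})\cdot M_{l-1}$, a final global conjugation by $a_{2l+1}^{l+1}$, and part (b) by word reversal---is the same as the paper's, and your reversal argument for (b) is legitimate (the braid and commutativity relations and the elementary transformations are all palindromic, and the reversal sends $b_i$ to $\bar b_i$ and fixes $\phi_l$). The genuine gap is exactly the point you flag as ``the main obstacle,'' and it is created by how you set up the induction. Because you induct on statement (a) itself, the inductive hypothesis conjugates the absorbed prefix by $\phi_{l-1}$, whose factor $a_{2l-1}^{l}$ does not commute with $a_{2l}$, producing the curve $c={}_{a_{2l-1}^{l}}(a_{2l})$. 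The word you must then still transform, $c\,a_{2l+1}\,a_{2l-1}\,c\cdot a_{2l-1}^{l}(a_{2l-2}\cdots a_1)(b_{2l-2}\cdots b_2)$, is no longer a product of consecutive descending runs, so the index-raising mechanism you invoke (pushing a power through an intact block of the form $a_{k}\cdots a_1\cdot a_{k+1}\cdots a_m$, which raises the index by two) has nothing to act on. The obvious moves stall: $c\cdot a_{2l-1}^{l}\sim a_{2l-1}^{l}\cdot a_{2l}$ followed by $a_{2l+1}a_{2l}=b_{2l}a_{2l+1}$ leaves you with $c\,a_{2l-1}^{l+1}\,b_{2l}\,(a_{2l-2}\cdots a_1)(b_{2l-2}\cdots b_2)\,a_{2l+1}$, where the exponent is stuck on $a_{2l-1}$, a stray $c$ remains, and only a single $a_{2l+1}$ has been produced. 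The two factorizations are in fact Hurwitz equivalent, but your sketch does not supply the moves, and the mechanism you name is not available in this configuration.

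The paper removes the obstacle by changing the statement that is proved by induction: it first reduces (a) to the shifted identity
\[
D\cdot(a_{2l}\cdots a_1)(a_{2l+1}\cdots a_1)\cdot E\ \sim\ {}_{\phi_{l-1}}(D)\cdot(a_{2l}\cdots a_1)(b_{2l}b_{2l-2}\cdots b_2)\,a_{2l+1}^{l+1}\cdot{}_{\phi_{l-1}}(E),
\]
in which the conjugator is $\phi_{l-1}$ (one level lower) and the surplus power sits at the right-hand end; a single global conjugation by $a_{2l+1}^{l+1}$ at the very end converts this to the stated form. In the inductive step of the shifted identity, the hypothesis conjugates the absorbed prefix $a_{2j+2}a_{2j+1}a_{2j+3}a_{2j+2}$ by $\phi_{j-1}$, which is supported on $a_1,\ldots,a_{2j-1}$ and therefore fixes the prefix outright; the power $a_{2j+1}^{j+1}$ then passes through the intact block $(a_{2j+2}a_{2j+1}a_{2j}\cdots a_1\cdot a_{2j+3}a_{2j+2})$ to become $a_{2j+3}^{j+1}$, the braid relation $a_{2j+3}a_{2j+2}=b_{2j+2}a_{2j+3}$ supplies both the new $b_{2j+2}$ and the extra power, and $a_{2j+3}^{j+2}$ commutes past $b_{2j}\cdots b_2$. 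Restructure your induction this way and your base case and your argument for (b) go through unchanged.
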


\begin{proof}
For $1\leq m\leq 2l-1$ and $m\leq i\leq 2l-1$, we have the following equalities from (\ref{1}) and (\ref{2}):
\begin{align}
&(a_{2l} \cdots  a_1 \cdot a_{2l+1} \cdots a_m) \cdot a_{i+2} \sim a_i \cdot (a_{2l} \cdots a_1 \cdot a_{2l+1} \cdots a_m) \label{3}\\
&a_{i+2} \cdot (a_m \cdots a_{2l+1} \cdot a_1 \cdots a_{2l}) \sim (a_m \cdots a_{2l+1} \cdot a_1 \cdots a_{2l}) \cdot a_i. \label{4}
\end{align}

We first show (a). Since 
\begin{align*}
&{}_{\phi_{l-1}}(D) \cdot (a_{2l} a_{2l-1} \cdots a_1) \cdot (b_{2l} b_{2l-2} \cdots b_2) \cdot \red{a_{2l+1}^{l+1}} \cdot {}_{\phi_{l-1}}(E) \\
&\xrightarrow[]{C} {}_{\red{a_{2l+1}^{l+1}}\phi_{l-1}}(D)\cdot \red{a_{2l+1}^{l+1}} \cdot (a_{2l} a_{2l-1} \cdots a_1) \cdot (b_{2l} b_{2l-2} \cdots b_2) \cdot {}_{\red{a_{2l+1}^{l+1}}\phi_{l-1}}(E) \\
&= {}_{\phi_l}(D)\cdot a_{2l+1}^{l+1} \cdot (a_{2l} a_{2l-1} \cdots a_1) \cdot (b_{2l} b_{2l-2} \cdots b_2) \cdot {}_{\phi_l}(E), 
\end{align*}
it is sufficient to prove 
\begin{align*}
&D \cdot (a_{2l} a_{2l-1} \cdots a_1) \cdot (a_{2l+1} a_{2l} \cdots a_1) \cdot E \\
&\sim {}_{\phi_{l-1}}(D) \cdot (a_{2l} a_{2l-1} \cdots a_1) \cdot (b_{2l} b_{2l-2} \cdots b_2)\cdot a_{2l+1}^{l+1} \cdot {}_{\phi_{l-1}}(E). 
\end{align*}
The proof is by induction on $l$. Suppose that $l=0$. Then, we have
\begin{align*}
&D \cdot a_2 a_1 \cdot a_3 a_2 \red{a_1} \cdot E \\
&\xrightarrow[]{C} {}_{\red{a_1}}(D) \cdot \red{a_1} \cdot a_2 a_1 \cdot a_3 a_2 \cdot {}_{\red{a_1}}(E) \\
&\xrightarrow[]{(\ref{3})} {}_{a_1}(D) \cdot a_2 a_1 \cdot a_3 a_2 \cdot \red{a_3} \cdot {}_{a_1}(E) \\
&\sim {}_{a_1}(D) \cdot a_2 a_1 \cdot b_2 \cdot a_3 \cdot a_3 \cdot {}_{a_1}(E). 
\end{align*}
Since $\phi_0=a_1$, the conclusion of the Lemma holds for $l=0$.

Let us assume, inductively, that the relation holds for $l=j$. 
Note that since $\phi_{j-1}(a_{2j+2})=a_{2j+2}$ and $\phi_{j-1}(a_{2j+3})=a_{2j+3}$, we have 
\begin{align*}
{}_{\phi_{j-1}}(D a_{2j+2} a_{2j+1} a_{2j+3} a_{2j+2})={}_{\phi_{j-1}}(D) a_{2j+2} a_{2j+1} a_{2j+3} a_{2j+2}. 
\end{align*}
Since $a_{2j+3}$ is disjoint from $b_2,b_4,\ldots,b_{2j}$ and $\phi_j=a_{2j+1}^{j+1}\phi_{j-1}$, we have
\begin{align*}
&D \cdot (\red{a_{2j+2} a_{2j+1}} a_{2j} \cdots a_1) \cdot (\red{a_{2j+3} a_{2j+2}} a_{2j+1} \cdots a_1) \cdot E \\
&\sim D \cdot \red{a_{2j+2} a_{2j+1} a_{2j+3} a_{2j+2}} \cdot (a_{2j} \cdots a_1) \cdot (a_{2j+1} \cdots a_1) \cdot E \\
&\sim {}_{\phi_{j-1}}(D) \cdot a_{2j+2} a_{2j+1} a_{2j+3} a_{2j+2} \cdot (a_{2j} \cdots a_1) \cdot (b_{2j} \cdots b_2) \cdot a_{2j+1}^{j+1} \cdot {}_{\phi_{j-1}}(E) \\
&\xrightarrow[]{C} {}_{\phi_j}(D) \cdot a_{2j+1}^{j+1} \cdot \red{a_{2j+2} a_{2j+1} a_{2j+3} a_{2j+2}} \cdot (a_{2j} \cdots a_1) \cdot (b_{2j} \cdots b_2) \cdot {}_{\phi_j}(E) \\
&\sim {}_{\phi_j}(D) \cdot a_{2j+1}^{j+1} \cdot (\red{a_{2j+2} a_{2j+1}} a_{2j} \cdots a_1) \cdot \red{a_{2j+3} a_{2j+2}} \cdot (b_{2j} \cdots b_2) \cdot {}_{\phi_j}(E) \\
&\xrightarrow[]{(\ref{3})} {}_{\phi_j}(D) \cdot (a_{2j+2} a_{2j+1} a_{2j} \cdots a_1) \cdot a_{2j+3} a_{2j+2} \cdot a_{2j+3}^{j+1} \cdot (b_{2j} \cdots b_2) \cdot {}_{\phi_j}(E) \\
&\sim {}_{\phi_j}(D) \cdot (a_{2j+2} a_{2j+1} a_{2j} \cdots a_1) \cdot b_{2j+2} \cdot a_{2j+3} \cdot a_{2j+3}^{j+1} \cdot (b_{2j} \cdots b_2) \cdot {}_{\phi_j}(E) \\
&\sim {}_{\phi_j}(D) \cdot (a_{2j+2} a_{2j+1} a_{2j} \cdots a_1) \cdot (b_{2j+2} b_{2j} \cdots b_2)\cdot a_{2j+3}^{j+2} \cdot {}_{\phi_j}(E). 
\end{align*}
This proves part (a) of the lemma. The proof of part (b) is similar and left to the reader.
\end{proof}

\section{A Lift of hyperelliptic relations}
In this section, we construct a relation which gives a lift of a relation, and which is Hurwitz equivalent to $I(g)$, from $\Gamma_g$ to $\Gamma_g^1$. This relation will be used in Section~\ref{new words}.

Suppose $g\geq 2$. 
Let $\Sigma_g^n$ be the surface of genus $g$ with $b$ boundary components $\delta_1,\delta_2,\ldots,\delta_n$. 
Let $\alpha_1,\alpha_2,\ldots,\alpha_{2g},\alpha_{2g+1}, \alpha_{2g+1}^\prime$ and $\delta, \zeta_1,\ldots,\zeta_n$ be the simple closed curves as shown in Figure~\ref{curves}. 
\begin{figure}[hbt]
 \centering
      \includegraphics[width=7cm]{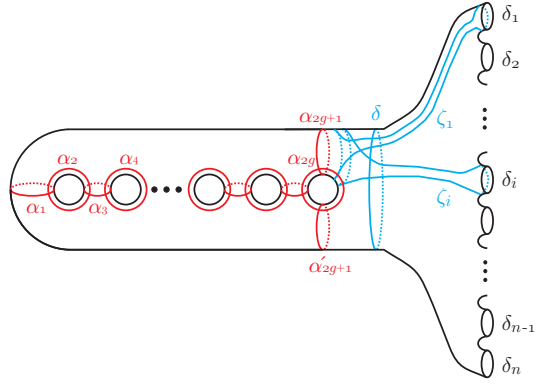}
      \caption{The curves $\alpha_1,\ldots,\alpha_{2g+1},\alpha_{2g+1}^\prime, \delta, \zeta_1,\ldots,\zeta_n$ on $\Sigma_g^n$ and the boundary components $\delta_1,\ldots,\delta_n$ of $\Sigma_g^n$.}
      \label{curves}
 \end{figure}

\begin{lem}\label{lem1}
The following relation holds in $\Gamma_g^n$:
\begin{align*}
(\alpha_1 \alpha_2 \cdots \alpha_{2g})^{2g+1} = (\alpha_1 \alpha_2 \cdots \alpha_{2g-1})^{2g} \cdot \alpha_{2g} \cdots \alpha_2  \alpha_1 \alpha_1 \alpha_2 \cdots \alpha_{2g}. 
\end{align*}
\end{lem}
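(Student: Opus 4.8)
The plan is to recognize the asserted relation as the image, under the natural homomorphism from a braid group into $\Gamma_g^n$, of the classical recursive expression for the full twist. Throughout I write $\alpha_i$ for the Dehn twist $t_{\alpha_i}$, and set $P=\alpha_1\alpha_2\cdots\alpha_{2g}$, $\bar P=\alpha_{2g}\cdots\alpha_2\alpha_1$, and $Q=\alpha_1\alpha_2\cdots\alpha_{2g-1}$, so that the claim reads $P^{2g+1}=Q^{2g}\,\bar P\,P$. Since the curves $\alpha_1,\ldots,\alpha_{2g}$ form a chain (consecutive curves meet once, non-consecutive ones are disjoint), Lemma \ref{com&braid.lem} shows that the twists $\alpha_1,\ldots,\alpha_{2g}$ satisfy exactly the braid relations $\alpha_i\alpha_j=\alpha_j\alpha_i$ for $|i-j|\geq 2$ and $\alpha_i\alpha_{i+1}\alpha_i=\alpha_{i+1}\alpha_i\alpha_{i+1}$. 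Consequently the assignment $\sigma_i\mapsto\alpha_i$ defines a homomorphism $\rho\colon B_{2g+1}\to\Gamma_g^n$ from the braid group on $2g+1$ strands, where $\sigma_1,\ldots,\sigma_{2g}$ are the standard generators.

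First I would record the two standard facts about the full twist $\Delta_m^2$ (the generator of the center of $B_m$): namely $\Delta_m^2=(\sigma_1\sigma_2\cdots\sigma_{m-1})^{m}$, and the recursion $\Delta_m^2=\Delta_{m-1}^2\cdot(\sigma_{m-1}\cdots\sigma_1)(\sigma_1\cdots\sigma_{m-1})$, which expresses the full twist on $m$ strands as the full twist on the first $m-1$ strands followed by the pure braid that carries the last strand once around all the others. Applying these with $m=2g+1$, and using $(\sigma_1\cdots\sigma_{2g-1})^{2g}=\Delta_{2g}^2$ for the leading factor, gives the equality $(\sigma_1\cdots\sigma_{2g})^{2g+1}=(\sigma_1\cdots\sigma_{2g-1})^{2g}(\sigma_{2g}\cdots\sigma_1)(\sigma_1\cdots\sigma_{2g})$ in $B_{2g+1}$. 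Pushing this through $\rho$ yields precisely $P^{2g+1}=Q^{2g}\,\bar P\,P$, the assertion. Equivalently, and more in keeping with the computational style of Lemmas \ref{lem3.3} and \ref{lem3.4}, one may cancel the rightmost factor $P$ and instead prove the reduced identity $P^{2g}=Q^{2g}\,\bar P$ directly by induction, repeatedly invoking the shift relations \eqref{1} and \eqref{2} to migrate the copies of $\alpha_{2g}$ to the right.

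The main obstacle is the verification of the recursion $\Delta_m^2=\Delta_{m-1}^2\,(\sigma_{m-1}\cdots\sigma_1)(\sigma_1\cdots\sigma_{m-1})$ — equivalently, in the direct approach, showing that when the copies of $\alpha_{2g}$ in $P^{2g}$ are swept past the intervening blocks $\alpha_1\cdots\alpha_{2g-1}$, the resulting conjugates telescope back into the single descending run $\bar P=\alpha_{2g}\cdots\alpha_1$. This collapse is exactly where the braid relation (rather than mere commutativity) is consumed, and it is the only step requiring genuine care; the existence of $\rho$, the cancellation, and the identification of the three factors are all formal. I would organize the induction so that at each stage one new conjugate is absorbed by a single application of \eqref{1}, mirroring the short computation $(\alpha_1\alpha_2)^3=\alpha_1^2\alpha_2\alpha_1^2\alpha_2$ that settles the shortest chain and exhibits the mechanism in miniature.
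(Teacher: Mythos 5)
Your proposal is correct and follows essentially the same route as the paper, which simply observes that the identity is a consequence of the braid relations satisfied by the chain $\alpha_1,\ldots,\alpha_{2g}$; you make this precise by recognizing it as the image under $\sigma_i\mapsto t_{\alpha_i}$ of the classical full-twist recursion $\Delta_{2g+1}^2=\Delta_{2g}^2\,(\sigma_{2g}\cdots\sigma_1)(\sigma_1\cdots\sigma_{2g})$ in $B_{2g+1}$. The paper gives no more detail than "follows from the braid relations," so your identification of the underlying braid identity (and the sanity check $(\alpha_1\alpha_2)^3=\alpha_1^2\alpha_2\alpha_1^2\alpha_2$) supplies exactly the content the paper leaves implicit.
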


\begin{proof}
The proof follows from the braid relations, $\alpha_i \cdot \alpha_{i+1} \cdot \alpha_i = \alpha_{i+1} \cdot \alpha_i \cdot \alpha_{i+1}$ and $\alpha_i \cdot \alpha_j= \alpha_j \cdot \alpha_i$ for $|i-j|>1$. 
\end{proof}

\begin{lem}\label{lem2}
The following relation holds in $\Gamma_g^n$:
\begin{align*}
\delta=(\alpha_{2g+1}  \alpha_{2g}  \cdots \alpha_2 \alpha_1 \alpha_1 \alpha_2 \cdots \alpha_{2g} \alpha_{2g+1}^\prime)^2
\end{align*}
In particular, since $\delta_1=\delta$ for $n=1$, this relation is a lift of $H(g)$ from $\Gamma_g$ to $\Gamma_g^1$. 
\end{lem}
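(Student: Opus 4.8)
The plan is to read the geometry off Figure~\ref{curves} and then verify the identity by the same braid-relation bookkeeping used in Lemmas~\ref{lem3.3}, \ref{lem3.4} and \ref{lem1}. Conceptually, the word $w:=\alpha_{2g+1}\alpha_{2g}\cdots\alpha_1\alpha_1\cdots\alpha_{2g}\alpha_{2g+1}^\prime$ is a lift to $\Sigma_g^n$ of the hyperelliptic involution $\iota$ of $\Sigma_g$: the curves $\alpha_1,\ldots,\alpha_{2g}$ form a chain and $\alpha_{2g+1},\alpha_{2g+1}^\prime$ are the two ``fork'' curves meeting $\alpha_{2g}$, so that after capping $\delta$ both project to the single curve $c_{2g+1}$ and $w$ projects to a cyclic conjugate of $c_1\cdots c_{2g}c_{2g+1}^2c_{2g}\cdots c_1$. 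Granting the computation $w^2=t_\delta$, the ``in particular'' statement is immediate: capping $\delta=\delta_1$ (for $n=1$) kills $t_\delta$ and turns $w^2=t_\delta$ into a word Hurwitz equivalent to $H(g)=1$, so $\delta=w^2$ is exactly the asserted lift of $H(g)$ from $\Gamma_g$ to $\Gamma_g^1$.

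To prove $w^2=t_\delta$ I would work entirely with the commutativity and braid relations of Lemma~\ref{com&braid.lem}. First I would use the palindrome identity behind Lemma~\ref{lem1}, in the form $(\alpha_1\cdots\alpha_k)^{k+1}=(\alpha_1\cdots\alpha_{k-1})^{k}\,\alpha_k\cdots\alpha_1\alpha_1\cdots\alpha_k$, to recognize the interior block $\alpha_{2g}\cdots\alpha_1\alpha_1\cdots\alpha_{2g}$ of $w$ as a chain expression; this lets me rewrite $w$, and hence $w^2$, as a product matching the left-hand side of a genuine chain relation. I would then invoke the chain relation on the genus-$g$ subsurface with boundary $\delta$ filled by $\alpha_1,\ldots,\alpha_{2g+1},\alpha_{2g+1}^\prime$ to collapse the accumulated product of right-handed Dehn twists to the single boundary twist $t_\delta$. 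The individual steps are routine cyclic permutations together with repeated applications of $a_{i+1}a_i\sim {}_{a_{i+1}}(a_i)\,a_{i+1}$ and $a_ia_{i+1}\sim a_{i+1}\,\bar b_i$, exactly as in the proofs preceding this lemma.

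The main obstacle is the careful handling of the two fork curves $\alpha_{2g+1}$ and $\alpha_{2g+1}^\prime$, which are genuinely distinct in $\Gamma_g^n$ and become isotopic only after $\delta$ is capped. In reducing $w^2=\alpha_{2g+1}\cdots\alpha_{2g+1}^\prime\,\alpha_{2g+1}\cdots\alpha_{2g+1}^\prime$ I must keep $\alpha_{2g+1}$ and $\alpha_{2g+1}^\prime$ in their correct positions, commute the inner occurrences of $\alpha_{2g+1}^\prime$ and $\alpha_{2g+1}$ past each other (they are disjoint), and fuse the two fork contributions via the braid relation with $\alpha_{2g}$. The delicate point is to check that this fusion produces precisely the boundary twist $t_\delta$ rather than a twist about some intermediate separating curve. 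In short, the braid moves themselves are mechanical, but verifying that the single surviving curve is isotopic to $\delta$ — the bookkeeping of the boundary, for which the explicit picture in Figure~\ref{curves} is essential — is where the real care is required.
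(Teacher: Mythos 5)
Your overall strategy is the paper's own: Lemma~\ref{lem1} (your ``palindrome identity'' with $k=2g$) combined with chain relations, followed by commuting the disjoint twists $t_{\alpha_{2g+1}}$ and $t_{\alpha_{2g+1}'}$ (equivalently, conjugating by $t_{\alpha_{2g+1}'}$, which fixes the central element $t_\delta$). The concrete gap is that you only ever invoke one chain relation --- $t_\delta=(t_{\alpha_1}t_{\alpha_2}\cdots t_{\alpha_{2g}})^{4g+2}$ for the even-length chain --- whereas the argument needs a second one: $t_{\alpha_{2g+1}}t_{\alpha_{2g+1}'}=(t_{\alpha_1}t_{\alpha_2}\cdots t_{\alpha_{2g-1}})^{2g}$, coming from the fact that $\alpha_{2g+1}$ and $\alpha_{2g+1}'$ are precisely the \emph{two boundary components of a regular neighborhood of the odd-length chain} $\alpha_1\cup\cdots\cup\alpha_{2g-1}$ (a genus $g-1$ subsurface with two boundary circles). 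It is this identity, and not any braid move, that absorbs the two ``fork'' twists into the word $(\alpha_1\cdots\alpha_{2g})^{2g+1}$. Your proposed substitute --- ``fuse the two fork contributions via the braid relation with $\alpha_{2g}$'' --- would fail, and your opening claim that the computation can be carried out ``entirely with the commutativity and braid relations of Lemma~\ref{com&braid.lem}'' is not tenable: both chain relations are genuinely extra geometric input (verified from the figure, e.g.\ by the Alexander method), not consequences of disjointness and braiding.

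Once both chain relations are on the table, the worry in your last paragraph evaporates; there is no ``fusion'' step and no risk of producing a twist about an intermediate separating curve. Writing $W=\alpha_{2g}\cdots\alpha_1\alpha_1\cdots\alpha_{2g}$, Lemma~\ref{lem1} gives $(\alpha_1\cdots\alpha_{2g})^{2g+1}=(\alpha_1\cdots\alpha_{2g-1})^{2g}\,W=\alpha_{2g+1}\alpha_{2g+1}'\,W$, hence $t_{\alpha_{2g+1}'}\,w\,t_{\alpha_{2g+1}'}^{-1}=\alpha_{2g+1}'\alpha_{2g+1}W=\alpha_{2g+1}\alpha_{2g+1}'W=(\alpha_1\cdots\alpha_{2g})^{2g+1}$, so $w^2$ is conjugate to $(\alpha_1\cdots\alpha_{2g})^{4g+2}=t_\delta$; since $t_\delta$ commutes with $t_{\alpha_{2g+1}'}$, we get $w^2=t_\delta$ on the nose. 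Your ``in particular'' paragraph (capping $\delta_1$ sends $t_{\delta_1}\mapsto 1$ and $w^2$ to a cyclic conjugate of $H(g)$) is fine as stated.
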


\begin{proof}
A regular neighborhood of $\alpha_1\cup \alpha_2 \cup\cdots \cup a_{2g-1}$ is a subsurface of genus $g-1$ with two boundary components, $\alpha_{2g+1}$ and $\alpha_{2g+1}^\prime$. 
Moreover, a regular neighborhood of $\alpha_1\cup \alpha_2 \cup\cdots \cup a_{2g}$ is a subsurface of genus $g$ with one boundary component $\delta$. 
Then, it is well know that the following relations, callled the \textit{chain relations} hold: 
\begin{align*}
&\alpha_{2g+1} \alpha_{2g+1}^\prime = (\alpha_1 \cdots \alpha_{2g-1})^{2g}, 
&\delta = (\alpha_1 \alpha_2 \alpha_3 \cdots \alpha_{2g})^{4g+2}.\\
\end{align*}
By applying the relations above and Lemma~\ref{lem1}, we obtain the following relation. 
\begin{align*}
\delta=(\alpha_{2g+1} \alpha_{2g+1}^\prime \alpha_{2g} \cdots \alpha_2 \alpha_1 \alpha_1 \alpha_2 \cdots \alpha_{2g})^2.
\end{align*}
Since $\alpha_{2g+1}^\prime$ is disjoint from $\alpha_{2g+1}$, by conjugation of $\alpha_{2g+1}^\prime$, we obtain the claim. 
If $n=1$, then it is easily seen that this is a lift of $H(g)$ from $\Gamma_g$ to $\Gamma_g^1$. 
This completes the proof. 
\end{proof}

\begin{lem}\label{lem3}
The following relation holds in $\Gamma_g^n$. 
\begin{align*}
&(\alpha_{2g+1}  \alpha_{2g}  \cdots \alpha_2 \alpha_1 \alpha_1 \alpha_2 \cdots \alpha_{2g} \alpha_{2g+1}^\prime)^2 \\
& \ \sim (\alpha_{2g+1} \alpha_{2g} \cdots \alpha_2 \alpha_1)^2 \cdot (\alpha_1 \alpha_2 \cdots \alpha_{2g} \alpha_{2g+1}) \cdot (\alpha_1 \alpha_2 \cdots \alpha_{2g} \alpha_{2g+1}^\prime). 
\end{align*}
\end{lem}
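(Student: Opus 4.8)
The plan is to strip off the outer factors and reduce the statement to a single commutation-type Hurwitz equivalence. Write $P=\alpha_{2g+1}\alpha_{2g}\cdots\alpha_1$, $Q=\alpha_1\alpha_2\cdots\alpha_{2g}\alpha_{2g+1}^\prime$, and $S=\alpha_1\alpha_2\cdots\alpha_{2g}\alpha_{2g+1}$, so that the left-hand side is $(PQ)^2=P\,(QP)\,Q$ while the right-hand side is $P\,(PS)\,Q$. Since an elementary transformation performed inside a contiguous subword of a relator is again an elementary transformation of the whole relator, it suffices to prove $QP\sim PS$, the outer $P$ on the left and $Q$ on the right being carried along as spectators. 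Setting $R:=\alpha_1\cdots\alpha_{2g}$ and $R^\prime:=\alpha_{2g}\cdots\alpha_1$, I would first record the regroupings $S=R\alpha_{2g+1}$ and, after commuting the two disjoint last-handle curves $\alpha_{2g+1}^\prime$ and $\alpha_{2g+1}$, $QP=R\alpha_{2g+1}^\prime\alpha_{2g+1}R^\prime=S\,\alpha_{2g+1}^\prime R^\prime$ and $PS=\alpha_{2g+1}R^\prime S$. Thus the task becomes the equivalence $S\,\alpha_{2g+1}^\prime R^\prime\sim\alpha_{2g+1}R^\prime S$.

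Before searching for the moves, I would confirm that this is the correct target at the level of group elements. Combining Lemma~\ref{lem1} with the chain relation $\alpha_{2g+1}\alpha_{2g+1}^\prime=(\alpha_1\cdots\alpha_{2g-1})^{2g}$ used in the proof of Lemma~\ref{lem2} gives $R^{2g+1}=\alpha_{2g+1}\alpha_{2g+1}^\prime R^\prime R$. A short computation then yields $QP=R^{2g+1}$ and $PS=(\alpha_{2g+1}^\prime)^{-1}R^{2g+1}\alpha_{2g+1}$, so that $QP=PS$ is equivalent to the assertion that the mapping class $R^{2g+1}$ conjugates $\alpha_{2g+1}$ to $\alpha_{2g+1}^\prime$, i.e. that $R^{2g+1}$ realizes the hyperelliptic symmetry interchanging the two curves on the last handle. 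This is consistent with Lemma~\ref{lem2}, since $R^{4g+2}=\delta$ is the central boundary twist and $(R^{2g+1})^2=\delta$ fixes both $\alpha_{2g+1}$ and $\alpha_{2g+1}^\prime$.

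With the target confirmed, I would exhibit $QP\sim PS$ by an explicit chain of braid and commutation relations together with the shift relations \eqref{1} and \eqref{2}. The relevant geometric input is that $\alpha_{2g+1}^\prime$ is disjoint from $\alpha_1,\ldots,\alpha_{2g-1}$ and from $\alpha_{2g+1}$, and satisfies the braid relation with $\alpha_{2g}$ exactly as $\alpha_{2g+1}$ does. The idea is to slide the twist about $\alpha_{2g+1}^\prime$ leftward through $R^\prime$ and $S$ by these braid moves so that it trades places with $\alpha_{2g+1}$ and the chain word reassembles as $\alpha_{2g+1}R^\prime S$. This repeats, at the level of a single relator, the inductive bookkeeping already carried out in Lemmas~\ref{lem3.3} and \ref{lem3.4}.

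The hard part will be precisely this last step: realizing the abstract identity $QP=PS$ by a sequence of \emph{positive} elementary transformations rather than by a formal group-element manipulation. Pushing $t_{\alpha_{2g+1}^\prime}$ across the descending block $R^\prime$ and the ascending block $S$ produces intermediate vanishing cycles that are conjugates of the $\alpha_i$ (the curves $b_i,\bar b_i$ of Section~4), and one must verify that these recombine to leave exactly $\alpha_1,\ldots,\alpha_{2g},\alpha_{2g+1}$ in the correct order with no leftover twist. Because the identity is proved in $\Gamma_g^n$ rather than in the closed surface, extra care is required to ensure that the chain relations invoked along the way introduce no uncancelled boundary twists $t_{\delta_i}$; keeping track of the two genuinely distinct lifts $\alpha_{2g+1}$ and $\alpha_{2g+1}^\prime$ of $c_{2g+1}$ is exactly what makes this bookkeeping delicate.
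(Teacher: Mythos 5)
Your reduction is sound as far as it goes: writing the two sides as $P\,(QP)\,Q$ and $P\,(PS)\,Q$, noting that elementary transformations performed inside a contiguous subword extend to the whole relator, and verifying at the level of mapping classes that $QP=PS$ (equivalently, that $R^{2g+1}$ carries $\alpha_{2g+1}$ to $\alpha_{2g+1}'$) are all correct. But the lemma asserts a Hurwitz equivalence of positive factorizations, not an equality of group elements, and two positive factorizations of the same mapping class need not be Hurwitz equivalent; so your ``sanity check'' does not advance the proof. The proposal stops exactly where the content of the lemma begins: you never exhibit the sequence of elementary transformations realizing $QP\sim PS$, and you yourself flag this as ``the hard part.'' As written this is a plan plus a consistency check, not a proof.

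The missing idea is that no curve other than $\alpha_1,\ldots,\alpha_{2g},\alpha_{2g+1},\alpha_{2g+1}'$ ever needs to appear. The paper's proof rests on two facts checked by drawing pictures: the word $W=\alpha_{2g}\cdots\alpha_1\alpha_1\cdots\alpha_{2g}$ maps $\alpha_{2g+1}'$ to $\alpha_{2g+1}$, and the word $V=\alpha_{2g+1}W\alpha_{2g+1}$ fixes each $\alpha_i$ for $i\leq 2g$. These yield the block Hurwitz moves $W\cdot\alpha_{2g+1}'\sim\alpha_{2g+1}\cdot W$ and $V\cdot\alpha_i\sim\alpha_i\cdot V$ (relations \eqref{6} and \eqref{5} in the paper), and two applications transform $(\alpha_{2g+1}W\alpha_{2g+1}')^2$ into $P\cdot V\cdot Q=P\cdot PS\cdot Q$ directly: first the middle $\alpha_{2g+1}'$ is converted into an $\alpha_{2g+1}$ by sliding it left across $W$ in one block, then the block $\alpha_{2g}\cdots\alpha_1$ is commuted leftward past $V$. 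In particular no conjugated curves $b_i,\bar b_i$ arise and no chain relation (hence no boundary-twist bookkeeping) is invoked anywhere in this lemma; your worries on both counts indicate you were headed toward a letter-by-letter slide of $t_{\alpha_{2g+1}'}$, which does produce such curves and which you would then have to cancel by hand. Isolate and prove the two conjugation facts above and the argument closes in a few lines.
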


\begin{proof}
By drawing picture, we find that for each $i=1,2,\ldots,2g$, 
\begin{align*}
&\alpha_{2g+1} \cdots \alpha_2 \alpha_1 \alpha_1 \alpha_2 \cdots \alpha_{2g+1} (\alpha_i) = \alpha_i, \\
&\alpha_{2g} \cdots \alpha_2 \alpha_1 \alpha_1 \alpha_2 \cdots \alpha_{2g} (\alpha_{2g+1}^\prime) = \alpha_{2g+1}. 
\end{align*}
These give the following relations. 
\begin{align}
&\alpha_{2g+1} \cdots \alpha_2 \alpha_1 \alpha_1 \alpha_2 \cdots \alpha_{2g+1} \cdot \alpha_i 
\sim \alpha_i \cdot \alpha_{2g+1} \cdots \alpha_2 \alpha_1 \alpha_1 \alpha_2 \cdots \alpha_{2g+1}, \label{5}\\
&\alpha_{2g} \cdots \alpha_2 \alpha_1 \alpha_1 \alpha_2 \cdots \alpha_{2g} \cdot \alpha_{2g+1}^\prime 
\sim \alpha_{2g+1} \cdot \alpha_{2g} \cdots \alpha_2 \alpha_1 \alpha_1 \alpha_2 \cdots \alpha_{2g}. \label{6}
\end{align}
From these relations, we have 
\begin{align*}
&\alpha_{2g+1}\alpha_{2g} \cdots \alpha_2 \alpha_1 \alpha_1 \alpha_2 \cdots \alpha_{2g} \red{\alpha_{2g+1}^\prime} \alpha_{2g+1} \alpha_{2g} \cdots \alpha_2 \alpha_1 \alpha_1 \alpha_2 \cdots \alpha_{2g} \alpha_{2g+1}^\prime \\
&\xrightarrow[]{(\ref{6})} \alpha_{2g+1} \cdot \red{\alpha_{2g+1}} \alpha_{2g} \cdots \alpha_2 \alpha_1 \alpha_1 \alpha_2 \cdots \alpha_{2g} \alpha_{2g+1} \cdot \blue{\alpha_{2g} \cdots \alpha_2 \alpha_1} \cdot \alpha_1 \alpha_2 \cdots \alpha_{2g} \alpha_{2g+1}^\prime. \\
&\xrightarrow[]{(\ref{5})} (\alpha_{2g+1}\blue{\alpha_{2g} \cdots \alpha_2 \alpha_1}) \cdot (\alpha_{2g+1} \alpha_{2g} \cdots \alpha_2 \alpha_1 \alpha_1 \alpha_2 \cdots \alpha_{2g} \alpha_{2g+1}) \cdot (\alpha_1 \alpha_2 \cdots \alpha_{2g} \alpha_{2g+1}^\prime). 
\end{align*}
This completes the proof. 
\end{proof}

\section{New words in the mapping class group via daisy relation}\label{new words} We define $\phi$ in $\Gamma_g^n$ to be 
\begin{align*}
&\phi=\alpha_{2g+1}^{g+1} \alpha_{2g-1}^{g} \cdots \alpha_5^3 \alpha_3^2 \alpha_1. 
\end{align*}
Note that $\phi(\alpha_{2i-1}) = \alpha_{2i-1}$ for each $i=1,\ldots,g+1$ and $\phi(\alpha_{2g+1}^\prime)=\alpha_{2g+1}^\prime$. 
For simplicity of notation, we write 
\begin{align*}
&\beta_i={}_{\alpha_{i+1}}(\alpha_i),& &\bar{\beta}_i={}_{\alpha_{i+1}^{-1}}(\alpha_i),& &\gamma_{i+1}={}_{\alpha_i}(\alpha_{i+1}),&  &\bar{\gamma}_{i+1}={}_{\alpha_i^{-1}}(\alpha_{i+1}).&
\end{align*}
We denote by $\varphi$, $d_i$, $\bar{d}_i$, $e_{i+1}$ and $\bar{e}_{i+1}$ the images of $\phi$, $\beta_i$, $\bar{\beta}_i$, $\gamma_{i+1}$ and $\bar{\gamma}_{i+1}$ under the map $\Gamma_g^n \to \Gamma_g$, that is, 
\begin{align*}
&\varphi=c_{2g+1}^{g+1} c_{2g-1}^{g} \cdots c_5^3 c_3^2 c_1, 
\end{align*}
and 
\begin{align*}
&d_i={}_{c_{i+1}}(c_i),& &\bar{d}_i={}_{c_{i+1}^{-1}}(c_i),& &e_{i+1}={}_{c_i}(c_{i+1})& &\mathrm{and}& &\bar{e}_{i+1}={}_{c_i^{-1}}(c_{i+1})&
\end{align*}
Then, note that $\varphi(c_{2i-1}) = c_{2i-1}$ for each $i=1,\ldots,g+1$. 
If a word $W_1$ is obtained by applying simultaneous conjugations by $\psi$ to a word $W_2$, then we denote it by $\xrightarrow[]{\psi}$.

Let $x_1,\ldots,x_g$, $x_1^\prime,\ldots,x_g^\prime$ and $y_1,\ldots,y_g$ be the simple closed curves on $\Sigma_g$ given in Figure~\ref{curves3}. 
Moreover, we define $y_{g+1},\ldots,y_{2g-1}$ to be $x_2,\ldots,x_g$, respectively. 
We take the following two daisy relators of type $g-1$ in $\Gamma_g$:
\begin{align*}
&D_{g-1} := c_1^{-1} c_3^{-1} \cdots c_{2g-1}^{-1} \cdot c_{2g+1}^{-(g-2)} \cdot x_1 x_2 \cdots x_g & &\mathrm{and}& \\
&D_{g-1}^\prime := c_1^{-1} c_3^{-1} \cdots c_{2g-1}^{-1} \cdot c_{2g+1}^{-(g-2)} \cdot x_1^\prime x_2^\prime \cdots x_g^\prime,& &&
\end{align*}
and the following daisy relator of type $2(g-1)$:

\begin{align*}
D_{2(g-1)} &:= c_{2g+1}^{-1} c_{2g-1}^{-1} \cdots c_5^{-1} c_3^{-1} \cdot c_3^{-1} c_5^{-1} \cdots c_{2g-1}^{-1} \cdot c_{2g+1}^{-2g+3} \cdot y_1 y_2 \cdots y_{2g-1}\\
&= c_3^{-2} c_5^{-2} \cdots c_{2g-1}^{-2} \cdot c_{2g+1}^{-2g+2} \cdot y_1 y_2 \cdots y_{2g-1}. 
\end{align*}

\begin{figure}[hbt]
 \centering
      \includegraphics[width=7cm]{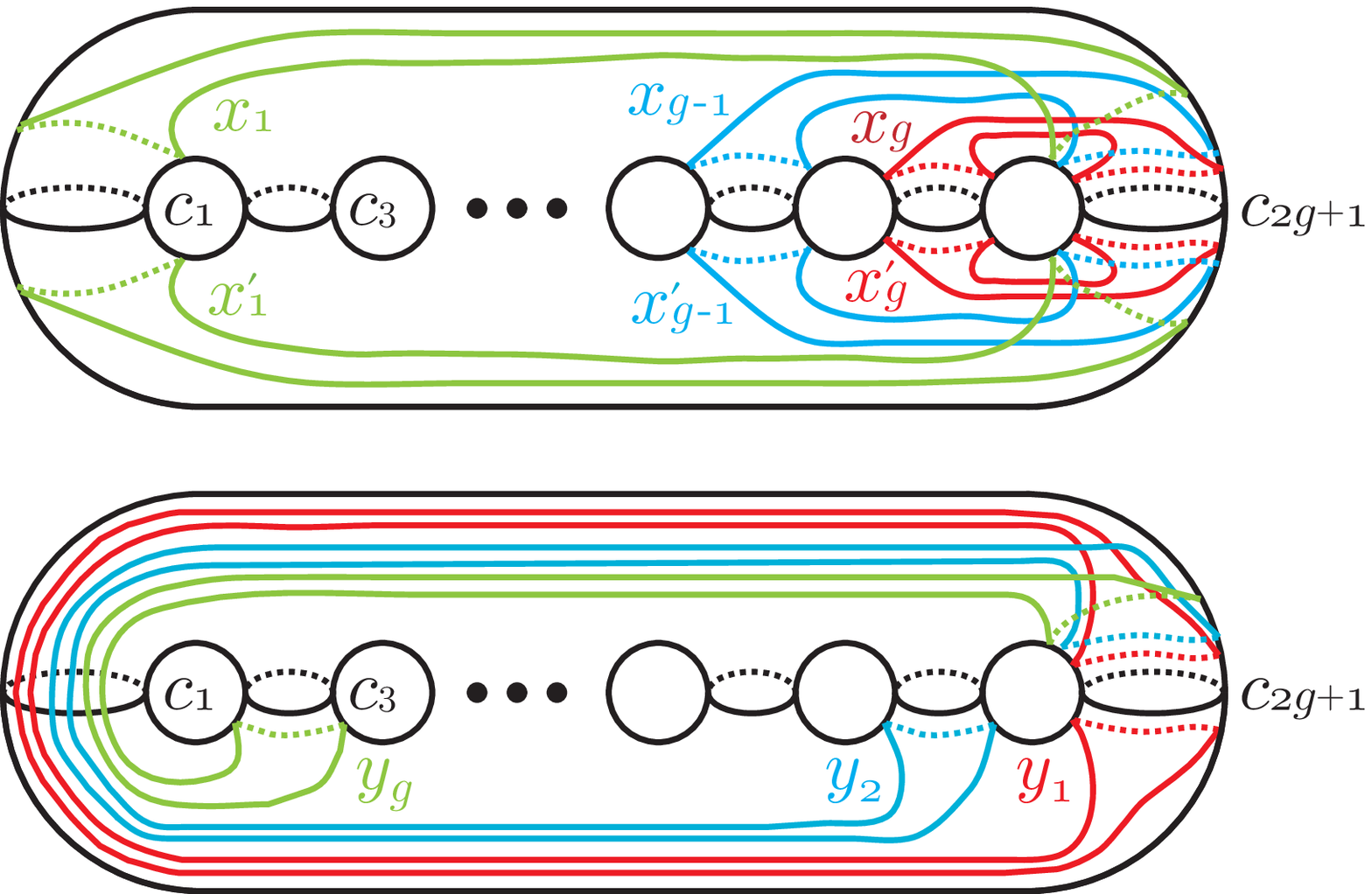}
      \caption{The curves $x_1,\ldots,x_{g}$,$x_1^\prime,\ldots,x_g^\prime$ and $y_1,\ldots,y_g$ on $\Sigma_g$.}
      \label{curves3}
 \end{figure}

Let $\chi_1,\ldots,\chi_g$ be the simple closed curves on $\Sigma_g^n$ given in Figure~\ref{curves2}. 
We denote by $\mathcal{D}_{g-1}$ the following daisy relator of type $g-1$ in $\Gamma_g^n$: 
\begin{align*}
\mathcal{D}_{g-1}= \alpha_1^{-1} \alpha_3^{-1} \cdots \alpha_{2g-1}^{-1} \cdot \alpha_{2g+1}^{-(g-2)} \cdot \chi_1 \chi_2 \cdots \chi_g. 
\end{align*}
Since it is easily seen that $\alpha_i$ and $\chi_i$ are mapped to $c_i$ and $x_i$ under the map $\Gamma_g^n \to \Gamma_g$, we see that the image of this map of $\mathcal{D}_{g-1}$ is $D_{g-1}$. 
\begin{figure}[hbt]
 \centering
      \includegraphics[width=7cm]{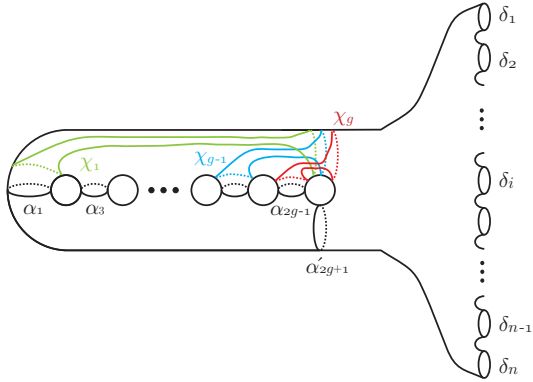}
      \caption{The curves $\chi_1,\ldots,\chi_g$ on $\Sigma_g^n$.}
      \label{curves2}
 \end{figure}

\begin{thm}\label{4.1}There is a positive relator 
\begin{align*}
H(g,1)= {}_{\varphi^{-1}}(\bar{d}_{2g}) \cdots {}_{\varphi^{-1}}(\bar{d}_2) {}_{\varphi^{-1}}(\bar{d}_1) \bar{d}_2 \bar{d}_4 \cdots \bar{d}_{2g} e_2 e_4 \cdots e_{2g} x_1 x_2 \cdots x_g c_{2g+1}^{2g+6}, 
\end{align*}
which is obtained by applying once $D_{g-1}$-substitution to $H(g)$. 
Moreover, the Lefschetz fibration $f_{H(g,1)}$ has $2g+6$ disjoint sections of self-intersection $-1$. 
\end{thm}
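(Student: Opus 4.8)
The plan is to establish the two claims in turn: first that $H(g,1)$ is produced from $H(g)$ by a single $D_{g-1}$-substitution (so that, since $H(g)=1$ and the daisy relation $D_{g-1}=1$ hold, it is a positive relator equal to $1$), and then that the resulting fibration carries $2g+6$ disjoint $(-1)$-sections, which I would obtain by lifting the entire computation to $\Gamma_g^{2g+6}$. Throughout, the only moves used on the level of $\Gamma_g$ are conjugations and elementary transformations (Hurwitz moves), which preserve both the group element and the word length, together with the one length-decreasing daisy substitution.

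For the first claim I would start from the form of $H(g)$ furnished by Lemma~\ref{lem3}: taking its image in $\Gamma_g$ (so $\alpha_i\mapsto c_i$ and $\alpha_{2g+1}^\prime\mapsto c_{2g+1}$), that lemma rewrites $H(g)$, up to Hurwitz equivalence, as the product of the two fans $(c_{2g+1}c_{2g}\cdots c_1)^2$ and $(c_1c_2\cdots c_{2g}c_{2g+1})^2$. I would process the first fan with Lemma~\ref{lem3.3}(a) at $k=2g+1$, $a_i=c_i$: after a cyclic move splitting off one $c_{2g+1}$, the remaining block $(c_{2g}\cdots c_1)(c_{2g+1}\cdots c_1)$ becomes $c_{2g+1}^{2g+1}\bar d_{2g}\cdots\bar d_1$. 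I would then feed this word in as the prefix $D$ of Lemma~\ref{lem3.4}(b) applied to the second fan at $l=g$ (with the trailing $c_{2g+1}$ playing the role of the suffix $E$); this produces the even family $\bar d_2\bar d_4\cdots\bar d_{2g}$, a block $c_1c_2\cdots c_{2g}c_{2g+1}^{g+1}$, and conjugates $D$ by $\varphi^{-1}$. The crucial simplification is that every factor of $\varphi=c_{2g+1}^{g+1}c_{2g-1}^{g}\cdots c_3^2c_1$ involves only odd-indexed curves and hence commutes with $t_{c_{2g+1}}$, so ${}_{\varphi^{-1}}(c_{2g+1})=c_{2g+1}$ while the $\bar d_i$ from the first fan turn into exactly the conjugated family ${}_{\varphi^{-1}}(\bar d_{2g})\cdots{}_{\varphi^{-1}}(\bar d_1)$ that heads $H(g,1)$.

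It then remains to reshape the block $c_1c_2\cdots c_{2g}$ and gather the powers of $c_{2g+1}$. Using $t_{c_i}t_{c_{i+1}}=t_{e_{i+1}}t_{c_i}$ repeatedly and the fact that the odd curves $c_1,c_3,\ldots,c_{2g-1}$ are pairwise disjoint (so they commute and can be reordered, and $c_{2g+1}$ commutes past all of them), I would rewrite $c_1c_2\cdots c_{2g}$ as $e_2e_4\cdots e_{2g}\,c_{2g-1}\cdots c_3c_1$. The fans together contribute $3g+4$ copies of $c_{2g+1}$ (the $2g+1$ from Lemma~\ref{lem3.3}(a), the $g+1$ from Lemma~\ref{lem3.4}(b), and two further copies carried along cyclically); moving $c_{2g+1}^{\,g-2}$ to the left of the odd curves and pushing the rest to the end exposes precisely the subword $c_{2g+1}^{\,g-2}c_{2g-1}\cdots c_3c_1$ followed by $c_{2g+1}^{\,(3g+4)-(g-2)}=c_{2g+1}^{2g+6}$. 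Since $c_{2g+1}^{\,g-2}c_{2g-1}\cdots c_3c_1$ is exactly the positive side of the daisy relator $D_{g-1}$, applying the $D_{g-1}$-substitution replaces it by $x_1x_2\cdots x_g$ and yields $H(g,1)$ verbatim; as every letter is an honest right-handed Dehn twist (the $x_i$, and conjugates of positive twists such as $\bar d_i,e_i$), $H(g,1)$ is a positive relator.

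For the sections I would run this identical sequence of moves one level up, in $\Gamma_g^{2g+6}$, replacing each curve by its lift from Figures~\ref{curves}, \ref{curves3} and~\ref{curves2} and invoking the boundary versions of the lift lemmas (Lemmas~\ref{lem1}--\ref{lem3}) together with the lifted daisy relator $\mathcal{D}_{g-1}$. The aim is to show that the lifted positive relator equals the boundary multitwist $t_{\delta_1}t_{\delta_2}\cdots t_{\delta_{2g+6}}$; by the criterion for disjoint $(-1)$-sections recalled in Section~3, such a lift yields $2g+6$ disjoint sections of self-intersection $-1$. The main obstacle is precisely this bookkeeping: keeping the conjugation by $\varphi^{-1}$ consistent across Lemmas~\ref{lem3.3}(a) and~\ref{lem3.4}(b), verifying that the two fans interlock so that the final exponent of $c_{2g+1}$ is $2g+6$ (and not, say, $3g+4$ or $g+1$), and—most delicately—checking that the lifted computation distributes the $2g+6$ boundary twists so that $\mathcal{D}_{g-1}$ can be applied on the nose without disturbing the boundary.
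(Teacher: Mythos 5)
Your first half --- producing $H(g,1)$ from $H(g)$ by one $D_{g-1}$-substitution --- is essentially the paper's own argument: rewrite $H(g)$ via Lemma~\ref{lem3} as the product of the two fans, convert the first fan by Lemma~\ref{lem3.3}(a) into $c_{2g+1}^{2g+2}\bar d_{2g}\cdots\bar d_1$, feed it as the prefix $D$ into Lemma~\ref{lem3.4}(b) (using that $\varphi$ fixes $c_{2g+1}$), split $c_1\cdots c_{2g}$ into $e_2e_4\cdots e_{2g}\cdot c_1c_3\cdots c_{2g-1}$, collect $c_{2g+1}^{3g+4}$, and substitute $c_{2g+1}^{g-2}c_1c_3\cdots c_{2g-1}\mapsto x_1\cdots x_g$ to land on $c_{2g+1}^{2g+6}$. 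That part is correct and matches the paper step for step.

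The gap is in the section count. You propose to ``run the identical sequence one level up in $\Gamma_g^{2g+6}$'' and expect the lifted relator to equal the boundary multitwist $\delta_1\cdots\delta_{2g+6}$, dismissing the remaining work as bookkeeping. But the lift of $H(g)$ supplied by Lemma~\ref{lem2} is the relation $\delta=(\alpha_{2g+1}\cdots\alpha_1\alpha_1\cdots\alpha_{2g}\alpha_{2g+1}^\prime)^2$, where $\delta$ is a single \emph{interior} separating curve, not the product of the $2g+6$ boundary twists. Running your moves upstairs and applying $\mathcal{D}_{g-1}$ therefore produces only $\delta = P\cdot\alpha_{2g+1}^{2g+5}\alpha_{2g+1}^\prime$ for a positive word $P$, which certifies no sections at all by the criterion you invoke. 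The missing idea in the paper is a \emph{second} daisy relation, living in the complementary region of $\alpha_1\cup\cdots\cup\alpha_{2g-1}$, which is a sphere with the $2g+8$ boundary components $\alpha_{2g+1},\alpha_{2g+1}^\prime,\delta_1,\ldots,\delta_{2g+6}$: namely $\alpha_{2g+1}^{2g+5}\cdot\delta_1\cdots\delta_{2g+6}\cdot\alpha_{2g+1}^\prime=\zeta_1\cdots\zeta_{2g+6}\cdot\delta$. One multiplies the relation $\delta=P\cdot\alpha_{2g+1}^{2g+5}\alpha_{2g+1}^\prime$ by $\delta_1\cdots\delta_{2g+6}$ (central, so this is harmless), applies this relation to trade $\alpha_{2g+1}^{2g+5}\delta_1\cdots\delta_{2g+6}\alpha_{2g+1}^\prime$ for $\zeta_1\cdots\zeta_{2g+6}\delta$, and cancels $\delta$ to obtain $\delta_1\cdots\delta_{2g+6}=P\cdot\zeta_1\cdots\zeta_{2g+6}$ with each $\zeta_i$ mapping to $c_{2g+1}$. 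This step is exactly what converts the exponent $2g+5$ (plus $\alpha_{2g+1}^\prime$) into the $2g+6$ boundary twists, and hence the $2g+6$ disjoint $(-1)$-sections; without it your argument does not close.
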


\begin{proof}
Note that by Lemma~\ref{lem3.3} (a), we have 
\begin{align}
(\alpha_{2g+1} \alpha_{2g}\cdots \alpha_1)^2 &= \alpha_{2g+1} \cdot (\alpha_{2g} \cdots \alpha_1) \cdot (\alpha_{2g+1} \alpha_{2g} \cdots \alpha_1) \label{7} \\
&\sim \alpha_{2g+1}^{2g+2}\bar{\beta}_{2g}\cdots \bar{\beta}_1 \notag
\end{align}
Moreover, one can check that 
\begin{align}
\alpha_1 \alpha_2 \alpha_3 \cdots \alpha_{2g} \sim (\gamma_2 \gamma_4 \gamma_6 \cdots \gamma_{2g}) \cdot (\alpha_1 \alpha_3 \alpha_5 \cdots \alpha_{2g-1}). \label{8}
\end{align}
Therefore, by Lemma~\ref{lem3.4} (b), we have 
\begin{align*}
&(\alpha_{2g+1} \alpha_{2g} \cdots \alpha_2 \alpha_1)^2 \cdot (\alpha_1 \alpha_2 \cdots \alpha_{2g} \alpha_{2g+1}) \cdot (\alpha_1 \alpha_2 \cdots \alpha_{2g} \alpha_{2g+1}^\prime) \\
&\xrightarrow[]{(\ref{7})} \alpha_{2g+1}^{2g+2} \bar{\beta}_{2g} \cdots \bar{\beta}_1 \cdot (\alpha_1 \alpha_2 \cdots \alpha_{2g} \alpha_{2g+1}) \cdot (\alpha_1 \alpha_2 \cdots \alpha_{2g}) \alpha_{2g+1}^\prime \\
&\sim_C {}_{\phi^{-1}}\{\alpha_{2g+1}^{2g+2} \cdot \bar{\beta}_{2g} \cdots \bar{\beta}_1\} \cdot (\bar{\beta}_2 \bar{\beta}_4 \cdots \bar{\beta}_{2g}) \cdot (\alpha_1 \alpha_2 \cdots \alpha_{2g} \cdot \alpha_{2g+1}^{g+1}) \cdot \alpha_{2g+1}^\prime \\
&= \alpha_{2g+1}^{2g+2} \cdot {}_{\phi^{-1}}(\bar{\beta}_{2g}) \cdots {}_{\phi^{-1}}(\bar{\beta}_1) \cdot (\bar{\beta}_2 \bar{\beta}_4 \cdots \bar{\beta}_{2g}) \cdot (\alpha_1 \alpha_2 \cdots \alpha_{2g} \cdot \alpha_{2g+1}^{g+1}) \cdot \alpha_{2g+1}^\prime \\
&\xrightarrow[]{C} {}_{\phi^{-1}}(\bar{\beta}_{2g}) \cdots {}_{\phi^{-1}}(\bar{\beta}_1) \cdot (\bar{\beta}_2 \bar{\beta}_4 \cdots \bar{\beta}_{2g}) \cdot (\alpha_1 \alpha_2 \cdots \alpha_{2g}) \cdot \alpha_{2g+1}^{3g+3} \cdot \alpha_{2g+1}^\prime \\
&\xrightarrow[]{(\ref{8})} {}_{\phi^{-1}}(\bar{\beta}_{2g}) \cdots {}_{\phi^{-1}}(\bar{\beta}_1) \cdot (\bar{\beta}_2 \bar{\beta}_4 \cdots \bar{\beta}_{2g}) \cdot (\gamma_2 \gamma_4 \cdots \gamma_{2g}) \cdot (\alpha_1 \alpha_3 \cdots \alpha_{2g-1}) \cdot \alpha_{2g+1}^{3g+3} \cdot \alpha_{2g+1}^\prime. 
\end{align*}

Since $\delta$ is a central element of the group generated by $\alpha_1,\ldots,\alpha_{2g+1},\alpha_{2g+1}^\prime$, by Lemma~\ref{lem3}, the operation $\sim_C$ is Hurwitz equivalent (for example, see Lemma 6 in \cite{Auroux2}). 
We have the following relation in $\Gamma_g^{2g+6}$ which is Hurwitz equivalent to the relation $\delta=(\alpha_{2g+1} \alpha_{2g} \cdots \alpha_2 \alpha_1 \alpha_1 \alpha_2 \cdots \alpha_{2g} \alpha_{2g+1}^\prime)^2$: 
\begin{align*}
\delta = {}_{\phi^{-1}}(\bar{\beta}_{2g}) \cdots {}_{\phi^{-1}}(\bar{\beta}_1) \cdot (\bar{\beta}_2 \bar{\beta}_4 \cdots \bar{\beta}_{2g}) \cdot (\gamma_2 \gamma_4 \cdots \gamma_{2g}) \cdot (\alpha_1 \alpha_3 \cdots \alpha_{2g-1}) \cdot \alpha_{2g+1}^{3g+3} \cdot \alpha_{2g+1}^\prime. 
\end{align*}

By applying once $\mathcal{D}_{g-1}$-substitution to this relation, we have the following relation:
\begin{align*}
\delta &= {}_{\phi^{-1}}(\bar{\beta}_{2g}) \cdots {}_{\phi^{-1}}(\bar{\beta}_1) \cdot (\bar{\beta}_2 \bar{\beta}_4 \cdots \bar{\beta}_{2g}) \cdot (\gamma_2 \gamma_4 \cdots \gamma_{2g}) \cdot (\chi_1 \chi_2 \cdots \chi_g) \cdot \alpha_{2g+1}^{2g+5} \cdot \alpha_{2g+1}^\prime, 
\end{align*}
Moreover, by $\alpha_{2g+1}^{2g+5} \cdot \alpha_{2g+1}^\prime \cdot \delta_1 \delta_2 \cdots \delta_{2g+6} = \alpha_{2g+1}^{2g+5} \cdot \delta_1 \delta_2 \cdots \delta_{2g+6} \cdot \alpha_{2g+1}^\prime$ and the daisy-relation $\alpha_{2g+1}^{2g+5} \cdot \delta_1 \delta_2 \cdots \delta_{2g+6} \cdot \alpha_{2g+1}^\prime = \zeta_1 \zeta_2 \cdots \zeta_{2g+6} \cdot \delta$, we obtain 
\begin{align*}
&\delta \cdot \delta_1 \delta_2 \cdots \delta_{2g+6} \\
&= {}_{\phi^{-1}}(\bar{\beta}_{2g}) \cdots {}_{\phi^{-1}}(\bar{\beta}_1) \cdot (\bar{\beta}_2  \bar{\beta}_4 \cdots \bar{\beta}_{2g}) \cdot (\gamma_2 \gamma_4 \cdots \gamma_{2g}) \cdot (\chi_1 \chi_2 \cdots \chi_g) \cdot \\
&\alpha_{2g+1}^{2g+5} \cdot \alpha_{2g+1}^\prime \cdot \delta_1 \delta_2 \cdots \delta_{2g+6} \\ 
&= {}_{\phi^{-1}}(\bar{\beta}_{2g}) \cdots {}_{\phi^{-1}}(\bar{\beta}_1) \cdot (\bar{\beta}_2 \bar{\beta}_4 \cdots \bar{\beta}_{2g}) \cdot (\gamma_2 \gamma_4 \cdots \gamma_{2g}) \cdot (\chi_1 \chi_2 \cdots \chi_g) \cdot \\
&\zeta_1 \zeta_2 \cdots \zeta_{2g+6} \cdot \delta. 
\end{align*}
Therefore, we have the following relation in $\Gamma_g^{2g+6}$:
\begin{align*}
&\delta_1 \cdot \delta_2 \cdots \delta_{2g+6} \\
&={}_{\phi^{-1}}(\bar{\beta}_{2g}) \cdots {}_{\phi^{-1}}(\bar{\beta}_1) \cdot (\bar{\beta}_2 \bar{\beta}_4 \cdots \bar{\beta}_{2g}) \cdot (\gamma_2 \gamma_4 \cdots \gamma_{2g}) \cdot (\chi_1 \chi_2 \cdots \chi_g) \cdot \zeta_1 \zeta_2 \cdots \zeta_{2g+6}
\end{align*}
It is easily seen that $\zeta_1,\ldots,\zeta_{2g+6}$ are mapped to $c_{2g+1}$ under the map $\Gamma_g^{2g+6} \to \Gamma_g$. 
This completes the proof. 
\end{proof}

\begin{thm}\label{4.2}
There is a positive relator 
\begin{align*}
H(g,2)& = {}_{\varphi^{-2}}(\bar{e}_{2g}) \cdots {}_{\varphi^{-2}}(\bar{e}_4)  {}_{\varphi^{-2}}(\bar{e}_2) {}_{\varphi^{-2}}(d_{2g}) \cdots {}_{\varphi^{-2}}(d_4) \cdot {}_{\varphi^{-2}}(d_2) \cdot \\
&\bar{d}_2 \bar{d}_4 \cdots \bar{d}_{2g} e_2 e_4 \cdots e_{2g} (x_1 x_2 \cdots x_g)^2 c_{2g+1}^8. 
\end{align*}
which is obtained by applying twice $D_{g-1}$-substitutions to $H(g)$. Moreover, the Lefschetz fibration $f_{H(g,2)}$ has $8$ disjoint sections of self-intersection $-1$. 
\end{thm}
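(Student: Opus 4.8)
The plan is to follow the proof of Theorem~\ref{4.1} but to carry out the $\mathcal{D}_{g-1}$-substitution twice; the exponent $\varphi^{-2}$ in the statement is the signal that Lemma~\ref{lem3.4} must be invoked a second time, each invocation contributing one factor $\varphi^{-1}$. First I would reproduce the opening of that proof verbatim: starting from the lifted relation of Lemma~\ref{lem3},
\[
(\alpha_{2g+1}\cdots\alpha_1)^2(\alpha_1\cdots\alpha_{2g}\alpha_{2g+1})(\alpha_1\cdots\alpha_{2g}\alpha_{2g+1}^\prime),
\]
I apply (\ref{7}) (that is, Lemma~\ref{lem3.3}(a)) to the reverse square and then Lemma~\ref{lem3.4}(b) together with (\ref{8}) to the forward pair, reaching the same intermediate relation
\[
\delta={}_{\phi^{-1}}(\bar{\beta}_{2g})\cdots{}_{\phi^{-1}}(\bar{\beta}_1)\cdot(\bar{\beta}_2\bar{\beta}_4\cdots\bar{\beta}_{2g})(\gamma_2\gamma_4\cdots\gamma_{2g})(\alpha_1\alpha_3\cdots\alpha_{2g-1})\,\alpha_{2g+1}^{3g+3}\,\alpha_{2g+1}^\prime .
\]
The trailing block $(\alpha_1\alpha_3\cdots\alpha_{2g-1})\alpha_{2g+1}^{g-2}$ is earmarked for the first daisy substitution, exactly as in Theorem~\ref{4.1}.

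The genuinely new step is a second pass applied to the leading tail $P={}_{\phi^{-1}}(\bar{\beta}_{2g})\cdots{}_{\phi^{-1}}(\bar{\beta}_1)$. Because the $\bar{\beta}_i$ were created from a descending pair through Lemma~\ref{lem3.3}(a), I would draw a factor $\alpha_{2g+1}^{2g+1}$ from the reservoir $\alpha_{2g+1}^{3g+3}$ (which can be slid into position using that $\alpha_{2g+1}$ commutes with the odd curves) and run Lemma~\ref{lem3.3}(a) backwards, conjugated by $\phi^{-1}$ (which fixes $\alpha_{2g+1}$ and every odd $\alpha$), to re-expose a second long chain-pair ${}_{\phi^{-1}}\bigl[(\alpha_{2g}\cdots\alpha_1)(\alpha_{2g+1}\cdots\alpha_1)\bigr]$. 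Feeding this pair into Lemma~\ref{lem3.4} a second time, with the two applications ordered so that the earlier even-indexed material is caught inside the conjugated part on both occasions, contributes the second factor $\phi^{-1}$ and hence the $\varphi^{-2}$ of the statement; after the reverse analogue of (\ref{8}) this exposes a second daisy-able block $(\alpha_1\alpha_3\cdots\alpha_{2g-1})$ carrying its own power of $\alpha_{2g+1}$. The even-indexed conjugates left behind separate into the two families ${}_{\varphi^{-2}}(\bar{e}_{2j})$ and ${}_{\varphi^{-2}}(d_{2j})$ recorded in $H(g,2)$, while the odd-indexed curves are consumed by the two substitutions. Applying $\mathcal{D}_{g-1}$ to each exposed block then produces the factor $(x_1\cdots x_g)^2=(\chi_1\cdots\chi_g)^2$.

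Finally I would track the exponent of $\alpha_{2g+1}$ through the whole process: each of the two substitutions lowers it by $g-2$, and the re-expansion of $P$ only reorganizes the remaining powers, so a short count shows that the leftover is exactly $\alpha_{2g+1}^{7}\alpha_{2g+1}^\prime$. As in Theorem~\ref{4.1}, the centrality of $\delta$ together with the boundary daisy relation $\alpha_{2g+1}^{7}\delta_1\cdots\delta_8\,\alpha_{2g+1}^\prime=\zeta_1\cdots\zeta_8\,\delta$ (of type $8$) absorbs this leftover into eight section curves $\zeta_1,\ldots,\zeta_8$, which map to $c_{2g+1}$ under $\Gamma_g^{8}\to\Gamma_g$; hence $f_{H(g,2)}$ carries $8$ disjoint sections of self-intersection $-1$ and the positive relator contains $c_{2g+1}^{8}$, and projecting to $\Gamma_g$ yields precisely $H(g,2)$. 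I expect the main obstacle to be the middle paragraph: arranging the second pass so that Lemma~\ref{lem3.4} deposits the even-indexed curves in the doubly-conjugated ($\varphi^{-2}$) position and splits them correctly into the $\bar{e}$- and $d$-families (in particular fixing the sign of the new conjugating factor through the choice of part (a) versus part (b) and the order of the two applications), and verifying that the $\alpha_{2g+1}$-exponent bookkeeping genuinely terminates at $7$ rather than at some $g$-dependent value.
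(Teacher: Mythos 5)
Your strategy is sound and, carried out carefully, does reach the paper's key intermediate relation (\ref{11}) and hence the stated $H(g,2)$; but your route is a detour compared to what the paper actually does. The paper never passes through the intermediate relation of Theorem~\ref{4.1}: instead of converting the descending square via (\ref{7}) and later undoing that conversion, it applies Lemma~\ref{lem3.4}(a) \emph{directly} to $\alpha_{2g+1}\cdot(\alpha_{2g}\cdots\alpha_1)\cdot(\alpha_{2g+1}\cdots\alpha_1)\cdot D$ (this is relation (\ref{9})), obtaining $\alpha_{2g+1}^{g+2}(\alpha_{2g}\cdots\alpha_1)(\beta_{2g}\cdots\beta_2)\,{}_{\phi}(D)$, then splits $(\alpha_{2g}\cdots\alpha_1)$ by (\ref{10}) into $(\alpha_{2g-1}\cdots\alpha_1)(\bar\gamma_{2g}\cdots\bar\gamma_2)$, performs one global conjugation by $\phi^{-1}$ to strip the ${}_{\phi}$ off the trailing ascending pair (placing ${}_{\phi^{-1}}$ on the leading block), and only then runs Lemma~\ref{lem3.4}(b) on the trailing pair, whose ${}_{\phi_l^{-1}}(D)$ clause deposits the second $\phi^{-1}$ and produces the ${}_{\phi^{-2}}$. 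Your ``undo Lemma~\ref{lem3.3}(a), then redo with Lemma~\ref{lem3.4}(a)'' second pass accomplishes the same thing at the cost of an extra reversal and an extra global conjugation; the bookkeeping does close up ($3g+3-(2g+1)+(g+1)=2g+3$ copies of $\alpha_{2g+1}$ before the substitutions, and $2g+3-2(g-2)=7$ after, independent of $g$), and the even-indexed output of the second pass is exactly the ${}_{\varphi^{-2}}(\bar{e}_{2j})$, ${}_{\varphi^{-2}}(d_{2j})$ families while the first pass supplies the unconjugated $\bar{d}_{2j}$, $e_{2j}$. One justification as written is wrong, though easily repaired: the factor $\alpha_{2g+1}^{2g+1}$ cannot be ``slid into position'' past the $\bar{\beta}$- and $\gamma$-blocks by commuting with the odd curves, since $\alpha_{2g+1}$ meets $\bar{\beta}_{2g}$ and $\gamma_{2g}$; it must instead be commuted past $\alpha_{2g+1}^{\prime}$ to the very end of the word and then brought to the front by a cyclic permutation, which is a legitimate Hurwitz equivalence here by the centrality of $\delta$ and the lemma of Auroux cited in the paper. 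The daisy-substitution and boundary-relation endgame, via $\alpha_{2g+1}^{7}\delta_1\cdots\delta_8\,\alpha_{2g+1}^{\prime}=\zeta_1\cdots\zeta_8\,\delta$, is identical to the paper's.
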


\begin{proof}
Let $D$ be a product of Dehn twists. Then, we note that by Lemma~\ref{lem3.4} (a) we have 
\begin{align}
&(\alpha_{2g+1} \alpha_{2g} \cdots \alpha_2 \alpha_1)^2 \cdot D = \alpha_{2g+1} \cdot (\alpha_{2g} \cdots \alpha_1) \cdot (\alpha_{2g+1} \alpha_{2g} \cdots \alpha_1) \cdot D \label{9} \\
&\sim_C \alpha_{2g+1}^{g+2} \cdot (\alpha_{2g} \cdots \alpha_2 \alpha_1) \cdot (\beta_{2g} \cdots \beta_4 \beta_2) \cdot {}_{\phi}(D). \notag
\end{align}
Moreover, we have 
\begin{align}
\alpha_{2g}\cdots \alpha_3 \alpha_2 \alpha_1 \sim (\alpha_{2g-1} \cdots \alpha_5 \alpha_3 \alpha_1) \cdot (\bar{\gamma}_{2g} \cdots \bar{\gamma}_6 \bar{\gamma}_4 \bar{\gamma}_2). \label{10}
\end{align}
Then, by Lemma~\ref{lem3.4} (b) we have 
\begin{align*}
&(\alpha_{2g+1} \alpha_{2g} \cdots \alpha_2 \alpha_1)^2 \cdot (\alpha_1 \alpha_2 \cdots \alpha_{2g} \alpha_{2g+1}) \cdot (\alpha_1 \alpha_2 \cdots \alpha_{2g} \alpha_{2g+1}^\prime) \\
&\xrightarrow[]{(\ref{9})} \alpha_{2g+1}^{g+2} \cdot  (\alpha_{2g} \cdots \alpha_2 \alpha_1) \cdot (\beta_{2g} \cdots \beta_4 \beta_2) \cdot {}_{\phi}\{(\alpha_1 \alpha_2 \cdots \alpha_{2g} \alpha_{2g+1}) \cdot (\alpha_1 \alpha_2 \cdots \alpha_{2g} \alpha_{2g+1}^\prime)\} \\
&\xrightarrow[]{(\ref{10})}  \alpha_{2g+1}^{g+2} \cdot (\alpha_{2g-1} \cdots \alpha_3 \alpha_1) \cdot (\bar{\gamma}_{2g} \cdots  \bar{\gamma}_4 \bar{\gamma}_2) \cdot (\beta_{2g} \cdots \beta_4 \beta_2) \cdot \\
&{}_{\phi}\{(\alpha_1 \alpha_2 \cdots \alpha_{2g} \alpha_{2g+1}) \cdot (\alpha_1 \alpha_2 \cdots \alpha_{2g} \alpha_{2g+1}^\prime)\} \\
&\xrightarrow[]{\phi^{-1}} {}_{\phi^{-1}}\{\alpha_{2g+1}^{g+2} \cdot (\alpha_{2g-1} \cdots \alpha_3 \alpha_1) \cdot (\bar{\gamma}_{2g} \cdots \bar{\gamma}_4 \bar{\gamma}_2) \cdot (\beta_{2g} \cdots \beta_4 \beta_2)\} \cdot \\
&(\alpha_1 \alpha_2 \cdots \alpha_{2g} \alpha_{2g+1}) \cdot (\alpha_1 \alpha_2 \cdots \alpha_{2g} \alpha_{2g+1}^\prime) \\
&\sim_C {}_{\phi^{-2}}\{\alpha_{2g+1}^{g+2} \cdot (\alpha_{2g-1} \cdots \alpha_3 \alpha_1) \cdot (\bar{\gamma}_{2g} \cdots \bar{\gamma}_4 \bar{\gamma}_2) \cdot (\beta_{2g} \cdots \beta_4 \beta_2)\} \cdot \\
&(\bar{\beta}_2 \bar{\beta}_4 \cdots \bar{\beta}_{2g}) \cdot (\alpha_1 \alpha_2 \cdots \alpha_{2g} \cdot \alpha_{2g+1}^{g+1}) \cdot \alpha_{2g+1}^\prime \\
&= \alpha_{2g+1}^{g+2} \cdot (\alpha_{2g-1} \cdots \alpha_3 \alpha_1) \cdot {}_{\phi^{-2}}\{(\bar{\gamma}_{2g} \cdots \bar{\gamma}_4 \bar{\gamma}_2) \cdot (\beta_{2g} \cdots \beta_4 \beta_2)\} \cdot \\ 
&(\bar{\beta}_2 \bar{\beta}_4 \cdots \bar{\beta}_{2g}) \cdot (\alpha_1 \alpha_2 \cdots \alpha_{2g} \cdot \alpha_{2g+1}^{g+1}) \cdot \alpha_{2g+1}^\prime \\
&\xrightarrow[]{(\ref{8})} \alpha_{2g+1}^{g+2} \cdot (\alpha_{2g-1} \cdots \alpha_3 \alpha_1) \cdot {}_{\phi^{-2}}\{(\bar{\gamma}_{2g} \cdots \bar{\gamma}_4 \bar{\gamma}_2) \cdot (\beta_{2g} \cdots \beta_4 \beta_2)\} \cdot \\ 
&(\bar{\beta}_2 \bar{\beta}_4 \cdots \bar{\beta}_{2g}) \cdot (\gamma_2 \gamma_4 \cdots \gamma_{2g}) \cdot (\alpha_1 \alpha_3 \cdots \alpha_{2g-1}) \cdot \alpha_{2g+1}^{g+1} \cdot \alpha_{2g+1}^\prime \\
&\xrightarrow[]{C} {}_{\phi^{-2}}\{(\bar{\gamma}_{2g} \cdots \bar{\gamma}_4 \bar{\gamma}_2) \cdot (\beta_{2g} \cdots \beta_4 \beta_2)\} \cdot \\ 
&(\bar{\beta}_2 \bar{\beta}_4 \cdots \bar{\beta}_{2g}) \cdot (\gamma_2 \gamma_4 \cdots \gamma_{2g}) \cdot \alpha_1^2 \alpha_3^2 \cdots \alpha_{2g-1}^2 \cdot \alpha_{2g+1}^{2g+3} \cdot \alpha_{2g+1}^\prime \\
&=({}_{\phi^{-2}}(\bar{\gamma}_{2g}) \cdots {}_{\phi^{-2}}(\bar{\gamma}_4) \cdot \cdot {}_{\phi^{-2}}(\bar{\gamma}_2)) \cdot ({}_{\phi^{-2}}(\beta_{2g}) \cdots {}_{\phi^{-2}}(\beta_4) \cdot \cdot {}_{\phi^{-2}}(\beta_2)) \cdot \\
&(\bar{\beta}_2 \bar{\beta}_4 \cdots \bar{\beta}_{2g}) \cdot (\gamma_2 \gamma_4 \cdots \gamma_{2g}) \cdot \alpha_1^2 \alpha_3^2 \cdots \alpha_{2g-1}^2 \cdot \alpha_{2g+1}^{2g+3} \cdot \alpha_{2g+1}^\prime. 
\end{align*}

Note that by Lemma~\ref{lem3}, the operation $\xrightarrow[]{\phi^{-1}}$ is also Hurwitz equivalent from Lemma 6 in \cite{Auroux2}. 
We have the following relation in $\Gamma_g^8$ which is Hurwitz equivalent to the relation $\delta=(\alpha_{2g+1} \alpha_{2g} \cdots \alpha_2 \alpha_1 \alpha_1 \alpha_2 \cdots \alpha_{2g} \alpha_{2g+1}^\prime)^2$: 
\begin{align}
&\delta = ({}_{\phi^{-2}}(\bar{\gamma}_{2g}) \cdots {}_{\phi^{-2}}(\bar{\gamma}_4) \cdot \cdot {}_{\phi^{-2}}(\bar{\gamma}_2)) \cdot ({}_{\phi^{-2}}(\beta_{2g}) \cdots {}_{\phi^{-2}}(\beta_4) \cdot \cdot {}_{\phi^{-2}}(\beta_2)) \cdot \label{11} \\
&(\bar{\beta}_2 \bar{\beta}_4 \cdots \bar{\beta}_{2g}) \cdot (\gamma_2 \gamma_4 \cdots \gamma_{2g}) \cdot \alpha_1^2 \alpha_3^2 \cdots \alpha_{2g-1}^2 \cdot \alpha_{2g+1}^{2g+3} \cdot \alpha_{2g+1}^\prime. \notag
\end{align}

By applying twice $\mathcal{D}_{g-1}$-substitutions to this relation, we have the following relation:
\begin{align*}
&\delta = ({}_{\phi^{-2}}(\bar{\gamma}_{2g}) \cdots {}_{\phi^{-2}}(\bar{\gamma}_4) \cdot \cdot {}_{\phi^{-2}}(\bar{\gamma}_2)) \cdot ({}_{\phi^{-2}}(\beta_{2g}) \cdots {}_{\phi^{-2}}(\beta_4) \cdot {}_{\phi^{-2}}(\beta_2)) \cdot \\
&(\bar{\beta}_2 \bar{\beta}_4 \cdots \bar{\beta}_{2g}) \cdot (\gamma_2 \gamma_4 \cdots \gamma_{2g}) \cdot (\chi_1 \chi_2 \cdots \chi_g)^2 \cdot \alpha_{2g+1}^7 \cdot \alpha_{2g+1}^\prime. 
\end{align*}
Moreover, by $\alpha_{2g+1}^7 \cdot \alpha_{2g+1}^\prime \cdot \delta_1\delta_2\cdots \delta_8 = \alpha_{2g+1}^7 \cdot \delta_1 \delta_2 \cdots \delta_8 \cdot \alpha_{2g+1}^\prime$ and the daisy-relation $\alpha_{2g+1}^7 \cdot \delta_1 \delta_2 \cdots \delta_8 \cdot \alpha_{2g+1}^\prime = \zeta_1 \zeta_2 \cdots \zeta_8 \cdot \delta$, 
we obtain 
\begin{align*}
\delta \cdot \delta_1\delta_2\cdots \delta_8 &= ({}_{\phi^{-2}}(\bar{\gamma}_{2g}) \cdots {}_{\phi^{-2}}(\bar{\gamma}_4) \cdot \cdot {}_{\phi^{-2}}(\bar{\gamma}_2)) \cdot ({}_{\phi^{-2}}(\beta_{2g}) \cdots {}_{\phi^{-2}}(\beta_4) \cdot \cdot {}_{\phi^{-2}}(\beta_2)) \cdot \\
&(\bar{\beta}_2 \bar{\beta}_4 \cdots \bar{\beta}_{2g}) \cdot (\gamma_2 \gamma_4 \cdots \gamma_{2g}) \cdot (\chi_1 \chi_2 \cdots \chi_g)^2 \cdot \alpha_{2g+1}^7 \cdot \alpha_{2g+1}^\prime \cdot \delta_1\delta_2\cdots \delta_8 \\
&=({}_{\phi^{-2}}(\bar{\gamma}_{2g}) \cdots {}_{\phi^{-2}}(\bar{\gamma}_4) \cdot \cdot {}_{\phi^{-2}}(\bar{\gamma}_2)) \cdot ({}_{\phi^{-2}}(\beta_{2g}) \cdots {}_{\phi^{-2}}(\beta_4) \cdot \cdot {}_{\phi^{-2}}(\beta_2)) \cdot \\
&(\bar{\beta}_2 \bar{\beta}_4 \cdots \bar{\beta}_{2g}) \cdot (\gamma_2 \gamma_4 \cdots \gamma_{2g}) \cdot (\chi_1 \chi_2 \cdots \chi_g)^2 \cdot \zeta_1 \zeta_2 \cdots \zeta_8 \cdot \delta. 
\end{align*}
Therefore, we have the following relation in $\Gamma_g^8$:
\begin{align*}
\delta_1\delta_2\cdots \delta_8 &=({}_{\phi^{-2}}(\bar{\gamma}_{2g}) \cdots {}_{\phi^{-2}}(\bar{\gamma}_4) \cdot \cdot {}_{\phi^{-2}}(\bar{\gamma}_2)) \cdot ({}_{\phi^{-2}}(\beta_{2g}) \cdots {}_{\phi^{-2}}(\beta_4) \cdot \cdot {}_{\phi^{-2}}(\beta_2)) \cdot \\
&(\bar{\beta}_2 \bar{\beta}_4 \cdots \bar{\beta}_{2g}) \cdot (\gamma_2 \gamma_4 \cdots \gamma_{2g}) \cdot (\chi_1 \chi_2 \cdots \chi_g)^2 \cdot \cdot \zeta_1 \zeta_2 \cdots \zeta_8. 
\end{align*}

It is easily seen that $\alpha_i$ and $\chi_i$ are mapped to $c_i$ and $x_i$, respectively, and $\zeta_1,\ldots,\zeta_8$ are mapped to $c_{2g+1}$ under the map $\Gamma_g^8 \to \Gamma_g$. 
This completes the proof. 
\end{proof}


For $j=1,\ldots,2g$, we write 
\begin{align*}
f_j={}_{(c_{j-1}c_{j+1})^{-1}}(\bar{d_j}),
\end{align*}
where $c_0=1$.


\begin{thm}\label{4.7}
Let $g\geq 3$. Then, the monodromy of the hypereliptic Lefschetz fibration given by the word $H(g) = 1$ can be conjugated to contain a daisy relations of type $2(g-1)$. 
\end{thm}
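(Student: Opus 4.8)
The plan is to extract the daisy relation from a factorization we have already computed, rather than to manipulate $H(g)$ afresh. In the course of proving Theorem~\ref{4.2} we showed, via relation $(\ref{11})$, that the lift $\delta=(\alpha_{2g+1}\alpha_{2g}\cdots\alpha_1\alpha_1\cdots\alpha_{2g}\alpha_{2g+1}^\prime)^2$ of $H(g)$ is Hurwitz equivalent, through conjugations and elementary transformations (Lemmas~\ref{lem3.3}, \ref{lem3.4}, \ref{lem3}), to a relator whose tail is $\alpha_1^2\alpha_3^2\cdots\alpha_{2g-1}^2\cdot\alpha_{2g+1}^{2g+3}\cdot\alpha_{2g+1}^\prime$. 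All of these are Hurwitz moves, and they descend verbatim under the projection $\Gamma_g^8\to\Gamma_g$, which was computed before any daisy substitution was applied. First I would simply record the image of $(\ref{11})$ under this projection (under which $t_\delta\mapsto 1$, $\alpha_i\mapsto c_i$ and $\alpha_{2g+1}^\prime\mapsto c_{2g+1}$): it shows that $H(g)$ is Hurwitz equivalent to a positive relator $W$ in $\Gamma_g$ whose tail is $c_1^2c_3^2\cdots c_{2g-1}^2\cdot c_{2g+1}^{2g+4}$. No new mapping class group computation is needed for this step.

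Next I would exhibit the daisy subword inside $W$. Since the curves $c_3,c_5,\ldots,c_{2g-1},c_{2g+1}$ are pairwise disjoint, their Dehn twists commute, so the tail of $W$ may be rearranged so that $c_3^2c_5^2\cdots c_{2g-1}^2\cdot c_{2g+1}^{2g-2}$ occurs as a single consecutive block, with $c_1^2$ to its left and the surplus $c_{2g+1}^{6}$ to its right. This block is precisely the product of boundary Dehn twists forming the left-hand (boundary) side of the daisy relator $D_{2(g-1)}$ of type $2(g-1)$. Hence $W$, and therefore $H(g)$, contains a daisy relation of type $2(g-1)$, and one may perform the corresponding $D_{2(g-1)}$-substitution, replacing $c_3^2c_5^2\cdots c_{2g-1}^2\,c_{2g+1}^{2g-2}$ by the interior vanishing cycles $y_1y_2\cdots y_{2g-1}$.

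The only step requiring genuine care, as opposed to bookkeeping, is the geometric matching: one must confirm that the twists $c_3,\ldots,c_{2g-1}$ (each squared) together with $c_{2g+1}^{2g-2}$ really are the boundary curves of the embedded sphere-with-$2g$-holes carrying the interior curves $y_1,\ldots,y_{2g-1}$ of Figure~\ref{curves3}. Since this configuration is exactly how $D_{2(g-1)}$ was defined, the identification is immediate once the subword has been isolated. The remaining obstacle is purely arithmetic, and it is here that the hypothesis $g\geq 3$ enters: I would verify that $p=2(g-1)\geq 4$, so that $D_{2(g-1)}$ is a genuine daisy (not merely a lantern) relator, that the $c_{2g+1}$-exponent $2g+4$ of $W$ is at least the required $2g-2$ (it exceeds it by $6$ for every $g$), and that $c_1$ does not occur among the boundary curves of $D_{2(g-1)}$, so that the leftover twist $c_1^2$ simply persists in the relator after the substitution.
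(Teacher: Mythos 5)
Your proposal is correct and takes essentially the same route as the paper: the authors likewise project relation~(\ref{11}) from the proof of Theorem~\ref{4.2} to $\Gamma_g$ and observe that the resulting positive relator, ending in $c_1^2c_3^2\cdots c_{2g-1}^2c_{2g+1}^{2g+4}$, contains the boundary subword $c_3^2c_5^2\cdots c_{2g-1}^2c_{2g+1}^{2g-2}$ of the daisy relator $D_{2(g-1)}$. The commutation, matching, and arithmetic checks you spell out are correct and are simply left implicit in the paper's one-line argument.
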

\begin{proof}
Since $\alpha_i$ and $\delta$ are mapped to $c_i$ and $1$ under the map $\Gamma_g^n \to \Gamma_g$, respectively, 
by the equation (\ref{11}) acuired in Theorem~\ref{4.2}, we obtain the following relator: 
\begin{align*}
&1 = ({}_{\varphi^{-2}}(\bar{e}_{2g}) \cdots {}_{\varphi^{-2}}(\bar{e}_4) \cdot \cdot {}_{\varphi^{-2}}(\bar{e}_2)) \cdot ({}_{\varphi^{-2}}(d_{2g}) \cdots {}_{\varphi^{-2}}(d_4) \cdot {}_{\varphi^{-2}}(d_2)) \cdot \\
&(\bar{d}_2 \bar{d}_4 \cdots \bar{d}_{2g}) \cdot (e_2 e_4 \cdots e_{2g}) \cdot c_1^2 c_3^2 \cdots c_{2g-1}^2 \cdot c_{2g+1}^{2g+4}. 
\end{align*}
This relator contains $D_{2(g-1)}$-relator. 
This completes the proof. 
\end{proof}

\begin{thm}\label{4.3} Let $g\geq 3$. Then, the monodromy of the Lefschetz fibration given by the word  $I(g)= 1$ can be conjugated to contain 

\begin{enumerate}[label={\upshape(\roman*)}]
\setlength{\itemsep}{5pt}
\item $g+1$ daisy relations of type $g-1$.
\item $(g+1)/2$ (resp. $g/2$) daisy relations of type $2(g-1)$ for odd (resp. even) $g$.
\end{enumerate}

\end{thm}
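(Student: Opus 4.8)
The plan is to exploit the uniform block structure $I(g)=(c_1c_2\cdots c_{2g+1})^{2g+2}$, a product of $2g+2$ identical ascending blocks $W=c_1c_2\cdots c_{2g}c_{2g+1}$, and to extract one type-$(g-1)$ daisy relator from each of the $g+1$ consecutive pairs of blocks. The computational engine is Lemma~\ref{lem3.4}(b). Writing a pair as $W\cdot W=\bigl[(c_1\cdots c_{2g+1})(c_1\cdots c_{2g})\bigr]\cdot c_{2g+1}$ and applying Lemma~\ref{lem3.4}(b) with $l=g$ (trivial $D$, and $E=c_{2g+1}$) turns the bracket into $(\bar d_2\bar d_4\cdots \bar d_{2g})(c_1\cdots c_{2g}\,c_{2g+1}^{g+1})$; since $\varphi$ fixes $c_{2g+1}$, the trailing twist is unchanged and $W\cdot W\sim_C(\bar d_2\cdots \bar d_{2g})(c_1\cdots c_{2g})\,c_{2g+1}^{g+2}$. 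Relation (\ref{8}) then splits off the odd curves, $c_1c_2\cdots c_{2g}\sim(e_2e_4\cdots e_{2g})(c_1c_3\cdots c_{2g-1})$, and because the odd-indexed curves $c_1,c_3,\dots,c_{2g-1},c_{2g+1}$ are pairwise disjoint the tail regroups as $c_{2g+1}^{4}\cdot\bigl(c_{2g+1}^{g-2}c_1c_3\cdots c_{2g-1}\bigr)$. The bracketed factor is exactly the positive side $t_{\delta_0}^{g-2}t_{\delta_1}\cdots t_{\delta_g}$ of a type-$(g-1)$ daisy relator, so each pair yields one daisy relation together with a surplus $c_{2g+1}^{4}$.

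First I would make this global rather than local. Applying Lemma~\ref{lem3.4}(b) to a pair conjugates the material on one side by $\varphi^{-1}$ (with the accumulated $\varphi$-powers swept to the front by cyclic permutation), precisely as in the passages producing (\ref{7})--(\ref{11}) inside Theorems~\ref{4.1} and~\ref{4.2}. The decisive point is that $\varphi$ fixes every odd curve, $\varphi(c_{2i-1})=c_{2i-1}$ and $\varphi(c_{2g+1})=c_{2g+1}$; hence every conjugation ${}_{\varphi^{-k}}(\,\cdot\,)$ acts trivially on the daisy-defining curves and moves only the auxiliary twists $\bar d_i,e_i$ that lie between consecutive daisies. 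Consequently the $g+1$ exposed subwords $c_{2g+1}^{g-2}c_1c_3\cdots c_{2g-1}$ are honest (unconjugated) type-$(g-1)$ daisy relators, and the powers of $c_{2g+1}$ that each consumes are produced inside its own pair, so the $g+1$ daisies are pairwise disjoint. Processing the $g+1$ pairs successively (say from right to left, so that already-exposed daisies are never reconjugated) establishes (i).

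For (ii) I would merge adjacent pairs. Two consecutive pairs supply the odd factor $c_1c_3\cdots c_{2g-1}$ twice and a combined surplus $c_{2g+1}^{8}$; commuting the pairwise-disjoint odd twists together produces the block $c_1^2c_3^2\cdots c_{2g-1}^2\cdot c_{2g+1}^{2g+4}$, which is identical to the subword appearing in the relator of Theorem~\ref{4.7}. From it the positive side $c_{2g+1}^{2g-2}c_3^2c_5^2\cdots c_{2g-1}^2$ of the daisy relator $D_{2(g-1)}$ is read off exactly as there, the extra $c_1^2$ and the remaining powers of $c_{2g+1}$ being surplus. Grouping the $2g+2$ blocks into fours therefore yields $\lfloor(g+1)/2\rfloor$ type-$2(g-1)$ daisy relations, which is $(g+1)/2$ for odd $g$ and $g/2$ for even $g$, with one leftover pair in the even case, matching the stated count.

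The step I expect to be the main obstacle is the simultaneous bookkeeping of the propagated conjugations together with the certification of disjointness: I must check that the successive applications of Lemma~\ref{lem3.4}(b) thread through the entire relator without two daisies ever competing for the same $c_{2g+1}$-twists, and, for (ii), that the front auxiliary twists of the second pair can be commuted clear of the first pair's odd twists so that the squares $c_3^2\cdots c_{2g-1}^2$ genuinely assemble. Both reduce to the (already exploited) facts that the odd curves are mutually disjoint and fixed by $\varphi$, so I expect the argument to close, but the precise accounting of which surplus $c_{2g+1}$ belongs to which daisy is where the proof must be most careful.
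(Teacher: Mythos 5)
Your argument is correct and, for part (i), is essentially the paper's own proof: the authors also split $I(g)$ into $g+1$ pairs of ascending blocks, apply Lemma~\ref{lem3.4}(b) (their relation (\ref{12})) and then the even/odd splitting (\ref{13}) to expose $c_1c_3\cdots c_{2g-1}c_{2g+1}^{g+2}$ in each pair, with the accumulated $\varphi$-conjugations harmless because $\varphi$ fixes every odd curve --- exactly the bookkeeping point you emphasize. For part (ii) your route differs from the paper's. You merge the outputs of two applications of Lemma~\ref{lem3.4}(b), transporting one pair's block $c_1c_3\cdots c_{2g-1}c_{2g+1}^{g+2}$ past the next pair's auxiliary twists to assemble $c_1^2c_3^2\cdots c_{2g-1}^2c_{2g+1}^{2g+4}$; the paper instead treats four consecutive blocks as a unit, processing the first pair by Lemma~\ref{lem3.3}(b) (their (\ref{14}), which dumps the entire power $c_{2g+1}^{2g+2}$ at once) and the second pair by a separate identity (\ref{15}), $(c_1\cdots c_{2g+1})^2\sim(c_1^2c_3^2\cdots c_{2g+1}^2)(f_2f_4\cdots f_{2g})(\bar d_2\bar d_4\cdots\bar d_{2g})$, which produces the odd squares directly rather than by merging two single copies. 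Both land on the same subword and the same counts ($\lfloor(g+1)/2\rfloor$ groups of four blocks). One small correction to your justification of the merging step: the transport of $c_1c_3\cdots c_{2g-1}c_{2g+1}^{g+2}$ past $\bar d_2\cdots\bar d_{2g}e_2\cdots e_{2g}$ is not a genuine commutation (those curves intersect the odd ones); it is a sequence of elementary transformations that leaves the transported block intact while replacing the auxiliary twists by conjugates, which are still positive Dehn twists --- only after the two odd blocks are adjacent does mutual disjointness of $c_1,c_3,\dots,c_{2g+1}$ assemble the squares. With that reading your argument closes; the paper's choice of (\ref{15}) simply avoids this extra transport. (The paper additionally designates one of the $g+1$ substitutions in (i) to be $D_{g-1}'$ rather than $D_{g-1}$, but that is for the later minimality argument in Theorem~\ref{theorem1}, not for the count asserted here.)
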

\begin{proof}
Let us first prove (i). Since $\varphi(c_{2i-1}) = c_{2i-1}$ for each $i=1,\ldots,g+1$, by Lemma~\ref{lem3.4} (b) we have 
\begin{align}
&(c_1 c_2 \cdots c_{2g} c_{2g+1})^2 = (c_1 c_2 \cdots c_{2g} c_{2g+1}) \cdot (c_1 c_2 \cdots c_{2g}) \cdot c_{2g+1} \label{12} \\
&\sim_C (\bar{d}_2 \bar{d}_4 \cdots \bar{d}_{2g}) \cdot (c_1 c_2 \cdots c_{2g} \cdot c_{2g+1}^{g+1}) \cdot c_{2g+1}. \notag
\end{align}
Moreover,  we have 
\begin{align}
c_1 c_2 c_3 \cdots c_{2g} \sim (e_2 e_4 e_6 \cdots e_{2g}) \cdot (c_1 c_3 c_5 \cdots c_{2g-1}). \label{13}
\end{align}
Therefore, we have 
\begin{align*}
(c_1 c_2 \cdots c_{2g+1})^{2g+2} &\xrightarrow[]{(\ref{12})} (\bar{d}_2 \bar{d}_4 \cdots \bar{d}_{2g} \cdot c_1 c_2 \cdots c_{2g} \cdot c_{2g+1}^{g+2})^{g+1} \\
&\xrightarrow[]{(\ref{13})} (\bar{d}_2 \bar{d}_4 \cdots \bar{d}_{2g} \cdot e_2 e_4 \cdots e_{2g} \cdot c_1 c_3 \cdots c_{2g-1} \cdot c_{2g+1}^{g+2})^{g+1}.
\end{align*}
Then, we see that we can apply once $D_{g-1}^\prime$-substitution and $g$ times $D_{g-1}$-substitutions to $(\bar{d}_2 \bar{d}_4 \cdots \bar{d}_{2g} \cdot e_2 e_4 \cdots e_{2g} \cdot c_1 c_3 \cdots c_{2g-1} c_{2g+1}^{g+2})^{g+1}$. The reason that we apply once $D_{g-1}^\prime$-substitution is to construct a minimal symplectic manifold $Y(g,k)$ in Theorem~\ref{theorem1}(see the proof of Theorem~\ref{theorem1} and Remark 36). This completes the proof of (i).

Next we prove (ii). By Lemma~\ref{lem3.3} (b) we have 
\begin{align}
&(c_1 c_2 \cdots c_{2g} c_{2g+1})^2 = (c_1 c_2 \cdots c_{2g} c_{2g+1}) \cdot (c_1 c_2 \cdots c_{2g}) \cdot c_{2g+1} \label{14} \\
&\sim (d_1 d_2 \cdots d_{2g} \cdot c_{2g+1}^{2g+1}) \cdot c_{2g+1}. \notag
\end{align}
Moreover, we have 
\begin{align}
(c_1 c_2 \cdots c_{2g+1})^2 \sim (c_1^2 c_3^2 \cdots c_{2g+1}^2) \cdot (f_2 f_4 \cdots f_{2g}) \cdot (\bar{d}_2 \bar{d}_4 \cdots \bar{d}_{2g}). \label{15}
\end{align}
From the above relations, we have 
\begin{align*}
&(c_1 c_2 \cdots c_{2g} c_{2g+1})^4 \\
&\xrightarrow[]{(\ref{14})} (d_1 d_2 \cdots d_{2g} \cdot c_{2g+1}^{2g+2}) \cdot (c_1 c_2 \cdots c_{2g} c_{2g+1})^2 \\
&\xrightarrow[]{(\ref{15})} (d_1 d_2 \cdots d_{2g} \cdot c_{2g+1}^{2g+2}) \cdot (c_1^2 c_3^2 \cdots c_{2g+1}^2) \cdot (f_2 f_4 \cdots f_{2g}) \cdot (\bar{d}_2 \bar{d}_4 \cdots \bar{d}_{2g}) \\
&\sim (d_1 d_2 \cdots d_{2g}) \cdot (c_1^2 c_3^2 \cdots c_{2g-1}^2 c_{2g+1}^{2g+4}) \cdot (f_2 f_4 \cdots f_{2g}) \cdot (\bar{d}_2 \bar{d}_4 \cdots \bar{d}_{2g}). 
\end{align*}
From this, we see that $(c_1 c_2 \cdots c_{2g} c_{2g+1})^4$ can be conjugated to contain a daisy relations of type $2(g-1)$. 
Therefore, 
\begin{align*}
&I(g) = 
  \left\{ \begin{array}{ll}
      \displaystyle (c_1 c_2 \cdots c_{2g+1})^{4k} \cdot (c_1 c_2 \cdots c_{2g+1})^2 & \ \ (g=2k) \\
      \displaystyle (c_1 c_2 \cdots c_{2g+1})^{4(k+1)} & \ \ (g=2k+1), 
      \end{array} \right.
\end{align*}
gives the proof of (ii). 

\end{proof}

\begin{thm}\label{4.4} Let $g\geq 3$. Then, the monodromy of the Lefschetz fibration given by the word $G(g) = 1$ can be conjugated to contain $g$ daisy relations of type $2(g-1)$. 
\end{thm}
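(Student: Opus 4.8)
The plan is to follow the scheme of Theorem~\ref{4.3}(ii), with the top curve $c_{2g+1}$ replaced throughout by $c_{2g}$ and the odd-indexed curves by the even-indexed ones; this relabelling is forced because $G(g)$ is built from the chain $c_1c_2\cdots c_{2g}$ of \emph{even} length, so $c_{2g}$ plays the role of the curve carrying the large power. First I would record two expressions for the basic square $(c_1c_2\cdots c_{2g})^2$. Writing $(c_1c_2\cdots c_{2g})^2=(c_1c_2\cdots c_{2g})(c_1c_2\cdots c_{2g-1})\,c_{2g}$ and applying Lemma~\ref{lem3.3}(b) with $a_i=c_i$ and $k=2g$ gives
\begin{align*}
(c_1c_2\cdots c_{2g})^2 \sim d_1d_2\cdots d_{2g-1}\,c_{2g}^{2g+1},
\end{align*}
which isolates a large power of the central curve $c_{2g}$ exactly as relation~(\ref{14}) does for $I(g)$. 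By the same sequence of braid and commutation moves (Lemma~\ref{com&braid.lem}) and applications of~(\ref{1}) and~(\ref{2}) that produce relation~(\ref{15}), I would derive the companion expression
\begin{align*}
(c_1c_2\cdots c_{2g})^2 \sim (c_2^2c_4^2\cdots c_{2g}^2)\cdot P,
\end{align*}
where $P$ is a product of (conjugated) right-handed Dehn twists; note that here the squared curves are the \emph{even}-indexed ones $c_2,c_4,\ldots,c_{2g}$, again because the chain ends at $c_{2g}$.

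Next I would work on a single block $(c_1c_2\cdots c_{2g})^4=(c_1c_2\cdots c_{2g})^2(c_1c_2\cdots c_{2g})^2$, rewriting the first factor by the first expression and the second factor by the second. Since $c_{2g}$ commutes with each of $c_2,c_4,\ldots,c_{2g-2}$, the two powers of $c_{2g}$ slide together and merge, producing a contiguous subword
\begin{align*}
c_2^2c_4^2\cdots c_{2g-2}^2\,c_{2g}^{2g+3}.
\end{align*}
As $2g+3\geq 2g-2$, this contains $c_2^2c_4^2\cdots c_{2g-2}^2\,c_{2g}^{2g-2}$, which is precisely the product of boundary Dehn twists of a daisy relator of type $2(g-1)$ whose central curve is $c_{2g}$ and whose other boundary curves are the pairwise disjoint curves $c_2,c_4,\ldots,c_{2g-2}$; this is the even-indexed analogue of $D_{2(g-1)}$. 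Hence each block $(c_1c_2\cdots c_{2g})^4$ can be conjugated to contain one daisy relation of type $2(g-1)$.

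Finally, since $4g+2=4\cdot g+2$, I would split
\begin{align*}
G(g)=\bigl[(c_1c_2\cdots c_{2g})^4\bigr]^{g}\cdot (c_1c_2\cdots c_{2g})^2,
\end{align*}
and carry out the block manipulation on each of the $g$ identical length-four blocks, leaving the trailing factor $(c_1c_2\cdots c_{2g})^2$ alone, exactly as in the block decomposition used in Theorem~\ref{4.3}(ii). This exhibits $g$ daisy relations of type $2(g-1)$ inside a word equivalent to $G(g)$.

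The step I expect to be the main obstacle is establishing the companion relation yielding $c_2^2c_4^2\cdots c_{2g}^2$: unlike relation~(\ref{15}), the chain terminates at the even curve $c_{2g}$, so the parities of the conjugated twists appearing in $P$ are interchanged and one must keep careful track of the conjugating elements, in the spirit of the curves $f_j$ used for $I(g)$. A secondary point to check is that the $g$ substitutions on distinct blocks do not interfere; this holds because each daisy boundary word is produced and replaced by moves internal to its own block, just as in Theorem~\ref{4.3}.
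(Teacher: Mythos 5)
Your proposal is correct and follows the paper's proof essentially step for step: the same two Hurwitz-equivalent expressions for $(c_1\cdots c_{2g})^2$ (one isolating $c_{2g}^{2g+1}$ via Lemma~\ref{lem3.3}(b), one producing the squared even-indexed twists), the same merging of the $c_{2g}$-powers inside each block $(c_1\cdots c_{2g})^4$ to get $c_2^2c_4^2\cdots c_{2g-2}^2c_{2g}^{2g+3}$, and the same decomposition $G(g)=\bigl[(c_1\cdots c_{2g})^4\bigr]^{g}\cdot(c_1\cdots c_{2g})^2$. The only difference is cosmetic: where you identify an even-indexed daisy configuration with central curve $c_{2g}$ directly, the paper instead conjugates the whole word by the cyclic shift $h=c_1\cdots c_{2g+1}$ (which sends $c_i$ to $c_{i+1}$) so as to land on its previously defined odd-indexed relator $D_{2(g-1)}$; your configuration is exactly $h^{-1}(D_{2(g-1)})$, so the two are equivalent.
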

\begin{proof}
By a similar argument to the proof of Theorem~\ref{4.3}, we have
\begin{align*}
&(c_1 \cdots c_{2g-1} c_{2g})^4 \\
&\sim (d_1 d_2 \cdots d_{2g-1}) \cdot (c_2^2 c_4^2 \cdots c_{2g-2}^2 c_{2g}^{2g+3}) \cdot (f_1 f_3 \cdots f_{2g-1}) \cdot (\bar{d}_1 \bar{d}_3 \cdots \bar{d}_{2g-1}). 
\end{align*}
Let $h:=(a_1\cdots a_{2g} a_{2g+1})$. 
Note that $h(a_i)=a_{i+1}$ for $i=1,\ldots,2g$. 
Then, we have 
\begin{align*}
&(c_1 \cdots c_{2g-1} c_{2g})^4 \\
&\sim (d_1 d_2 \cdots d_{2g-1}) \cdot (c_2^2 c_4^2 \cdots c_{2g-2}^2 c_{2g}^{2g+3}) \cdot (f_1 f_3 \cdots f_{2g-1}) \cdot (\bar{d}_1 \bar{d}_3 \cdots \bar{d}_{2g-1}) \\
&\xrightarrow[]{h} (d_2 d_3 \cdots d_{2g}) \cdot (c_3^2 c_5^2 \cdots c_{2g-1}^2 c_{2g+1}^{2g+3}) \cdot (f_2 f_3 \cdots f_{2g}) \cdot (\bar{d}_2 \bar{d}_4 \cdots \bar{d}_{2g}). 
\end{align*}
From this, we see that $(c_1 c_2 \cdots c_{2g} c_{2g})^4$ can be conjugated to contain a daisy relations of type $2(g-1)$. 
$G(g)=(c_1 \cdots c_{2g-1} c_{2g})^{4g}\cdot(c_1 \cdots c_{2g-1} c_{2g})^2$ gives the proof. 
\end{proof}

\subsection{Non-hyperellipticity of our Lefschetz fibrations}

The purpose of this subsection is to prove that all the Lefschetz fibrations obtained in this paper via daisy substitutions are non-hyperelliptic. The proof will be obtained as a corollary of more general theorem given below  

\begin{thm}\label{nonhyperelliptic} Let $g\geq 3$. Let $f_{\varrho_1}:X_{\varrho_1} \rightarrow \mathbb{S}^2$ be a genus-$g$ hyperelliptic Lefschetz fibration with only non-separating vanishing cycyles, and let $\varrho_1$ 
be a positive relator corresponding to $f$. Let $k_1$ and $k_2$ be non-negative integers such that $k_1+k_2>0$, and let $k$ be a positive integer. 
Then we have the followings: 
\begin{enumerate}[label={\upshape(\roman*)}]
\setlength{\itemsep}{5pt}
\item If we can obtain a positive relator, denoted by $\varrho_2$, by applying $k_1$ times $D_{g-1}$-substitutions and $k_2$ times $D_{g-1}^\prime$-substitutions to $\varrho$, 
then the genus-$g$ Lefschetz fibration $f_{\varrho_2}:X_{\varrho_2} \rightarrow \mathbb{S}^2$ is non-hyperelliptic. 
\item If we can obtain a positive relator, denoted by $\varrho_3$, by applying $k$ times $D_{2(g-1)}$-substitutions to $\varrho$, 
then the genus-$g$ Lefschetz fibration $f_{\varrho_3}:X_{\varrho_3} \rightarrow \mathbb{S}^2$ is non-hyperelliptic. 
\end{enumerate}
\end{thm}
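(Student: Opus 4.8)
The plan is to obstruct hyperellipticity by confronting two independent computations of the signature of the total space: the value dictated by the rational-blowdown interpretation of a daisy substitution (Theorem~\ref{EN}) against the value predicted by the Matsumoto--Endo hyperelliptic signature formula (Theorem~\ref{sign}). I argue by contradiction, assuming in case~(i) that $f_{\varrho_2}$ is hyperelliptic, and in case~(ii) that $f_{\varrho_3}$ is.

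First I record the signature of the base fibration. Since $f_{\varrho_1}$ is hyperelliptic with only non-separating vanishing cycles, writing $N$ for their number, Theorem~\ref{sign} gives $\sigma(X_{\varrho_1}) = -\frac{g+1}{2g+1}N$. By Theorem~\ref{EN} a daisy substitution of type $p$ is a rational blowdown along a configuration $C_p$, hence raises the signature by $p-1$; at the same time it replaces the $2p$ twists $t_{\delta_0}^{p-1}t_{\delta_1}\cdots t_{\delta_{p+1}}$ by the $p+1$ twists $t_{x_1}\cdots t_{x_{p+1}}$, lowering the number of vanishing cycles by $p-1$. Let $T$ denote the total signature increase: $T = (k_1+k_2)(g-2)$ in case~(i), where $p=g-1$, and $T = k(2g-3)$ in case~(ii), where $p=2(g-1)$. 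In both cases the new relator has exactly $N-T$ vanishing cycles and total space of signature $\sigma = -\frac{g+1}{2g+1}N + T$, with $T>0$ because $g\geq 3$ and $k_1+k_2>0$ (resp.\ $k\geq 1$).

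Now suppose the new fibration is hyperelliptic; let $s_h'$ count its separating vanishing cycles splitting off a subsurface of genus $h$, and let $s_0'$ count the non-separating ones, so that $s_0'+\sum_h s_h' = N-T$. Substituting $s_0' = (N-T)-\sum_h s_h'$ into the formula of Theorem~\ref{sign} and equating the result with $\sigma = -\frac{g+1}{2g+1}N + T$ from the previous paragraph, the $N$-terms cancel and, after clearing the denominator $2g+1$, one is left with
\[
\sum_{h=1}^{[g/2]}\bigl(4h(g-h)-g\bigr)\,s_h' = g\,T .
\]
The geometric heart of the proof is to show this identity cannot hold: the substitution deletes only the non-separating curves $c_1,c_3,\dots,c_{2g-1},c_{2g+1}$ (resp.\ their analogues for $D_{g-1}^\prime$ and $D_{2(g-1)}$) and inserts the interior daisy curves $x_1,\dots,x_g$ (resp.\ $x_1^\prime,\dots,x_g^\prime$ and $y_1,\dots,y_{2g-1}$), while every surviving cycle of $\varrho_1$ is merely conjugated and so keeps its topological type. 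Thus I would verify directly from Figure~\ref{curves3} that each inserted curve $x_i$, $x_i^\prime$, $y_i$ is non-separating on $\Sigma_g$. Granting this, the new relator has only non-separating vanishing cycles, so $s_h'=0$ for every $h$ and the left-hand side of the displayed identity vanishes; since each coefficient satisfies $4h(g-h)-g\geq 3g-4>0$ and $gT>0$, this is a contradiction, establishing non-hyperellipticity in both cases.

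I expect the only genuine obstacle to be the geometric step, namely confirming that all of $x_i$, $x_i^\prime$, $y_i$ are non-separating simple closed curves; everything else is bookkeeping between the two signature formulas. The observation that conjugation preserves the separating/non-separating dichotomy is what lets me restrict attention to the finitely many inserted curves rather than to the whole of $\varrho_2$ (resp.\ $\varrho_3$). Note also that the general identity above does \emph{not} by itself rule out hyperellipticity when separating cycles are present, since its coefficients are positive; it is precisely the non-separating character of the inserted curves, read off from the figures, that forces the contradiction.
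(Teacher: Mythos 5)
Your overall strategy --- playing the signature jump from Theorem~\ref{EN} against the hyperelliptic signature formula of Theorem~\ref{sign} --- is exactly the paper's, and your reduction to the single identity $\sum_{h}\bigl(4h(g-h)-g\bigr)s_h' = gT$ is a clean way to package both cases at once. Part (i) goes through as you describe: the curves $x_1,\dots,x_g$ and $x_1',\dots,x_g'$ are indeed all non-separating, so $s_h'=0$ for all $h$ and the contradiction is immediate; this matches the paper's argument.

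The gap is in part (ii): the ``geometric step'' you defer to the figures, and which you identify as the only genuine obstacle, in fact fails there. The curve $y_1$ appearing in the daisy relator $D_{2(g-1)}$ is \emph{separating} --- it splits $\Sigma_g$ into two subsurfaces, one of genus $1$. (The paper states this explicitly in its proof, and it is also the reason the fibrations of Theorems~\ref{4.7} and~\ref{4.4} are non-spin.) So after $k$ substitutions of type $2(g-1)$ the new relator has $s_1'=k$ rather than $s_1'=0$, and your claim that the left-hand side of the identity vanishes is false in this case. Fortunately your identity still does the job once the correct data are inserted: with $s_1'=k$, $s_h'=0$ for $h\geq 2$, and $T=k(2g-3)$, it reads $(3g-4)k = g(2g-3)k$, i.e.\ $2(g-1)(g-2)k=0$, which is impossible for $g\geq 3$ and $k\geq 1$. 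This is precisely the computation the paper carries out (its discrepancy is $\frac{2(g-1)(g-2)}{2g+1}k>0$). So the argument is repairable with one line, but as written your proof of (ii) rests on a false geometric assertion, and the final step must be redone with the separating cycle $y_1$ accounted for.
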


\begin{cor}
All our Lefschetz fibrations are non-hyperelliptic. 
\end{cor}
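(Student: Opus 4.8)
The plan is to deduce the Corollary directly from Theorem~\ref{nonhyperelliptic}, so that the only work is to check that each Lefschetz fibration constructed in the paper meets the hypotheses of that theorem. First I would record how each of our fibrations arises: all of them are produced from one of the three standard relators $H(g)$, $I(g)$, $G(g)$ by applying a positive number of daisy substitutions of type $g-1$ or type $2(g-1)$. Concretely, $f_{H(g,1)}$ of Theorem~\ref{4.1} comes from one $D_{g-1}$-substitution applied to $H(g)$; $f_{H(g,2)}$ of Theorem~\ref{4.2} from two $D_{g-1}$-substitutions; the fibrations of Theorem~\ref{4.3}(i) from one $D_{g-1}^\prime$- and $g$ $D_{g-1}$-substitutions applied to $I(g)$; and the fibrations furnished by Theorems~\ref{4.7}, \ref{4.3}(ii) and \ref{4.4} from one or more $D_{2(g-1)}$-substitutions applied to $H(g)$, $I(g)$ and $G(g)$ respectively. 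In each case the number of substitutions performed is strictly positive.

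Next I would verify the standing hypothesis of Theorem~\ref{nonhyperelliptic}, namely that the starting relator $\varrho_1 \in \{H(g), I(g), G(g)\}$ defines a hyperelliptic fibration with only non-separating vanishing cycles. Hyperellipticity is classical, since $X(g)$, $Y(g)$, $Z(g)$ are the standard hyperelliptic families and each of $H(g)$, $I(g)$, $G(g)$ is by construction a word in the Dehn twists $c_1,\dots,c_{2g+1}$ about the curves of Figure~\ref{fig:hyper}, all of which commute with the hyperelliptic involution $\iota$. That every vanishing cycle is non-separating is visible directly from the same figure: each $c_i$ is a non-separating simple closed curve, so in the notation of Theorem~\ref{sign} one has $s_h = 0$ for every $h \geq 1$ in all three families.

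With these two points in hand the Corollary follows immediately: for the fibrations obtained by type-$(g-1)$ substitutions we apply Theorem~\ref{nonhyperelliptic}(i) with $k_1 + k_2 > 0$, and for those obtained by type-$2(g-1)$ substitutions we apply Theorem~\ref{nonhyperelliptic}(ii) with $k \geq 1$; in every instance the output is non-hyperellipticity. Since the fibrations catalogued in the first paragraph exhaust the Lefschetz fibrations we construct, this proves that all of them are non-hyperelliptic. The one thing requiring genuine care at this stage is completeness of that catalogue, together with the observation that the relators $H(g,1)$, $H(g,2)$, etc.\ really are obtained from $H(g)$, $I(g)$, $G(g)$ by the stated substitutions (and not merely Hurwitz-equivalent to something looser), which is exactly what Theorems~\ref{4.1}--\ref{4.4} assert.

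The conceptual weight of course sits inside Theorem~\ref{nonhyperelliptic}, which I am invoking as a black box, and this is where the real obstacle lies. The way I would prove that theorem is a signature comparison: on one hand Theorem~\ref{EN} computes $\sigma(X_{\varrho_2})$ (resp.\ $\sigma(X_{\varrho_3})$) directly from $\sigma(X_{\varrho_1})$ by adding $p-1$ for each type-$p$ substitution; on the other hand, if the new fibration were hyperelliptic its signature would be forced by the Matsumoto--Endo formula of Theorem~\ref{sign}, evaluated on the explicit counts $s_0, s_h$ of the vanishing cycles of $\varrho_2$. Since $X_{\varrho_1}$ has only non-separating cycles, the two expressions disagree whenever the new daisy cycles $x_i, x_i^\prime, y_i$ fail to conspire to reproduce the true signature; for instance, if these new cycles were all non-separating the discrepancy would be a nonzero multiple of $g(g-2)/(2g+1)$. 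The main obstacle is therefore the geometric step of classifying the new vanishing cycles from Figure~\ref{curves3} as separating or non-separating (and in the separating case recording the genus split), and then checking that the resulting discrepancy never vanishes for $g \geq 3$ — in particular ruling out the accidental genus splits for which the hyperelliptic formula would coincidentally match the signature computed via Theorem~\ref{EN}.
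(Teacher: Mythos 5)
Your proposal is correct and follows essentially the same route as the paper: the Corollary is deduced by applying Theorem~\ref{nonhyperelliptic} to each constructed family, and the theorem itself is proved exactly as you sketch, by comparing the Matsumoto--Endo signature formula (using the separating/non-separating classification of the new cycles $x_i$, $x_i^\prime$, $y_i$, with $y_1$ separating off a genus-$1$ piece) against the signature change $p-1$ per type-$p$ substitution from Theorem~\ref{EN}, and checking the discrepancy is nonzero for $g \geq 3$.
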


\begin{proof}[Proof of Theorem~\ref{nonhyperelliptic}]
Let $s_0$ be the number of non-separating vanishing cycles of $f_{\varrho_1}$. 
Note that by Theorem~\ref{sign}, we have $\sigma(X_{\varrho_1})=-\dfrac{g+1}{2g+1}s_0$. 

First, we assume that $f_{\varrho_2}$ is a hyperelliptic Lefschetz fibration. The relators $D_{g-1}$ and $D_{g-1}^\prime$ consist of only Dehn twists about non-separating simple closed curves $c_1,c_3,\ldots,c_{2g+1}$ and $x_1,x_1^\prime,\ldots,x_g,x_g^\prime$ as in Figure~\ref{curves3}. Therefore, we see that $\varrho_2$ consits of only right-handed Dehn twists about non-separating simple closed curves, so $f_{\varrho_2}$ has only non-separating vanishing cycles. 
In particular, the number of non-separating vanishing cycles of $f_{\varrho_2}$ is $s_0-\{(g-1)-1\}(k_1+k_2)$. By Theorem~\ref{sign}, we have 
\begin{align*}
\sigma(X_{\varrho_2})=-\frac{g+1}{2g+1}\{s_0-(g-2)(k_1+k_2)\}.
\end{align*}
On the other hand, since the relators $D_{g-1}$ and $D_{g-1}^\prime$ are daisy relators of type $g-1$, 
by Theorem~\ref{EN}, we have 
\begin{align*}
&\sigma(X_{\varrho_2}) = \sigma(X_{\varrho_1})+(g-2)(k_1+k_2) = -\frac{g+1}{2g+1}s_0+(g-2)(k_1+k_2).& 
\end{align*}
We get a contradiction since the above equality does not hold for $g \geq 3$ and $k_1+k_2 > 0$.  

Next, we assume that $f_{\varrho_3}$ is a hyperelliptic Lefschetz fibration. The relator $D_{2(g-1)}$ consits of Dehn twsts about non-separating simple closed curves $c_3,c_5,\ldots,c_{2g+1}$ and $y_2,y_3,\ldots,y_{2g-1}$ in Figure~\ref{curves3} and a Dehn twist about separating simple closed curve $y_1$ in Figure~\ref{curves3}. Note that $y_{g+1}=x_2, y_{g+2}=x_3, \ldots, y_{2g-1}=x_g$ and that $y_1$ separates $\Sigma_g$ into two surface, one of which has genus $1$. Therefore, $f_{\varrho_3}$ has $s_0-k\{2(g-1)\}$ non-separating vanishing cycles and $k$ separating vanishing cycles which are $y_1$. 
By Theorem~\ref{sign}, we have
\begin{align*}
\sigma(X_{\varrho_3})&=-\frac{g+1}{2g+1}\{s_0-2k(g-1)\}+\left(\frac{4(g-1)}{2g+1}-1\right)k \\ 
&=-\frac{g+1}{2g+1}s_0+\frac{2g^2+2g-7}{2g+1}k
\end{align*}
On the other hand, since the relator $D_{2(g-1)}$ is a daisy relator of type $2(g-1)$, 
by Theorem~\ref{EN}, we have 
\begin{align*}
\sigma(X_{\varrho_3}) = \sigma(X_{\varrho_1})+(2g-3)k = -\frac{g+1}{2g+1}s_0+(2g-3)k. 
\end{align*}
Since $g\geq 3$, we have 
\begin{align*}
&\left(-\frac{g+1}{2g+1}s_0+(2g-3)k\right)-\left(-\frac{g+1}{2g+1}s_0+\frac{2g^2+2g-7}{2g+1}k\right) \\
&=\frac{2(g-1)(g-2)}{2g+1}k>0. 
\end{align*}
This is a contradiction to the above equality.
\end{proof}

\section{Constructing exotic 4-manifolds}

The purpose of this section is to show that the symplectic $4$-manifolds obtained in Theorem \ref{4.3}, part (i), are irreducible. Moreover, by performing the knot surgery operation along a homologically essential torus on these symplectic $4$-manifolds, we obtain infinite families of mutually nondiffeomorphic irreducible smooth structures. 

\begin{thm}\label{theorem1} Let $g \geq 3$ and $M$ be one of the following $4$-manifolds $(g^2 - g + 1)\CP\# (3g^{2} + 3g + 3 -(g-2)k)\CPb$ for $k = 2, \cdots, g+1$. There exists an irreducible symplectic\/ $4$-manifold $Y(g,k)$ homeomorphic but not diffeomorphic to\/ $M$ that can be obtained from the genus $g$ Lefschetz fibration on $Y(g)$ over $\mathbb{S}^2$ with the monodromy $(c_1c_2 \cdots c_{2g+1})^{2g+2} = 1$ in the mapping class group $\Gamma_g$ by applying $k$ daisy substitutions of type $g-1$. 
\end{thm}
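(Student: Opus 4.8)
The plan is to realize $Y(g,k)$ as an iterated symplectic rational blowdown of $Y(g)$, pin down its homeomorphism type by Freedman's theorem, and then separate it from $M$ by a Seiberg--Witten computation combined with a minimality argument.

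First I would fix the positive relator. By Theorem~\ref{4.3}(i) the monodromy $I(g)=(c_1c_2\cdots c_{2g+1})^{2g+2}$ can be conjugated so as to display one $D_{g-1}^\prime$-relator together with $g$ copies of the $D_{g-1}$-relator; performing $k$ of these daisy substitutions of type $p=g-1$ (for $2\le k\le g+1$) yields a positive relator whose total space I call $Y(g,k)$. By Theorem~\ref{EN} each substitution is a rational blowdown of $Y(g)$ along a copy of $C_{g-1}$, so
\[
e(Y(g,k))=e(Y(g))-k(g-2),\qquad \sigma(Y(g,k))=\sigma(Y(g))+k(g-2).
\]
Feeding in $e(Y(g))=4g^2+2g+6$ and $\sigma(Y(g))=-2g^2-4g-2$ from Lemmas~\ref{E}--\ref{E1} and Example~\ref{Ex}, together with the invariance of $b_2^+$ under rational blowdown (Lemma~\ref{thm:rb}), I get $b_2^+(Y(g,k))=g^2-g+1$ and $b_2^-(Y(g,k))=3g^2+3g+3-(g-2)k$, exactly those of $M$. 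Since $D_{g-1}$ and $D_{g-1}^\prime$ involve only Dehn twists about nonseparating curves, every vanishing cycle is nonseparating and they still normally generate $\pi_1(\Sigma_g)$, so $Y(g,k)$ is simply connected (equivalently $Y(g)$ and $Y(g)\setminus C_{g-1}$ are simply connected and the rational-blowdown criterion applies). Exhibiting a collection of vanishing cycles whose sum is again a vanishing cycle, Stipsicz's Theorems~\ref{Spin1}--\ref{Spin2} show $Y(g,k)$ is non-spin, and Freedman's theorem then gives a homeomorphism $Y(g,k)\cong M$.

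Next I would establish symplecticity and compute the Seiberg--Witten invariants. As $Y(g)$ is a Kähler Lefschetz fibration and the daisy substitution is realized by a \emph{symplectic} rational blowdown, $Y(g,k)$ is symplectic. For the invariants I start from $Y(g)=W(g)\#2\CPb$, where $W(g)$ is the minimal general-type surface of Example~\ref{Ex} with $K_{W(g)}=(g-2)(h+h^\prime)$; by Taubes and the blow-up formula the basic classes of $Y(g)$ are $\pm K_{W(g)}\pm E_1\pm E_2$. I then locate inside $Y(g)$ the linear chains $C_{g-1}$ produced by the daisy substitutions—assembled from the exceptional spheres and the $-2$-spheres of the Milnor-fiber plumbing of Example~\ref{Ex}—and verify the intersection conditions $L\cdot u_i=0$ for $1\le i\le g-3$ and $L\cdot u_{g-2}=\pm(g-1)$. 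Applying the rational-blowdown formula (Theorem~\ref{SW1}) once per substitution pushes these classes down to nonzero basic classes of $Y(g,k)$, with $\pm K_{Y(g,k)}$ among them.

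The main obstacle is the final step: proving that $Y(g,k)$ is minimal, whence irreducibility follows from the theorem that a minimal symplectic $4$-manifold with $b_2^+>1$ is irreducible. Here I would argue that the two exceptional spheres $E_1,E_2$ of $Y(g)=W(g)\#2\CPb$ are absorbed by the blown-down configurations, and then rule out any surviving symplectic $(-1)$-sphere $E$ using the basic-class set found above: such an $E$ would force the basic classes to occur in $\pm E$-symmetric pairs $L,\,L\pm 2\,\mathrm{PD}[E]$ (blow-up formula), which is incompatible with the explicit finite list we computed. Once minimality is in hand, Taubes' theorem gives $SW_{Y(g,k)}(\pm K)=\pm1\neq 0$, while $M=(g^2-g+1)\CP\#(3g^2+3g+3-(g-2)k)\CPb$ splits as a connected sum of pieces each with $b_2^+>0$ and therefore has vanishing Seiberg--Witten invariants. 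Hence $Y(g,k)$ and $M$ are homeomorphic but not diffeomorphic, and $Y(g,k)$ is the desired irreducible symplectic exotic copy of $M$, completing the proof.
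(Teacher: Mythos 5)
Your proposal follows essentially the same route as the paper's proof: rational blowdown via Theorem~\ref{EN} with the invariants from Lemma~\ref{thm:rb}, non-spinness via Stipsicz's criterion feeding into Freedman, descent of the basic classes $\pm K_{W(g)}\pm E_1\pm E_2$ through Theorems~\ref{SW1}--\ref{SW2} using the intersection pattern of the $C_{g-1}$ configurations with $E_1$ and $E_2$, and minimality from the resulting single pair of basic classes (your $(-1)$-sphere pairing argument is the same as the paper's ``no two basic classes with $(K-K')^2=-4$'' criterion), followed by Hamilton--Kotschick for irreducibility. The only items left implicit in your write-up that the paper makes explicit are the specific vanishing cycles realizing Stipsicz's condition ($\bar d_{2g}+e_{2g}=x_g$ in $H_1(\Sigma_g;\Z_2)$) and the explicit intersection numbers $S_1^j\cdot E_i$ showing that only $\pm(K_{W(g)}+E_1+E_2)$ descend.
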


\begin{proof} Let $Y(g,k)$ denote the symplectic $4$-manifold obtained from $Y(g) = W(g)\#2\,\CPb$ by applying $k-1$ $D_{g-1}$-subsitutions and one $D_{g-1}^\prime$-substitution as in Theorem~\ref{4.3}. Applying Lemma~\ref{thm:rb}, we compute the topological invariants of $Y(g,k)$ 
 
\begin{eqnarray*}
e(Y(g,k)) &=& e(W(g)\#2\CPb) - k(g-2) =  2(2g^2 + g + 3) - k(g-2),\\
\sigma(Y(g,k)) &=& \sigma (W(g)\#2\CPb) + k(g-2) =  -2(g+1)^2 + k(g-2).
\end{eqnarray*}

Using the factorization of the global monodromy in terms of right-handed Dehn twists of the genus $g$ Lefschetz on $Y(g,k)$ (see Theorem~\ref{4.3}, part (i)), it is easy to check that $\pi_1(Y(g,k))=1$. 

Next, we show that $Y(g,k)$ is non-spin $4$-manifold. Let $c_{1}, \cdots, c_{2g+1}$ be the curves in Figure~\ref{fig:hyper}, and $x_{1}, \cdots, x_{g}, x'_{1}, \cdots, x'_{g}$ be the curves in Figure~\ref{curves3}. Note that  $\bar{d}_i = {}_{c_{i+1}^{-1}}(c_i)$ and  $e_{i+1} = {}_{c_{i}}(c_{i+1})$. The vanishing cycles of Lefschetz fibrations in Theorem~\ref{4.3} includes the curves  $\bar{d}_{2i}$, $e_{2i}$ for $i = 1, \cdots, g$, and $x_j$, $x'_{j}$ for $j = 1, \cdots, g$, and $c_{2g+1}$. In $H_{1}(\Sigma_{g}; \mathbb{Z}_{2})$, we find that $\bar{d}_{2g} =c_{2g+1} + c_{2g}$, $e_{2g} = c_{2g-1} + c_{2g}$ and  $x_{g} = c_{2g-1} + c_{2g+1}$.Therefore, we have $\bar{d}_{2g} + e_{2g} = x_{g}$. In the notation of Theorem~\ref{Spin1}, we have $l=2$ and $\bar{d}_{2g} \cdot e_{2g} = 0$. Therefore, $2+ \bar{d}_{2g} \cdot e_{2g} \equiv 0 (\textrm{mod}\ 2)$.

By Theorem~\ref{Spin2}, $Y(g,k)$ is non-spin, and thus have an odd intersection form. By Freedman's theorem (cf.\ \cite{freedman}), we see that  $Y(g,k)$ is homeomorphic to $(g^2 - g + 1)\CP\# (3g^{2} + 3g + 3 -(g-2)k)\CPb$.

Next, using the fact that $W(g)$ is a minimal complex surface of general type with ${b_{2}}^{+} > 1$ and the blow up formula for the Seiberg-Witten function \cite{FS2}, we compute $SW_{W(g)\#2\,\CPb}$ $= SW_{W(g)} \cdot \prod_{j=1}^{2}(e^{E_{i}} + e^{-E_{i}}) = (e^{K_{W(g)}} + e^{-K_{W(g)}})(e^{E_{1}} + e^{-E_{1}})(e^{E_{2}} + e^{-E_{2}})$, where $E_{i}$ denote the exceptional class of the $i^{th}$ blow-up. By the above formula, the SW basic classes of $W(g)\#2\,\CPb$ are given by $\pm K_{W(g)} \pm E_{1} \pm E_{2}$, and the values of the Seiberg-Witten invariants on these classes are $\pm 1$. Notice that by the Corollary 8.6 in~\cite{FS1}, $Y(g,k)$ has Seiberg-Witten simple type. Furthermore, by applying Theorem~\ref{SW1} and Theorem~\ref{SW2}, we completely determine the Seiberg-Witten invariants of $Y(g,k)$ using the basic classes and invariants of $W(g)\#2\,\CPb$: Up to sign the symplectic manifold $Y(g,k)$ has only one basic classes which descends from the $\pm$ canonical class of $Y(g)$ (see a detailed explanation below). By Theorem ~\ref{SW2}, or Taubes theorem \cite{taubes} the value of the Seiberg-Witten function on these basic classes, $\pm K_{Y(g,k)}$, are ${ \pm 1}$. 

In what follows, we spell out the details of the above discussion. By Theorem~\ref{SW1} and Theorem~\ref{SW2}, we can determine the Seiberg-Witten invariants of $Y(g,k)$ by computing the algebraic intersection number of the basic classes $\pm  K_{W(g)} \pm E_{1} \pm E_{2}$ of $W(g)\#2\,\CPb$, with the classes of spheres of $k$ disjoint $C_{g-1}$ configurations in $Y(g)$. Notice that the leading spheres of the configurations $C_{g-1}$ are the components of the singular fibers of $Y(g)$. By looking the regions on the genus $g$ surface, where the rational blowdowns along $C_{g-1}$ are performed, and the location of the base points of the genus $g$ pencil, we compute the algebraic intersection numbers as follows: Let $S_1^j$ denote the homology class of $-(g+1)$ sphere of the $j$-th configurations $C_{g-1}$ and $S_2^j, \cdots, S_{g-2}^j$ are the homology classes of $-2$ spheres of $C_{g-1}$ in $W(g)\#2\,\CPb$, where $j = 1, 2$. These two rational blowdowns along $C_{g-1}$ chosen such that they correspond to one $D_{g-1}$-subsitutions and one $D_{g-1}^\prime$-substitution as in Theorem~\ref{4.3}, part (i).
 
We have $S_1^1 \cdot E_{1} = 1$, $S_1^1 \cdot E_{2} = 0$, $S_1^2 \cdot E_{2} = 1$, $S_1^2 \cdot E_{1} = 0$, $S_1^j \cdot K_{W{g}} = g-2$, and the canonical divisor does not intersect with $S_i^j$ for $2 \leq i \leq g-1$. Consequently, $S_1^j \cdot \pm (K_{W{g}} + E_{1} + E_{2}) = \pm (g-1)$ for  $j = 1, 2$, and $S_1^j \cdot (\pm K_{W{g}} \pm E_{1} \mp E_{2}) \neq \pm (g-1)$ for one $j$. Observe that among the eight basic classes $\pm K_{W{g}} \pm E_{1} \pm E_{2}$, only $K_{W{g}} + E_{1} + E_{2}$ and $-(K_{W{g}} + E_{1} + E_{2})$ have algebraic intersection $\pm (g-1)$ with $-(g+1)$ spheres of $C_{g-1}$. Thus, Theorem~\ref{SW1} implies that these are only two basic classes that descend to $Y(g,2)$, and consequently to $Y(g,k)$ from $W(g)\#2\,\CPb$.

By invoking the connected sum theorem for Seiberg-Witten invariants, we see that $SW$ function is trivial for $(g^2 - g + 1)\CP\# (3g^{2} + 3g + 3 -(g-2)k)\CPb$. Since the Seiberg-Witten invariants are diffeomorphism invariants, $Y(g,k)$ is not diffeomorphic to $(g^2 - g + 1)\CP\# (3g^{2} + 3g + 3 -(g-2)k)\CPb$. 

The minimality of $Y(g,k)$ is a consequence of the fact that $Y(g,k)$ has no two Seiberg-Witten basic classes $K$ and $K'$ such that $(K - K')^2 = -4$. Notice that $\pm K_{Y(g,k)}$ are only basic classes of $Y(g,k)$, and $(K_{Y(g,k)}^2 - (-K_{Y(g,k)}))^2$ = $4(K_{Y(g,k)}^{2}) \geq 0$. Thus, we conlude that $Y(g,k)$ is symplectically minimal. Furthermore, since symplectic minimality implies irreducibility for simply-connected 4-manifolds \cite{HK}, we deduce that $Y(g,k)$ is also smoothly irreducible.

\end{proof}

The analogus theorem for $g = 2$, using lantern substitution, was proved in \cite{AP}.

\begin{thm}\label{theorem3} There exist an infinite family of irreducible symplectic\/ and an infinite family of irreducible non-symplectic\/ pairwise non-diffeomorphic $4$-manifolds all homeomorphic to $Y(g,k)$.
\end{thm}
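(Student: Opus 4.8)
The plan is to produce both families by Fintushel--Stern knot surgery (Theorem~\ref{thm:knotsurgery}) on a single torus inside $Y(g,k)$, and to separate the symplectic from the non-symplectic examples via the monicity criterion for the Alexander polynomial. The first step is to locate a homologically essential, symplectically embedded torus $T \subset Y(g,k)$ with $T^2 = 0$ that lies in a cusp neighborhood and satisfies $\pi_1(Y(g,k)\setminus T)=1$. I would find $T$ already in $Y(g)=W(g)\#2\CPb$: each building block $\CP\#(g^2+2g+1)\CPb=\CP\#(g+1)^2\CPb$ of the symplectic sum $W(g)$ (Lemma~\ref{E1}, Example~\ref{Ex}) contains, as a smooth summand, a copy of the rational elliptic surface $E(1)=\CP\#9\CPb$, hence a Gompf nucleus $N(1)$ whose cusp fiber is a square-zero torus with simply connected complement. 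Performing the symplectic sum and the blow-ups away from this nucleus keeps $T$ square zero and its complement simply connected (using $\pi_1(W(g))=1$). The key geometric point, to be extracted from Example~\ref{Ex}, is that $T$ can be chosen disjoint from the $k$ disjoint $C_{g-1}$ configurations along which the daisy substitutions (rational blowdowns) of Theorem~\ref{4.3}(i) are carried out; then $T$ descends unchanged to $Y(g,k)$, still lying in a cusp neighborhood, and $\pi_1(Y(g,k)\setminus T)=1$ follows from $\pi_1(Y(g,k))=1$ (Theorem~\ref{theorem1}) together with the fact that the meridian of $T$ is killed inside the cusp.

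Granting $T$, I would apply knot surgery along it for an infinite family of knots $K$. By Theorem~\ref{thm:knotsurgery} each $Y(g,k)_K$ is homeomorphic to $Y(g,k)$, hence to the manifold $M$ of Theorem~\ref{theorem1}, with
\begin{align*}
SW_{Y(g,k)_K}=SW_{Y(g,k)}\cdot \Delta_K(t^2),
\end{align*}
where $t=\exp(2[T])$. Since $SW_{Y(g,k)}=\pm(e^{K_{Y(g,k)}}+e^{-K_{Y(g,k)}})$ is nontrivial with exactly two basic classes (Theorem~\ref{theorem1}), the basic classes of $Y(g,k)_K$ are exactly $\pm K_{Y(g,k)}+2j[T]$ for $j$ ranging over the exponents appearing in $\Delta_K$. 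Choosing knots whose Alexander polynomials have strictly increasing breadth forces the number of Seiberg--Witten basic classes to strictly increase, so the resulting manifolds are pairwise non-diffeomorphic.

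To obtain the two families I would specialize $K$. Taking $K$ to be a fibered knot with monic Alexander polynomial, for instance the torus knot $T(2,2n+1)$, the last sentence of Theorem~\ref{thm:knotsurgery} makes $Y(g,k)_K$ symplectic; letting $n$ vary yields an infinite family of pairwise non-diffeomorphic symplectic $4$-manifolds homeomorphic to $Y(g,k)$. Taking instead an infinite family of knots with non-monic Alexander polynomials of increasing breadth, the same theorem shows that the corresponding $Y(g,k)_K$ admit no symplectic structure, and they are again pairwise non-diffeomorphic by the $SW$ computation; this is the non-symplectic family. Finally, irreducibility follows as in Theorem~\ref{theorem1}: since $[T]^2=0$ and $[T]\cdot K_{Y(g,k)}=0$, no two basic classes $L,L'$ of $Y(g,k)_K$ satisfy $(L-L')^2=-4$, so $Y(g,k)_K$ contains no $(-1)$-sphere and is minimal. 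For the symplectic members minimality gives irreducibility by \cite{HK}; for the non-symplectic members, minimality (no $\CPb$ summand) together with nontriviality of $SW$ (which forbids a splitting with both summands having $b_2^+>0$) and Donaldson's theorem yields irreducibility of these simply connected manifolds.

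The hard part will be the first paragraph, namely rigorously exhibiting the cusp-neighborhood torus $T$ with simply connected complement, arranging it to be symplectic for the symplectic family, and, crucially, checking that it can be taken disjoint from all of the $C_{g-1}$ configurations so that it survives the daisy substitutions into $Y(g,k)$. Once $T$ is in hand, the remaining steps are routine applications of the knot-surgery machinery and the Seiberg--Witten computations already set up for Theorem~\ref{theorem1}.
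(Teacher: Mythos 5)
Your overall strategy (knot surgery on a square-zero torus with simply connected complement, separating the two families by monicity of the Alexander polynomial, distinguishing members by the number of Seiberg--Witten basic classes) is the same as the paper's, and your second through fourth paragraphs are fine. The gap is exactly in the step you flag as ``the hard part,'' and it is not merely hard --- as proposed, it fails. You want to locate $T$ as the cusp fiber of a nucleus $N(1)\subset E(1)$ sitting inside one building block $\CP\#(g+1)^2\CPb$ of the symplectic sum $W(g)$, and then perform the sum ``away from this nucleus.'' But the sum is performed along the fiber class $f=(g+1)h-e_1-\cdots-e_{(g+1)^2}$, and the cusp torus of the standard nucleus represents $3h-e_1-\cdots-e_9$, which has algebraic intersection $3(g+1)-9=3g-6>0$ with $f$ for $g\geq 3$; it cannot be isotoped off the summing surface. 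Worse, no replacement exists inside a single block: since $f^2=0$ and $b_2^+\bigl(\CP\#(g+1)^2\CPb\bigr)=1$, the sublattice $f^\perp/\langle f\rangle$ is negative definite, so the only square-zero classes supported in the complement of $f$ are multiples of $f$ itself. Hence there is no homologically essential square-zero torus (cusp fiber or otherwise) in a building block that survives the fiber sum, and your candidate $T$ does not exist in $W(g)$, let alone in $Y(g,k)$. A secondary issue is that even if such a smooth nucleus existed, its cusp fiber would carry no symplectic or Lagrangian structure compatible with the symplectic form on $W(g)$, so the symplectic half of the family would still be out of reach.

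The paper's proof takes the torus from the other natural source, which is already catalogued in Example~\ref{Ex}: the $g(g-1)$ rim tori created by the generalized fiber sum. These are Lagrangian, homologically essential of square zero, disjoint from the genus $g$ fiber (hence from the singular fibers and from all the $C_{g-1}$ configurations, so they descend intact to $Y(g,k)$), and each has a dual $-2$-sphere, which by Van Kampen gives $\pi_1(Y(g,k)\setminus T)=1$. Perturbing the symplectic form makes one such rim torus symplectic, after which your paragraphs two through four go through verbatim. If you replace your first paragraph with this rim-torus argument, the proof is complete and coincides with the paper's.
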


\begin{proof} $Y(g,k)$\/ contains $g(g-1)$ Lagrangian tori which are disjoint from the singular fibers of genus $g$ Lefschetz fibration on $Y(g,k)$\/. These tori descend from $W(g)$ (See Example~\ref{Ex}), and survive in $Y(g,k)$ after the rational blowdowns along $C_{g-1}$. These tori are Lagrangian, but we can perturb the symplectic form on $Y(g,k)$ so that one of these tori, say $T$ becomes symplectic. Moreover, $\pi_1(Y(g,k) \setminus T) = 1$, which follows from the Van Kampen's Theorem using the facts that $\pi_1(Y(g,k)) = 1$ and any rim torus has a dual $-2$ sphere (see Proposition 1.2 in \cite{Halic}, or Gompf \cite{gompf}, page 564). Hence, we have a symplectic torus $T$ in $Y(g,k)$\/ of self-intersection $0$ such that $\pi_1(Y(g,k) \setminus T) = 1$. By performing a knot surgery on $T$, inside $Y(g,k)$, we obtain an irreducible $4$-manifold ${Y(g,k)}_K$ that is homeomorphic to $Y(g,k)$. By varying our choice of the knot $K$, we can realize infinitely many pairwise non-diffeomorphic $4$-manifolds, either symplectic or nonsymplectic.\end{proof}

\begin{rmk} We obtain the analogous results as in Theorem~\ref{theorem1} for the genus $g$ Lefschetz fibrations obtained in Theorem~\ref{4.4}. The total space $Z(g)$ of the Lefschetz fibration with the monodromy $(c_1c_2 \cdots c_{2g+1})^{2(2g+1)} = 1$ in the mapping class group $\Gamma_g$ are complex surface of general type with ${b_{2}}^{+} > 1$  and the single blow-up of a minimal complex surface (see \cite{GS}, Section 8.4, p.320-22). The computation of the Seiberg-Witten invariants follows the same lines of argument as that of Theorems \ref{theorem1}. In fact, the SW computation is simpler, since $Z(g)$ only admits one pair of basic classes. Also, the results of our paper can be easily extendable to the Lefschetz fibrations with monodromies $(c_1c_2 \cdots c_{2g-1}c_{2g}{c_{2g+1}}^2c_{2g}c_{2g-1} \cdots c_2c_1)^{2n} = 1$, $(c_1c_2 \cdots c_{2g}c_{2g+1})^{(2g+2)n} = 1$, $(c_1c_2 \cdots c_{2g-1}c_{2g})^{2(2g+1)n} = 1$ for $n \geq 2$. Since the computations are lengthy, we will not present it here. 
\end{rmk}

\begin{rmk} Note that all the Lefschetz fibrations constructed in our paper are non-spin. The fibrations obtained in Theorems~\ref{4.1} and \ref{4.2} admit a section of self-intersection $-1$. The fibrations in Theorem~\ref{4.7} and \ref{4.4} contain separating vanishing cycles (the curve $y_1$ in Figure \ref{curves3} is separating). Thus, the total spaces are non-spin. In general, if we apply the daisy substitution of type $2(g-1)$ to a positive relator in $\Gamma_g$, then the resulting Lefschetz fibration always contains separating vanishing cycles. The fibrations in Theorem~\ref{4.3} do not contain any separating vanishing cycles, but they are non-spin due to Stipsicz's criteria (see proof of Theorem~\ref{theorem1}). 
\end{rmk}

\begin{rmk} It would be interesting to know if the analogue of Theorem~\ref{theorem1} holds for Lefschetz fibrations of Theorem~\ref{4.2}. Their corresponding monodromies are obtained by applying twice $D_{g-1}$-substitutions to $H(g)$. In the opposite direction, we can prove that the total spaces of the Lefschetz fibrations of Theorem~\ref{4.1}, whose monodromies obtained by applying one $D_{g-1}$-substitutions to $H(g)$, are blow-ups of the complex projective plane. Notice that by Theorem~\ref{4.1} they admit at least $2g+6$ sphere sections of self-intersection $-1$. By using the result of Y. Sato \cite{Sato} (see Theorem 1.2, page 194), we see that the total spaces of these Lefschetz fibrations are diffemorphic to $\CP\#(3g+5)\CPb$. 
\end{rmk}

\begin{rmk}
In the proof of Theorem~\ref{4.3}, part (i), if we apply $k$ $D_{g-1}$-substitutions for $k=1,\ldots, g+1$ without applying an $D_{g-1}^\prime$-substitution, then the Lefschetz fibrations over $\mathbb{S}^2$ given by the resulting relations admits a section of self-intersection $-1$ (i.e. the total spaces of the fibrations are non-minimal). 
\end{rmk}

\section*{Acknowledgments} The authors are grateful to the referee for valuable comments and suggestions. A. A. was partially supported by NSF grants FRG-1065955, DMS-1005741 and Sloan Fellowship. N. M. was partially supported by Grant-in-Aid for Young Scientists (B) (No. 13276356), Japan Society for the Promotion of Science.

\end{document}